\newtheorem*{theorem*}{Theorem}
\newtheorem{theorem}{Theorem}[section]
\newtheorem{definition}[theorem]{Definition}
\newtheorem{proposition}[theorem]{Proposition}
\newtheorem{lemma}[theorem]{Lemma}
\newtheorem{corollary}[theorem]{Corollary}
\newtheorem*{conjecture*}{Conjecture}
\title{Partial Regularity and Blowup for an Averaged Three-Dimensional Navier-Stokes Equation}
\author{Matei P. Coiculescu}
\begin{document}
\maketitle
\begin{abstract}
We prove two results that together strongly suggest that obtaining a positive answer to the Navier-Stokes global regularity question requires more than a refinement of partial regularity theory. First we prove that there exists a class of bilinear operators $\mathfrak{B}$, which contains the Euler bilinear operator $\mathcal{E}(u,v):=\frac{1}{2}\mathbb{P}(u\cdot\nabla v + v\cdot\nabla u)$, such that for any $B\in \mathfrak{B}$, $n\geq3$, $\alpha \in ((n+1)/4, (n+2)/4)$, and smooth solution $u$ of the pseudodifferential equation 
$\partial_t u +(-\Delta)^\alpha u +B(u,u)=0$ on $\mathbb{R}^n\times [0,T)$, we have that $u$ is also smooth at time $T$ away from a closed set of Hausdorff dimension at most $n+2-4\alpha$. Next we prove that, for the Euclidean space $\mathbb{R}^3$, there exists an operator $C(u,v)\in \mathfrak{B}$ that is an averaged version of $\mathcal{E}$, that formally allows the dissipation of energy by the "cancellation identity" $\langle C(u,u), u\rangle =0$, and whose corresponding pseudodifferential equation $\partial_t u +(-\Delta)^\alpha u +C(u,u)=0$ admits a solution that blows up in finite time for all $\alpha \in (0,5/4)$.
\end{abstract}

\section{Introduction}
The incompressible Navier-Stokes equations are a system of nonlinear partial differential equations that model the motion of an incompressible viscous fluid. In particular, if $u(x,t)$ is the velocity field of a fluid in $\mathbb{R}^3\times [0,T)$ dynamically changing in time $t$, with initial vector field $u_0$, external force $f$ and fluid pressure $p$, then the Navier-Stokes equations state that $u$ satisfies:
$$
\begin{gathered}
\partial_t u -\nu\Delta u+(u\cdot\nabla)u+\nabla p=f\\
\textrm{div } u=0\\
u(t=0)=u_0
\end{gathered}
$$
The parameter $\nu$ determines the viscosity of the fluid. Since its value has no influence on our work, we may assume that the viscosity $\nu=1$. We also consider the Navier-Stokes equations with "fractional dissipation". In particular let $\alpha\geq 0$ and consider the pseudodifferential operator $(-\Delta)^\alpha$ whose Fourier symbol is $\abs{\xi}^{2\alpha}$. Whenever $\alpha$ equals a positive integer $k$, the operator coincides, up to a sign, with the composition of the classical Laplacian $(-\Delta)$ with itself $k$ times. The unforced $\alpha$-dissipative Navier-Stokes system in $\mathbb{R}^3$ is the following system of pseudodifferential equations:
\begin{equation}
\label{FRACNS}
\begin{gathered}
\partial_t u+(-\Delta)^\alpha u+ (u\cdot \nabla)u +\nabla p = 0 \\
\textrm{div } u=0 \\
u(t=0)= u_0
\end{gathered}
\end{equation}
The $\alpha$-dissipative Navier-Stokes system is typically called hyperdissipative if $\alpha>1$ and hypodissipative if $\alpha<1$. The case when $\alpha=1$ coincides with the classical Navier-Stokes equations. For $\alpha\geq 5/4$ and for any smooth function $u_0$ which decays sufficiently fast at infinity, it is known that the system in Equation (\ref{FRACNS}) has a classical global-in-time solution. In the case of Equation (\ref{FRACNS}) on $\mathbb{R}^n$, classical global-in-time solutions are guaranteed whenever $\alpha >(n+2)/4$ and the initial data is smooth and has sufficient decay. We offer a proof of the latter fact in Section 5.6. Global regularity in $\mathbb{R}^3$ is also known for an open set of initial data if $\alpha$ is slightly less than $5/4$, see \cite{CH}. Generally, when $\alpha<5/4$, the global existence of classical solutions is still a major open question \cite{F}.

One approach towards proving global regularity is to show that the set where singularities of the vector field develop is small in some sense. In particular, one might want to find a bound on the Hausdorff dimension of such a set. Finding such bounds is within the scope of partial regularity theory. We now provide a short summary of the partial regularity theory for the classical Navier-Stokes equations. 

Leray showed in \cite{L} that the set of singular times has $1/2$-dimensional Hausdorff measure equal to zero. Scheffer wrote several papers, beginning with \cite{S}, bounding the Hausdorff dimension of the singular set in space at the the time of first blowup. The best known partial regularity result was obtained by Caffarelli, Kohn, and Nirenberg in \cite{CKN} for a particular class of weak solutions called suitable weak solutions. Let $u$ be a suitable weak solution on any open subset of spacetime $\mathbb{R}^3\times\mathbb{R}$. If we define the singular set (in space-time) as
$$S= \{ (x,t) : u \not\in L^\infty \textrm{ in any neighborhood of } (x,t)\}$$
then the $1$-dimensional Hausdorff measure of $S$ is equal to zero. Away from the set $S$, the vector field $u$ is bounded, which  implies that $u$ is smooth in the spatial variable by the Serrin higher regularity theorem, see \cite{SE} for more details. The bound on the Hausdorff dimension obtained by Caffarelli, Kohn, and Nirenberg in 1982 has not been improved. 

The work of Colombo, De Lellis, and Massaccesi in \cite{CDM} provides the optimal extension of the Caffarelli-Kohn-Nirenberg theory, dealing definitively with the hyperdissipative case. Tang and Yu in \cite{TY} and the erratum \cite{TY1} were the first to analyze the hypodissipative case $3/4<\alpha <1$. Kwon and Ozanski in \cite{KO} further sharpened the work done in \cite{TY}. Additionally, the only partial regularity result that we know of for non-stationary higher-dimensional Navier-Stokes equations is found in \cite{DG}, where Dong and Gu prove a result analogous to the one in \cite{CKN} in four dimensions. However, we cannot see a way to transfer these techniques to the setting of our interest. 

Our present work is inspired by the paper \cite{KP} of Katz and Pavlovic. The authors of \cite{KP} prove a partial regularity theorem like Scheffer's in \cite{S} by using Fourier analysis to reduce the problem to the study of an infinite-dimensional system of ordinary differential equations. The fluid vector field is decomposed into wavelet energy packets by localizing in both physical and frequency space in a way compatible with the Heisenberg Uncertainty Principle, and the dynamics of the wavelet coefficients can be considered a system of ordinary differential equations. One of the advantages of their approach when compared to the work of Scheffer or Caffarelli, Kohn, and Nirenberg is that the local energy inequality is unnecessary for the proof. Ozanski's work in \cite{O}, which clarifies and extends the Fourier analytic method of Katz and Pavlovic, gets a partial regularity result for Leray-Hopf weak solutions of the hyperdissipative Navier-Stokes equations in $\mathbb{R}^3$ as well as a bound on the box-counting dimension of the singular set.

We now briefly discuss some work done in the "negative" direction of the global regularity problem.
Besides numerical studies like \cite{H} that suggest the development of a finite-time singularity for the Navier-Stokes equations, the paper \cite{T} by Tao provides evidence that at the very least, attacking the problem of global regularity using abstract methods depending on basic $L^p$ bounds for the nonlinearity is a direction certain to fail. Let $\mathbb{P}$ denote the Leray projection on divergence-free vector fields (see Section 1.6 in \cite{TS}). Tao constructs a nonlinearity $C(u,v)$ that is an averaged version of the Euler bilinear operator
$$\mathcal{E}(u,v)=\frac{1}{2}\mathbb{P}\bigg((u\cdot \nabla) v+(v\cdot \nabla) u\bigg),$$
satisfies the cancellation identity and various "harmonic-type" estimates such as
$$\|C(u,v) \|_{p} \leq K\big(\|u\|_{q}\|\nabla v\|_{r}+\|v\|_{q}\|\nabla u\|_{r}\big)$$
where $1/p = 1/q+1/r$ and $1< p,q,r< \infty$, and whose associated partial differential equation admits a finite-time singularity. Here and in the sequel, that an operator $C$ is an "averaged version of $\mathcal{E}$" means that:
$$\langle C(u,v) , w\rangle =$$ \begin{equation} \label{AVERAGE}= \int_\Omega \langle \mathcal{E}(m_{1,\omega}\textrm{Rot}_{R_{1,\omega}}\textrm{Dil}_{\lambda_{1,\omega}} u, m_{2,\omega}\textrm{Rot}_{R_{2,\omega}}\textrm{Dil}_{\lambda_{2,\omega}} v), m_{3,\omega}\textrm{Rot}_{R_{3,\omega}}\textrm{Dil}_{\lambda_{3,\omega}} w\rangle d\mu(\omega)\end{equation}
for all Schwartz vector fields $u,v,w$. Here, $\omega$ is a variable in the probability space $(\Omega, \mu)$, and the maps $R_{i,\omega}: \Omega \to SO(3), \lambda_{i,\omega}: \Omega \to [C^{-1}, C],$ and $m_{i,\omega}: \Omega \to \mathfrak{M}_0$ are measurable maps. We use the notation $\mathfrak{M}_0$ for the space of all Fourier multipliers $m$ of order $0$ with Schwartz symbols equipped with the topology (hence also the $\sigma$-algebra) generated by the Schwartz seminorms. The operators Dil and Rot are respectively the dilation and rotation operator in $\mathbb{R}^3$. See \cite{T} for more details. Similarly, Scheffer proves in \cite{S2} that suitable weak solutions of the Navier-Stokes inequality admit interior singularities. Later in \cite{S3}, Scheffer shows that singularity formation can occur on a fractal set of any dimension less than one, thereby proving that the method used in \cite{CKN} to achieve partial regularity is sharp.

Our main result suggests that improving partial regularity results for the Navier-Stokes equations is not a path to answering the global regularity question, unless one uses more specific structural properties of the Navier-Stokes nonlinearity. We denote the parabolic cylinder of radius $r$ at $(x,t)$ by:
$$\mathcal{P}_r(x,t):= \{ (y,s) : \abs{y-x}< r \textrm{ and } t-r^2< s < t\}$$
Here and in the sequel we denote the closed singular set at blowup time $T$ by
$$S_T:= \{ x\in\mathbb{R}^n : \forall r>0, u\not\in  L^\infty_t C^\infty_x(\mathcal{P}_r(x,T))\}.$$
Our main theorem provides an upper bound on the Hausdorff dimension of $S_T$ for blowup solutions of an averaged three-dimensional $\alpha$-dissipative Navier-Stokes Equation. In particular, we prove:
\begin{theorem}
\label{MAINMAIN}
Let $1<\alpha<5/4$ be arbitrary.
There exists a symmetric bilinear operator $C$ that is an averaged version of $\mathcal{E}$, that obeys the cancellation identity
$$\langle C(u,u), u\rangle =0$$
for all $u\in H^{10}(\mathbb{R}^3)$ that are divergence-free in the distributional sense, and such that, for some Schwartz divergence-free initial vector field $u_0$, there is no global-in-time solution to
$$\begin{gathered} \partial_t u +(-\Delta)^\alpha u +C(u,u) +\nabla p =0\\
u(t=0)=u_0 \quad \quad \textrm{div } u=0.\end{gathered}
$$
Moreover, if $T$ is the time of first blowup, then the closed set $S_T$ has Hausdorff dimension at most $5-4\alpha$.
\end{theorem}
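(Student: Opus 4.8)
The plan is to construct $B$ and $u_0$ in two independent stages: a blowup mechanism and a partial-regularity mechanism, glued through a single dyadic "cascade" model in frequency space, following the wavelet philosophy of Katz--Pavlovic. First I would fix a dyadic decomposition of frequency space into annuli $|\xi|\sim 2^j$, further subdivided into cubes $Q$ of sidelength $\sim 2^j$ localized in physical space at scale $2^{-j}$, and attach to each cube a single real "wavelet coefficient" $u_Q(t)$. The operator $B$ is then defined by prescribing, for a carefully chosen sparse tree of cubes, that $B$ transports energy from a parent cube $Q$ to a bounded number of child cubes $Q'$ at the next dyadic scale, with coupling constants chosen so that (i) $B$ is an average of rotated/translated/rescaled copies of $\mathcal{E}$ — this is where I would invoke Tao's averaging framework from \cite{T}, realizing the cascade coefficients as genuine integrals of $\mathcal{E}$ against a probability measure on the symmetry group — and (ii) the cancellation identity $\langle B(u,u),u\rangle=0$ holds termwise on each parent-child triple, hence globally on $H^{10}$ divergence-free fields. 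The Leray projection $\nabla p$ is absorbed automatically since every wavelet mode is taken divergence-free.

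Next I would establish finite-time blowup for the resulting ODE cascade. On the distinguished tree the system reduces, after discarding the dissipation momentarily, to a Katz--Pavlovic-type recursion $\dot a_j \gtrsim 2^{\theta j} a_{j-1}^2 - 2^{2\alpha j} a_j$ for the amplitudes $a_j$ on scale $2^j$, where $\theta$ is the scaling exponent of $\mathcal{E}$ (one derivative, so $\theta$ is fixed by dimensional analysis). The condition $\alpha<5/4$ is exactly what makes the nonlinear term win in the supercritical regime: choosing $u_0$ to excite only the root of the tree with a large enough amplitude, one shows by the standard barrier/comparison argument (construct a self-similar supersolution $a_j(t)\sim 2^{\beta j}(T-t)^{-\gamma}$ and check it is dominated by the true solution) that $\sum_j 2^{sj} a_j(t)^2 \to\infty$ as $t\to T^-$ for every $s>0$, so no $H^s$ solution survives past $T$; local well-posedness in high regularity (available since $\alpha>3/4>1/2$, making the equation subcritical relative to $L^2$-based energy for short times) gives the solution up to $T$ and identifies $T$ as the first blowup time.

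Then I would prove the partial regularity bound $\dim_{\mathcal H} S_T \le 5-4\alpha$. The strategy, as in \cite{KP}, is a Calder\'on--Zygmund stopping-time/counting argument: call a cube $Q$ at scale $2^j$ "active" if its wavelet coefficient exceeds the threshold at which nonlinear concentration beats dissipation, and bound the number $N_j$ of active cubes at scale $j$ using the global energy inequality (which holds because of the cancellation identity) together with the enhanced dissipation $(-\Delta)^\alpha$. A bookkeeping of the energy flux across scales yields $\sum_j N_j\, 2^{-(5-4\alpha)j}<\infty$ — the exponent $5-4\alpha$ arising from balancing the parabolic scaling $2^{2\alpha}$ against the three spatial plus energy dimensions — and since every $x\in S_T$ lies in a nested sequence of active cubes, the active cubes at scale $j$ furnish a cover of $S_T$ by balls of radius $2^{-j}$ with $\sum_j N_j (2^{-j})^{d}<\infty$ for every $d>5-4\alpha$, which is precisely the Hausdorff-dimension bound. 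Finally I would verify that points outside all active cubes are regular: there the wavelet coefficients decay fast enough at scale $j\to\infty$ that $u\in L^\infty_t C^\infty_x$ on a small parabolic cylinder, by summing a geometric series, which shows $S_T$ is exactly the bad set and is closed.

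The main obstacle I anticipate is not the blowup (Tao's and Katz--Pavlovic's constructions are robust) but reconciling the two mechanisms: the cascade that produces blowup is a \emph{one-dimensional} chain of cubes, so a priori its singular set is a single point and the theorem would be vacuously sharp only at one value of $\alpha$; to realize the full Hausdorff dimension $5-4\alpha$ one must superpose a \emph{space-filling} family of such cascades — roughly $2^{(5-4\alpha)j}$ active cubes at each scale, arranged in a Cantor-like set — while keeping (a) the cancellation identity exact, (b) the averaging representation of $B$ valid (the probability measure defining $B$ must simultaneously encode all the cascades), and (c) the different cascades from interfering destructively so that each still blows up at the same time $T$. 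Controlling this interference, i.e.\ showing the cross-terms $B(u_{Q},u_{Q'})$ for cubes in different cascades are either zero by frequency-support disjointness or small enough to be treated perturbatively, is the delicate technical heart of the argument; I would handle it by making the cascades frequency-lacunary and well-separated, so that $B$ is block-diagonal across cascades up to a rapidly decaying tail.
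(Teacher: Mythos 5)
Your overall architecture --- Tao's cascade construction for the blowup, a Katz--Pavlovic wavelet/counting argument for the partial regularity, and gluing the two by verifying the cascade operator fits the partial-regularity framework --- is exactly the paper's strategy. But the final paragraph of your proposal, which you identify as ``the delicate technical heart,'' rests on a misreading of the statement. The theorem asserts only an \emph{upper bound}: $S_T$ has Hausdorff dimension \emph{at most} $5-4\alpha$. It does not claim the singular set attains that dimension, so there is no need to superpose a space-filling, Cantor-arranged family of $2^{(5-4\alpha)j}$ interacting cascades, and no interference problem to control. The paper proves a general partial regularity theorem valid for any ``amenable'' bilinear operator and then simply applies it to Tao's (essentially one-dimensional) cascade solution, whose singular set may well be a single point; that is entirely consistent with the conclusion, and indeed the whole point of the result is that partial regularity of this strength cannot by itself exclude blowup. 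You would be spending your effort on a much harder theorem that is not being claimed.

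Conversely, you pass too quickly over the two places where the real work lies. First, the partial regularity step is not just bookkeeping $\sum_j N_j 2^{-(5-4\alpha)j}<\infty$: the counting of bad cubes via the energy inequality is the easy half, and the hard half is the $\epsilon$-regularity propagation showing that a cube contained in no mildly bad cube has wavelet coefficient decaying like $2^{-50j}$ up to time $T$; this requires a full paraproduct decomposition of $\langle B(u,u),P_j\phi_Q^2P_ju\rangle$ into low-high, high-low, high-high and local pieces with quantitative commutator estimates for the bump functions. Second, to feed the cascade operator into that machinery you must verify it satisfies Hölder-type bounds of the form $\|B(u,v)\|_r\le K\|u\|_{p_1}(\|v\|_{p_2}+\|\nabla v\|_{p_2})$; the high-frequency half of the cascade sum ($n\ge 0$) diverges without the extra gradient on the right, and making it converge forces a genuine modification of Tao's construction --- the $\psi_i$ must be taken with zero momentum so that $\lambda^n\langle\psi_{i,n},v\rangle$ can be integrated by parts into $-\int\Psi_{i,n}\cdot\nabla v$. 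Your proposal never confronts this, and without it the gluing of the two mechanisms fails.
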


The partial regularity portion of Theorem \ref{MAINMAIN} follows from a more general partial regularity theorem for a family of pseudodifferential equations of "Navier-Stokes type". Then, proving Theorem \ref{MAINMAIN} amounts to finding an equation in the family admitting a finite-time singularity. The proof of our partial regularity theorem will be in Section 2, and we now describe it in more detail. First we define a certain class $\mathfrak{B}$ of bilinear operators.

\begin{definition}
\label{AMEN}
We call a symmetric bilinear operator $B$ $\textbf{amenable}$ if and only if:
\begin{enumerate}
\item $B$ is defined for Schwartz vector fields
\item $B$ satisfies the Cancellation Identity:
\begin{equation}\label{Cancel}
\langle \mathbb{P}B(u,u), u\rangle=0 \quad \textrm{ for divergence-free } u
\end{equation}
\item The linear maps $B^1_v(u):= \mathbb{P}B(u,v), B^2_v(u):= \mathbb{P}B(v,u)$ are pseudodifferential operators in the class $OPS^m_{1,1}$ for some real number $m$ (the definition of this class may be found in Section 5.1)
\item For all $0<\gamma<3$ and for all $1\leq p_1, p_2, r\leq \infty$ satisfying
$$\frac{1}{p_1}+\frac{1}{p_2}+\frac{\gamma}{3} = \frac{1}{r}$$
we have for some constant $K$ depending only on $\gamma$, $p_1$, $p_2$, and $r$: \begin{equation}
\label{amenineq}\| \mathbb{P}B(u,v) \|_{L^r} \leq K \| u\|_{L^{p_1}}(\|v\|_{L^{p_2}} +\| \nabla v\|_{L^{p_2}})\end{equation} for all $u\in L^{p_1}$ and $v\in W^{1,p_2}$ with support of diameter at most $2^{100}$ (note that the endpoint cases of integrability are included).
\item Scaling: There exists a number $\lambda>1$ such that for any $k\in \mathbb{Z}$:
$$B(u(\lambda^k y), u(\lambda^k y))(x) = \lambda^k B(u(y),u(y))(\lambda^k x)$$
\end{enumerate}
\end{definition}
The class $\mathfrak{B}$ is the collection of all amenable operators. Throughout the rest of our work, we may alternately say that $B\in \mathfrak{B}$ or that $B$ is amenable. Our partial regularity theorem is:
\begin{theorem}
\label{PRMAINMAIN}
Let $n\geq 3$ and let $(n+1)/4<\alpha<(n+2)/4$. Let $B$ be an amenable bilinear operator, and let $u_0$ be a Schwartz vector field.
Suppose $u(t)$ is a smooth vector field on $\mathbb{R}^n\times [0,T) \to \mathbb{R}^n$ and suppose there exists a smooth function $p$ on $\mathbb{R}^n\times [0,T)$ such that:
$$
\begin{gathered}
\partial_t u +(-\Delta )^\alpha u+B(u,u)+\nabla p=0\\
\textrm{div } u=0\\
u(t=0)=u_0\end{gathered}
$$
If $T$ is the time of first blowup, then the closed set $S_T$ has Hausdorff dimension at most $n+2-4\alpha$.
\end{theorem}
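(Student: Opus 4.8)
The plan is to adapt the Fourier-analytic ``wavelet'' machinery of Katz--Pavlovic \cite{KP} to the amenable setting, replacing the explicit Euler nonlinearity by the estimates and scaling built into Definition~\ref{AMEN}. First I would fix the ratio $\lambda>1$ from Axiom~(5) and set up a Littlewood--Paley--type decomposition adapted to it: write $u=\sum_Q u_Q$, where $Q$ ranges over $\lambda$-adic cubes and the piece attached to a cube $Q$ of side $\lambda^{-j}$ is localized, up to rapidly decaying tails, to the physical region $3Q$ and to the frequency shell $|\xi|\sim\lambda^{j}$. The fundamental object is the wavelet coefficient $a_Q(t)$, a normalized amplitude of $u_Q$ (so that $a_Q\sim\lambda^{3j/2}\|u_Q\|_{L^2}\sim\|u_Q\|_{L^\infty}$). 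Since $u$ is smooth on $[0,T)$, the only global inputs come from the cancellation identity (\ref{Cancel}): testing the equation against $u$ gives $\tfrac12\tfrac{d}{dt}\|u\|_{L^2}^2=-\|(-\Delta)^{\alpha/2}u\|_{L^2}^2$, hence $\|u(t)\|_{L^2}\le\|u_0\|_{L^2}$ and finite total dissipation $\int_0^T\|(-\Delta)^{\alpha/2}u\|_{L^2}^2\,dt<\infty$; and testing against $u\varphi$ for smooth cutoffs $\varphi$ yields a local energy equality for free, since no weak solutions need be constructed.

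Next I would project the equation onto the $Q$-channel to obtain the evolution inequality for $a_Q$. The dissipation contributes linear damping at rate $\sim\lambda^{2\alpha j}$. The nonlinearity, estimated via the amenability inequality (\ref{amenineq}) for a well-chosen triple $(p_1,p_2,r)$ and $\gamma>0$ together with the $OPS^m_{1,1}$ pseudo-locality of $B^1_v,B^2_v$ (which confines interactions to cubes $Q'$ with $\operatorname{dist}(Q,Q')\lesssim\operatorname{diam}(Q)$ and $j'\lesssim j$, modulo Schwartz tails), contributes quadratic forcing $\lesssim\lambda^{j}\sum_{Q'}\mu(Q,Q')\,a_{Q'}(t)^2$ with scale-consistent weights $\mu$ --- consistency being precisely Axiom~(5). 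The upshot is $\dot a_Q\le-c\lambda^{2\alpha j}a_Q+C\lambda^{j}\,(\text{local quadratic terms})+(\text{tails})$, which pins down the natural threshold: a cube $Q$ of scale $j$ is \emph{active near $T$} if $a_Q(t)\gtrsim\lambda^{(2\alpha-1)j}$ for some $t$ in a fixed short interval $(T-\tau,T)$, since below this value dissipation strictly dominates over $Q$'s natural lifetime $\lambda^{-2\alpha j}$.

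The analytic core is an $\varepsilon$-regularity lemma: there is $\varepsilon_0>0$ such that if, near a point $x_0$, the scale-invariant local energy at all small scales is $<\varepsilon_0$ (equivalently $\sum w_Q a_Q(t)^2<\varepsilon_0$ over cubes meeting a neighborhood of $x_0$ below scale $\lambda^{-j_0}$, uniformly for $t$ near $T$), then $u\in L^\infty_tC^\infty_x(\mathcal{P}_\rho(x_0,T))$ for some $\rho>0$, so $x_0\notin S_T$. I would prove this by a localized Duhamel bootstrap, iterating $u=e^{-(t-s)(-\Delta)^\alpha}u(s)-\int_s^t e^{-(t-\sigma)(-\Delta)^\alpha}\mathbb{P}B(u,u)\,d\sigma$ against a cutoff and using the kernel smoothing $\|\nabla e^{-t(-\Delta)^\alpha}f\|_{L^q}\lesssim t^{-\frac1{2\alpha}-\frac3{2\alpha}(\frac1p-\frac1q)}\|f\|_{L^p}$ with (\ref{amenineq}); the time integrals converge precisely because $\alpha>3/4$ makes the iteration subcritical, and pseudo-locality lets one absorb the cutoff commutator into rapidly decaying tails. \emph{I expect this localization step to be the main obstacle}: unlike the genuinely local Navier--Stokes nonlinearity, $B$ is only pseudo-local, so one must propagate the smallness hypothesis through the $OPS^m_{1,1}$ calculus and verify that the nonlocal tails never destroy the contraction, uniformly as $\alpha\downarrow 3/4$.

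Finally I would combine counting with the $\varepsilon$-criterion. An active cube of scale $j$ carries $\|u_Q\|_{L^2}^2\gtrsim\lambda^{2(2\alpha-1)j}\lambda^{-3j}=\lambda^{-(5-4\alpha)j}$ over an interval on which it stays active, and the differential inequality forces such a cube either to dissipate a definite amount (feeding $\int_0^T\|(-\Delta)^{\alpha/2}u\|_{L^2}^2\,dt$) or to hand its energy to finer active cubes; a bookkeeping argument then bounds the number of active cubes of scale $j$ by $N_j\lesssim\lambda^{(5-4\alpha)j}$. On the other hand, if $x\in S_T$, the $\varepsilon$-regularity lemma forces an active cube within $O(\lambda^{-j})$ of $x$ at every sufficiently large scale $j$, so $S_T$ is covered by $O(\lambda^{(5-4\alpha)j})$ cubes of diameter $O(\lambda^{-j})$. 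Hence for any $s>5-4\alpha$ the $\mathcal{H}^s$ pre-measure of $S_T$ at mesh $\lambda^{-j}$ is $\lesssim\lambda^{(5-4\alpha)j}\lambda^{-sj}\to0$ as $j\to\infty$, so $\mathcal{H}^s(S_T)=0$ and $\dim_{\mathcal H}(S_T)\le 5-4\alpha$, which is the claim; the hypothesis $\alpha<5/4$ only serves to make the bound non-vacuous.
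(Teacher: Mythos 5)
Your proposal follows the same overall architecture as the paper's proof --- the Katz--Pavlovic scheme of Littlewood--Paley projections combined with physical-space localization, a ``badness'' threshold for cubes at each scale, a counting bound of order $\lambda^{(5-4\alpha)j}$ cubes per scale, and a regularity statement away from the bad cubes --- but the two central lemmas are executed differently, and in both places your route is the harder one. First, you define badness pointwise in time ($a_Q(t)\gtrsim\lambda^{(2\alpha-1)j}$ at some $t$ near $T$), which forces you into the dichotomy ``each active cube either dissipates a definite amount or hands its energy to finer scales''; this bookkeeping is the delicate part of the original Katz--Pavlovic argument and you only assert it. The paper sidesteps it by defining \emph{mildly bad} cubes through an integrated condition (local dissipation over $[0,T]$ at frequencies $\geq j$, plus local energy on a tiny terminal time window, exceeding $K2^{-(5-4\alpha)j-100\epsilon j-\gamma j}$), so that the count in Proposition \ref{1cover1} is immediate: Vitali-disjointify, sum the defining inequality, and bound by the global energy dissipation law. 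Second, for the regularity step the paper runs no Duhamel/semigroup bootstrap; instead it pairs the equation with $P_j\phi_{Q,j}^2P_ju$, splits the nonlinearity paradifferentially into low-high, high-low, high-high and local pieces, estimates each using the amenability inequality and the $OPS^m_{1,1}$ commutator machinery, and derives a contradiction (Proposition \ref{1CritReg}) from the assumption that $Q$ and all its relevant ancestors are not mildly bad. Your Duhamel route is plausible, but the two things you would need --- a semigroup smoothing estimate closing the iteration uniformly for $\alpha$ near $3/4$, and control of the nonlocal tails of $B$ under the cutoff --- are precisely the unproved parts; the second is what the paper's Lemmas \ref{smoothing}, \ref{exotic2}, \ref{quantbound} and \ref{bumpcommute} are for.

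One further caveat: your parenthetical claim that testing against $u\varphi$ ``yields a local energy equality for free'' is not available for a general amenable $B$. The cancellation identity is purely global, and $\langle B(u,u),u\varphi\rangle$ has no flux-term structure, which is exactly why the paper works only with the global energy law and the frequency-localized pairings rather than with a Caffarelli--Kohn--Nirenberg-style local energy inequality.
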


One corollary of Theorem \ref{PRMAINMAIN} is a partial regularity result for the $\alpha$-dissipative Navier-Stokes equations for any dimension $n\geq 3$ whenever $\alpha>(n+1)/4$, which, since it is an improvement on what is already known, we state separately: 
\begin{corollary}
\label{CORAL}
Let $n\geq 3$ and let $(n+1)/4<\alpha<(n+2)/4$. Let $u_0$ be a Schwartz vector field.
Suppose $u(t)$ is a smooth vector field on $\mathbb{R}^n\times [0,T) \to \mathbb{R}^n$ and suppose there exists a smooth function $p$ on $\mathbb{R}^n\times [0,T)$ such that:
$$
\begin{gathered}
\partial_t u +(-\Delta )^\alpha u+(u\cdot\nabla)u+\nabla p=0\\
\textrm{div } u=0\\
u(t=0)=u_0\end{gathered}
$$
If $T$ is the time of first blowup, then the closed set $S_T$ has Hausdorff dimension at most $n+2-4\alpha$.
\end{corollary}

Previously, there was no known partial regularity result for the non-stationary $\alpha$-dissipative Navier-Stokes equations in $\mathbb{R}^n$ with spatial dimension $n\geq 5$.  By performing a highly technical analysis of pseudodifferential operators, we get a partial regularity result that also holds for a large family of pseudodifferential equations in addition to the fractionally dissipative Navier-Stokes equations. In particular, this allows us to apply our partial regularity theorem to a pseudodifferential equation admitting a solution blowing up in finite time. Thus, while the conditions placed on an amenable operator suffice to prove a partial regularity result for its associated pseudodifferential equation, they are insufficient to prove regularity for all time. We note that while previous results prove partial regularity of the solution in space-time, our result only concludes that the solution is "partially regular" in space at the blowup time. Thus, although our technique does not require the local energy inequality (and this is one reason why we can generalize to equations arising from amenable bilinear operators) our theorem has a weaker conclusion and does not prove a space-time partial regularity result.

Our paper is organized as follows. In Section 2 we prove Theorem \ref{PRMAINMAIN}. In particular, given the vector field solution $u$, we construct a set $E$ of Hausdorff dimension $n+2-4\alpha$ and prove that it contains the singular set $S_T$. The proof necessitates the use of the "barrier" construction also found in \cite{KP} and \cite{O}, which is the main obstacle to applying our method to the case of classical dissipation. The energy estimates used in Section 2 are proven in Section 3, where we test the pseudodifferential equation against "wavelet" localizations of the solution (localizations in both frequency and physical space). In Section 4, we show that Tao's method is flexible enough to construct a blowup solution to the pseudodifferential equation associated to some amenable bilinear operator, which, together with Theorem \ref{PRMAINMAIN}, proves Theorem \ref{MAINMAIN}. In Section 5, we prove some technical lemmas that are used throughout the paper. 

We would like to thank our advisor, Professor Camillo De Lellis, for suggesting the topic of this paper, for many helpful discussions, and for reviewing several drafts of our work. We also thank him for his constant support and encouragement throughout the process of writing this paper. We thank Vikram Giri for helpful discussions about \cite{KP}. We thank Wojciech Ozanski for his interest in our work, for many useful suggestions, and for pointing out an error in a previous version of this manuscript. We would also like to acknowledge the support of the National Science Foundation in the form of an NSF Graduate Research Fellowship and under Grant No. DMS-1946175.

\section{Partial Regularity}
We refer the reader to Section 5.1 whenever we mention pseudodifferential operators and to Section 5.2 whenever we mention the Littlewood-Paley partition of frequency space.
\subsection{Covering the Singular Set}
Let $\mathbb{P}$ denote the Leray projection. Let $\mathcal{F}$ denote the Fourier transform. We shall employ a Littlewood-Paley partition of frequency space. In particular, for $j\in \mathbb{Z}$ we have pseudodifferential operators $P_j$ with positive and smooth symbols $p_j(\xi)$ that are supported in $\frac{2}{3}2^j < \abs{\xi}<3\cdot 2^j$ and that also satisfy $p_j(\xi) = p_0(2^{-j} \xi)$ and 
\begin{equation}\sum_{j\in \mathbb{Z}} p_j(\xi) =1.\end{equation}
We also define $\tilde{P}_j:= \sum_{k=-2}^{2} P_{j+k}$ to be the sum of all Littlewood-Paley projections whose symbols' supports intersect the support of $p_j(\xi)$. Likewise we may define $\tilde{p}_j(\xi)$ as $\sum_{k=-2}^{k=2} p_{j+k}(\xi)$.
We can consider $P_j f$ as a combination of wavelets supported on cubes of sidelength $2^{-j}$. Keeping this heuristic in mind, we localize on cubes of sidelength almost equal to $2^{-j}$. That is, we fix an $\epsilon>0$ and localize within slightly larger cubes of sidelength $2^{-j(1-\epsilon)}$. Such a localization will allow us to commute cutoffs in frequency space with cutoffs in physical space, with only the addition of a negligible error. Since the Hausdorff dimension estimate is a closed condition, an approximate localization like this will be acceptable. Our localization is achieved with the use of particularly chosen bump functions that we describe below. See also \cite{KP} and \cite{O}. 

Let $Q$ be a cube of sidelength greater than $2^{-j(1-\epsilon)}$. We denote $\lambda Q$ to be the cube with the same center as $Q$ and with $\textrm{sidelength}(\lambda Q) =\lambda\cdot \textrm{sidelength}(Q)$. We define a bump function $1\geq \phi_{Q,j}\geq 0$ such that $\phi_{Q,j}=1$ on $Q$ and is zero outside of $(1+2^{-\epsilon j})Q$. We also require that for every multi-index $\alpha$, there is a constant $C_\alpha$ depending on $\alpha$ but independent of $Q$ and $j$ such that
\begin{equation}
\label{DBOUNDS}
\abs{D^\alpha \phi_{Q,j}}\leq C_\alpha 2^{\abs{\alpha}j(1-\epsilon)}
\end{equation}
Following \cite{KP}, we call these bump functions of type $j$. In line with our previous discussion, we consider $\phi_{Q,j}P_j$ as a projection onto a localized wavelet, and, if $Q$ is a cube with side length exactly equal to $2^{-j(1-\epsilon)}$, we denote the "wavelet coefficient" by
$$\| \phi_{Q,j}P_j f\|_2:= f_Q$$
and say that $Q$ is at level $j$. 
For convenience, we shall use the following notation for a particular dilation of a cube at level $j$:
$$Q_j := (1+2^{-\epsilon j})Q.$$
Also, if $Q$ is a cube at level $j$, we denote:
$$u_{\mathcal{N}^1(Q)}:= \| \phi_{Q_j,j} P_{j-2\leq k \leq j+2}u\|_{L^2},$$
which is a shorthand for the kinetic energy supported on nearby frequencies. We shall also use the notation $N(A)$ to denote the cardinality of a set $A$.

Throughout Section 2 we use various properties satisfied by the bump functions $\phi_{Q,j}$ and the Littlewood-Paley projections $P_j$. Most of these properties can be classified as "commutator estimates" that allow us to move bump functions across Littlewood-Paley projections and vice versa with only the addition of a negligible error. For the benefit of exposition, we defer the technical proofs of these lemmas to Section 5; however, we shall use the results in Section 5 freely here.

We shall now prove a general partial regularity theorem for systems of pseudodifferential equations. These systems will generalize the Navier-Stokes equations in the sense that they arise from the Stokes equations equipped with a nonlinearity of the form $B(u,u)$, where $B$ is an amenable operator in the sense of Definition \ref{AMEN}. Without loss of generality, we may assume in this section that the scaling term $\lambda$ in the fifth part of Definition \ref{AMEN} is equal to $2$. 

We denote the Leray Projection of $B$ by $\hat{B}:=\mathbb{P}B$ for the rest of this section. By the incompressibility condition on $u$ in the statement of the theorem, we can instead consider the following system of pseudodifferential equations:
\begin{equation}
\label{PEQN}
\begin{gathered}
\partial_t u +(-\Delta)^\alpha u +\hat{B}(u,u)=0\\
u(t=0)=u_0.
\end{gathered}
\end{equation}
Then we prove Theorem \ref{PRMAINMAIN}, which we reformulate as
\begin{theorem}
\label{PRTHM}
Let $n\geq 3$ and $(n+1)/4<\alpha < (n+2)/4$, let $B$ be an amenable bilinear operator, and let $u_0$ be a Schwartz vector field.
Suppose that $u(t)$ is a smooth vector field on $\mathbb{R}^n\times [0,T) \to \mathbb{R}^n$ that solves Equation (\ref{PEQN}).
If $T$ is the time of first blowup, then the closed set $S_T$ has Hausdorff dimension at most $n+2-4\alpha$.
\end{theorem}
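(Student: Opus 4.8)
The plan is to follow the Katz–Pavlovic wavelet-counting strategy, adapted to the $\alpha$-dissipative setting and to the general amenable nonlinearity $\hat{B}$.

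\medskip

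\noindent\textbf{Step 1: Reduction to a bound on energy concentration.}
First I would reduce the Hausdorff dimension estimate to a counting statement about the localized wavelet coefficients $f_Q$. Since $u$ is smooth on $[0,T)$ but blows up at $T$, the singular set $S_T$ is precisely the set of points near which the energy at frequency $2^j$ fails to decay as $j\to\infty$. Concretely, I expect to define, for each level $j$ and each threshold, the collection of "bad" cubes $Q$ at level $j$ for which $f_Q$ (evaluated as $t\to T$, or its limsup) exceeds roughly $2^{-\sigma j}$ for an appropriate exponent $\sigma$ tied to the scaling of the equation. A point $x$ in the singular set must be contained in infinitely many bad cubes (one at each arbitrarily fine level), so $S_T$ is covered by the nested bad cubes. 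Then the Hausdorff dimension bound follows if the number of bad cubes at level $j$ grows no faster than $2^{(5-4\alpha)j(1-\epsilon)}$ up to the $\epsilon$-loss, and then we let $\epsilon \to 0$ using that the dimension bound is a closed condition. So the whole theorem hinges on a bound of the form: the number of bad cubes at level $j$ is $\lesssim 2^{(5-4\alpha+o(1))j}$.

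\medskip

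\noindent\textbf{Step 2: The local energy inequality for wavelet coefficients.}
To get that counting bound, I would test Equation (\ref{PEQN}) against $\phi_{Q,j}^2 P_j^2 u$ (or a suitable variant) and track the evolution of $f_Q^2$ in time. The dissipative term $(-\Delta)^\alpha$ contributes a coercive term of size $\sim 2^{2\alpha j} f_Q^2$, which is the mechanism driving energy \emph{down} at scale $2^{-j}$. The nonlinear term $\hat{B}(u,u)$, being amenable, satisfies the Bernstein-type/harmonic estimates in (\ref{amenineq}); combined with the commutator lemmas of Section 5 (to move $\phi_{Q,j}$ past $P_j$ and vice versa with negligible error) and the nuclear-family machinery $\mathcal{N}^l(Q)$, this should yield a differential inequality roughly of the form
\[
\partial_t f_Q^2 \;\le\; -\,c\,2^{2\alpha j} f_Q^2 \;+\; C\,2^{\beta j}\,f_{\mathcal{N}^l(Q)}\,(\text{product of two coefficients from nearby levels}),
\]
where $\beta$ reflects the scaling dimension of $\hat{B}$ and the factor $2^{-j(1-\epsilon)}$-localization. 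The key structural fact is that the nonlinearity is "local" in frequency up to the nuclear-family spread, so the right-hand side only involves cubes in $\mathcal{N}^l(Q)$. The threshold $2^{-\sigma j}$ for badness should be chosen exactly so that, for a bad cube, the nonlinear term can overcome the dissipation — this is where $5-4\alpha$ enters: the dissipation exponent $2\alpha$ competes against the nonlinear scaling, and the dimension deficit $5-4\alpha = (3+2) - 4\alpha$ is the codimension-type quantity that emerges (note this is $0$ exactly at $\alpha=5/4$, consistent with global regularity there).

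\medskip

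\noindent\textbf{Step 3: Counting bad cubes via an energy budget.}
Finally I would run the counting argument: the total energy $\|u(t)\|_2^2$ is controlled (by the cancellation identity $\langle \hat{B}(u,u),u\rangle = 0$, the energy identity gives $\|u(t)\|_2^2 + 2\int_0^t \|(-\Delta)^{\alpha/2}u\|_2^2 \le \|u_0\|_2^2$), hence $\sum_{Q \text{ at level } j} f_Q^2 \lesssim \|u_0\|_2^2$ uniformly in $j$, and moreover $\int_0^T 2^{2\alpha j}\sum_Q f_Q^2\,dt < \infty$. A cube being bad means $f_Q \gtrsim 2^{-\sigma j}$ over a non-negligible time interval before $T$; summing $f_Q^2 \gtrsim 2^{-2\sigma j}$ over all bad cubes against the finite energy-dissipation budget forces the number of bad cubes to be at most (dissipation budget)$/2^{-2\sigma j} \cdot 2^{-2\alpha j}$, and plugging in the value of $\sigma$ that makes Step 2's inequality tight produces exactly the exponent $5-4\alpha$.

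\medskip

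\noindent\textbf{Main obstacle.} I expect the hard part to be Step 2: making the commutator estimates and the interaction between the physical-space localization $\phi_{Q,j}$ and the frequency localization $P_j$ genuinely negligible, while simultaneously closing the differential inequality with the \emph{correct} power of $2^j$. In the classical Navier-Stokes wavelet model one must carefully bookkeep which triples of cubes interact (the nuclear family is designed for this), and here the generality of the amenable operator $\hat{B}$ — in particular that it is only a pseudodifferential operator of type $(1,1)$, a class for which boundedness is delicate — means the pointwise-in-frequency locality is not automatic and must be extracted from property (3) and inequality (\ref{amenineq}) of Definition \ref{AMEN} together with the Section 5 toolbox. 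Getting the $\epsilon$-loss from the $2^{-j(1-\epsilon)}$ localization to enter only as a harmless perturbation of the exponent, so that $\epsilon \to 0$ recovers the sharp $5-4\alpha$, is the other delicate point.
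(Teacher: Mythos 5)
Your outline is the same Katz--Pavlovic wavelet-counting strategy that the paper follows, and Steps 2 and 3 correctly identify the two pillars (a local energy inequality for $u_Q^2$ tested against $P_j\phi_{Q,j}^2P_ju$, and a pigeonhole against the global dissipation budget). But there are two genuine gaps. First, you treat the inclusion $S_T\subset\{\text{points in infinitely many bad cubes}\}$ as essentially definitional (``the singular set is precisely the set of points near which the energy at frequency $2^j$ fails to decay''). This implication is the $\epsilon$-regularity theorem and is where almost all of the work lies: one must show that if $x$ eventually avoids every bad cube, then $\limsup_{t\to T}u_Q(t)$ is small for all fine cubes containing $x$, and this is proved by contradiction, integrating the coefficient ODE up to time $T$ and showing the nonlinear influx cannot beat the coercive term $2^{2\alpha j}u_Q^2$. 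Crucially, closing that argument requires the non-badness not just of $Q$ and its nuclear family but of \emph{all} the coarser rescalings $Q_k$ for $\delta j\le k\le j$ and of the high-frequency tails $\|\phi_{Q,j}P_ku\|_2$ for $k\gg j$; your claim that ``the right-hand side only involves cubes in $\mathcal{N}^l(Q)$'' is false --- the paradifferential splitting produces low-high, high-low, high-high and local pieces, and the first three couple level $j$ to every other scale. The paper also makes the badness condition \emph{scale-monotone} in effect (a fine good cube sits inside no coarser bad cube by the choice of $j\ge\max(j'/\delta,\,j'+1000/\epsilon)$), which is what lets those cross-scale terms be absorbed.

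Second, your counting step defines badness by pointwise largeness of $f_Q$ near time $T$ and then asserts persistence (``over a non-negligible time interval'') in order to charge each bad cube against the integrated dissipation budget. That persistence is not free: a coefficient can spike just before $T$ without contributing a definite amount to $\int_0^T 2^{2\alpha j}f_Q^2\,dt$. The paper sidesteps this entirely by \emph{defining} a mildly bad cube through time-integrated quantities (the integrated local dissipation $\int_0^T\int\sum_{k\ge j}2^{2\alpha k}|\phi_{Q,j}P_ku|^2$ plus a short-time average of $u^2_{\mathcal{N}^{1000/\epsilon}(Q)}$ near $T$), so that the Vitali covering plus the energy law gives the count $N(\mathcal{C}_j)\lesssim 2^{(5-4\alpha+100\epsilon+\gamma)j}$ in one line, and all the difficulty is concentrated in the regularity step. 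If you keep your pointwise definition you must either prove a persistence/barrier lemma or restructure as the paper does.
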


The next subsection is devoted to proving Theorem \ref{PRTHM}. We remark that if we take
$$B(u,v) = \frac{1}{2}\bigg( (u\cdot \nabla)v +(v\cdot \nabla u)\bigg)$$
then we have the usual $\alpha$-dissipative Navier-Stokes equations in Equation (\ref{PEQN}). Subsequently, we get Corollary \ref{CORAL}.

We recall that our solution $u$ is smooth up to time $T$ and that the initial data $u_0$ is Schwartz. These are facts that we shall use frequently and usually without reference. Pairing Equation (\ref{PEQN}) with $u$ and using the cancellation identity in Equation (\ref{Cancel}), we get:
\begin{equation}\label{NEEDIT}\langle\partial_t u, u \rangle + \langle(-\Delta)^\alpha u , u \rangle =0.\end{equation}
Integrating in time then gives us what we shall refer to as the \textbf{energy dissipation law}:
\begin{equation}
\label{Conserv}
\| u(t)\|_2^2 = \|u_0\|_2^2 -2\int_0^t \|(-\Delta)^{\alpha/2} u\|_2^2 dt \leq \|u_0\|_2^2 \quad \forall t\in[0,T).\end{equation}
We shall examine the dynamics of the "wavelet coefficients" $u_Q:= \| \phi_{Q,j}P_j f\|_2$, where $\phi_{Q,j}$ is a bump function of type $j$ and $Q$ is a cube at level $j$. To this end, we pair Equation (\ref{PEQN}) with $P_j \phi_{Q,j}^2 P_j u$, and get, after simplification:
\begin{equation}
\label{COEFFODE}
\partial_t \bigg(\frac{1}{2} u_{Q}^2\bigg)= \quad \langle-\hat{B}(u,u),P_j \phi_{Q,j}^2 P_j u \rangle  -\langle(-\Delta)^{\alpha}u,P_j \phi_{Q,j}^2 P_j u \rangle. \end{equation}
We note that the "wavelet coefficients" $u_Q$ are continuous in time, see Lemma \ref{Integral1} and Corollary \ref{Integral2}. In Section 3, we prove two main estimates for the terms on the right-hand-side of Equation (\ref{COEFFODE}). We estimate the "dissipation" term by
\begin{equation}\label{BIGEST1}
\langle(-\Delta)^\alpha u , P_j \phi_{Q,j}^2 P_j u \rangle \quad \geq -K2^{-200j}+K2^{2\alpha j}u_Q^2 -K 2^{(2\alpha-\epsilon)j} u_{\mathcal{N}^1(Q)}^2.
\end{equation}
We estimate the "nonlinear" term by
$$\abs{ \langle-\hat{B}(u,u),P_j \phi_{Q,j}^2P_j u\rangle} \leq K2^{j(1+\frac{n\delta}{2}+\frac{n\gamma}{3})}u_{\mathcal{N}^1(Q)}u_{Q}+$$ $$+K\sum_{k=\delta j}^{j-10} 2^{\frac{nk}{2}+j +\frac{n\gamma j}{3}}u_{Q_k}u_{\mathcal{N}^1(Q)}u_{Q}+ K \sum_{k>j+10} 2^{nj(\frac{1}{2}-\frac{2\gamma}{3}) +k(n\gamma +1)}u_{Q}\|\phi_{Q_j,j}P_k u\|_2^2 + $$ \begin{equation}\label{BIGEST2}K2^{\frac{(n+2)j}{2}+\frac{n\gamma j}{3}}u_{Q}\|\phi_{Q_j,j}P_{j-10\leq k\leq j+10}u\|_{L^2}^2+K2^{-150j}.\end{equation}
We have used the following notation: for a cube $Q$ at level $j$ we denoted
$$k<j \Rightarrow Q_k := 2^{(j-k)(1-\epsilon)}Q,\quad k\geq j\Rightarrow Q_k = (1+2^{-\epsilon k})Q.$$ 
The estimates hold for all cubes $Q$ at level $j\geq 0$, and we shall now clarify the role of the symbol $K$.

Throughout the rest of our paper, the symbol $K$ will denote a constant appropriately chosen so that every inequality where it appears is satisfied. The choice of $K$ will be clear at each instance of its use, and it will only depend on the $L^2$ norm of the initial data, the blowup time $T$, the dimension $n\geq 3$, and the parameters $\epsilon, \alpha,$ and $\gamma$. Also, we introduce another parameter $\delta$ that depends on $\alpha$ and the dimension $n\geq 3$. The parameter $\delta$ will be used in Section 4 when we estimate the nonlinear term in Equation (\ref{COEFFODE}). Given $n\geq 3$ and $\alpha>(n+1)/4$, we shall carefully choose the parameters $\delta$, $\gamma$, and $\epsilon$ so that our argument closes.

We now describe the construction of a collection of cubes, away from which we have "critical regularity", as in \cite{KP} and \cite{O}. We call a cube $Q$ at level $j$ \textbf{bad} if and only if:
$$\int_0^T \int \sum_{k\geq j} 2^{2\alpha k}\abs{\phi_{Q_j,j}P_{k} u}^2 dxdt \geq  2^{-(n+2-4\alpha)j-\epsilon j}.$$
We shall consider the reverse inequality as the hypothesis of an $\epsilon$-regularity statement compatible with the scaling of the equations. In other words, 
$$(2^m)^{2\alpha -1}u((2^m)x, (2^m)^{2\alpha} t)$$
is a solution of Equation (\ref{PEQN}) for any $m\in \mathbb{Z}$, if $u(x,t)$ is a solution of the same equation. We define the set $M_j$ to be the union of cubes $Q$ for all bad cubes $Q$ at level $j$.
We first obtain a convenient cover of $M_j$:
\begin{proposition}\label{1cover1} There exists a constant $K$ depending only on $\epsilon$ so that 
for all $j\geq K$, there is a covering $\mathcal{C}_j$ of $M_j$ by cubes at level $j$ such that for some other constant $K$ independent of $j$:
$$N(\mathcal{C}_j ) \leq K 2^{(n+2-4\alpha)j+\epsilon j}.$$
\end{proposition}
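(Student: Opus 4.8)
The plan is a "counting the exceptional set" argument: bound the total contribution of \emph{all} mildly bad level-$j$ cubes to a fixed, $j$-independent quantity built from the conserved/dissipated energy, and then divide by the threshold appearing in the definition of "mildly bad". Write $s_j:=2^{-j(1-\epsilon)}$ for the side-length of a level-$j$ cube, put $l:=1000/\epsilon$, and for a level-$j$ cube $Q$ set
$$E(Q):=2^{3000j}\int_{T-2^{-3000j}}^{T}u_{\mathcal{N}^{l}(Q)}^{2}\,dt\;+\;\int_{0}^{T}\!\!\int \sum_{k\ge j}2^{2\alpha k}\bigl|\phi_{Q,j}P_{k}u\bigr|^{2}\,dx\,dt,$$
so that $Q$ is mildly bad precisely when $E(Q)\ge K\,2^{-(5-4\alpha)j-100\epsilon j-\gamma j}$. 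Since $u$ decays in space, $E(Q)\to 0$ as $Q$ recedes to infinity, so $M_j$ is bounded; hence a maximal pairwise disjoint subfamily $\mathcal{C}_j'$ of the mildly bad level-$j$ cubes is finite, and by maximality every mildly bad cube intersects some member of $\mathcal{C}_j'$, so (as congruent cubes that intersect $Q$ lie in $3Q$) we get $\bigcup_{Q\in\mathcal{C}_j'}3Q\supseteq M_j$. Subdividing each $3Q$ into $27$ level-$j$ cubes yields a cover $\mathcal{C}_j$ of $M_j$ by level-$j$ cubes with $N(\mathcal{C}_j)\le 27\,N(\mathcal{C}_j')$, so it suffices to bound $N(\mathcal{C}_j')$.

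The crux is the $j$-uniform estimate
$$\sum_{Q\in\mathcal{C}_j'}E(Q)\le C,\qquad C=C(\|u_0\|_2,T,\epsilon);$$
granting this, each summand is at least $K\,2^{-(5-4\alpha)j-100\epsilon j-\gamma j}$, whence $N(\mathcal{C}_j')\le (C/K)\,2^{(5-4\alpha)j+100\epsilon j+\gamma j}$ and the proposition follows after relabelling constants. For the second term of $E(Q)$ I would use that $0\le\phi_{Q,j}\le 1$ is supported in $(1+2^{-\epsilon j})Q\subseteq 2Q$ and that the disjoint congruent cubes $\{Q\}_{Q\in\mathcal{C}_j'}$ have dilates $\{2Q\}$ of overlap bounded by an absolute constant; hence $\sum_{Q}|\phi_{Q,j}P_ku(x)|^{2}\lesssim |P_ku(x)|^{2}$ pointwise, and therefore
$$\sum_{Q\in\mathcal{C}_j'}\int_{0}^{T}\!\!\int\sum_{k\ge j}2^{2\alpha k}|\phi_{Q,j}P_ku|^{2}\,dx\,dt\;\le\;C'\int_{0}^{T}\!\!\int\sum_{k\in\mathbb{Z}}2^{2\alpha k}|P_ku|^{2}\,dx\,dt\;\le\;C''\int_{0}^{T}\|(-\Delta)^{\alpha/2}u\|_2^{2}\,dt\;\le\;C''\|u_0\|_2^{2},$$
where the middle inequality is Plancherel together with $\sum_k 2^{2\alpha k}p_k(\xi)^{2}\le C(\alpha)|\xi|^{2\alpha}$ (only $O(1)$ terms are nonzero and $\sum_k p_k^2\le 1$), and the last is the energy dissipation law (\ref{Conserv}).

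For the first term of $E(Q)$ I would decompose $\mathcal{N}^l(Q)$ according to the level $j'$ of its member cubes. Only the $O(l)=O(1/\epsilon)$ levels $j'\in[j-2l,j+2l]$ occur, every member cube lies within $C(\epsilon)s_j$ of $Q$ (the nuclear recursion enlarges the relevant neighbourhood by a fixed factor at each of the $l$ steps, with $l,\epsilon$ fixed), and $N(\mathcal{N}^l(Q))\le 2^{13l}$. Combining these with the disjointness and $s_j$-separation of $\mathcal{C}_j'$, for each fixed $j'$ the multiset of level-$j'$ cubes arising across all $Q\in\mathcal{C}_j'$ has pointwise overlap bounded by some $C(\epsilon)$; since $u_{Q'}^{2}=\int\phi_{Q',j'}^{2}|P_{j'}u|^{2}$ and the bump supports $(1+2^{-\epsilon j'})Q'$ of level-$j'$ cubes likewise have bounded overlap, this gives $\sum_{Q\in\mathcal{C}_j'}\sum_{Q'\in\mathcal{N}^l(Q),\ \mathrm{lvl}(Q')=j'}u_{Q'}^{2}\le C(\epsilon)\|P_{j'}u\|_2^{2}\le C(\epsilon)\|u\|_2^{2}$. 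Summing the $O(1/\epsilon)$ levels and then integrating over $[T-2^{-3000j},T]$ with $\sup_t\|u(t)\|_2^2\le\|u_0\|_2^2$ from (\ref{Conserv}) yields $\sum_{Q\in\mathcal{C}_j'}\int_{T-2^{-3000j}}^{T}u_{\mathcal{N}^l(Q)}^{2}\,dt\le C(\epsilon)\,2^{-3000j}\|u_0\|_2^{2}$, and multiplying by $2^{3000j}$ closes the estimate.

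The step I expect to be the main obstacle is the combinatorial bookkeeping in the last paragraph: verifying that every overlap/multiplicity constant — for the disjoint family $\mathcal{C}_j'$, for the nuclear families $\mathcal{N}^l(Q)$ restricted to each internal level $j'$, and for the associated bump supports — is genuinely independent of $j$, depending only on $\epsilon$ (through $l=1000/\epsilon$) and on absolute constants. Everything else reduces to Plancherel together with the a priori energy bound (\ref{Conserv}).
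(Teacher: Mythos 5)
Your proposal is correct and is essentially the paper's own argument: extract a disjoint subfamily of the mildly bad cubes (Vitali / maximal disjoint selection), sum the quantity defining mild badness over that subfamily, bound the total by a $j$-independent constant using Plancherel and the energy dissipation law (\ref{Conserv}), and divide by the threshold. The only cosmetic difference is that the paper applies the Vitali lemma to the dilates $2^{3000/\epsilon}Q$ so that the nuclear families $\mathcal{N}^{1000/\epsilon}(Q')$ become exactly disjoint, whereas you disjointify only the level-$j$ cubes and absorb the resulting bounded overlap of the nuclear families into a constant $C(\epsilon)$ — both routes close the estimate.
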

\begin{proof}
Let $\mathcal{A}$ be the collection of cubes $2Q$ where $Q$ is bad and at level $j$. By the Vitali Covering Lemma (Lemma \ref{Vitali}), there are disjoint cubes $2Q'$ in $\mathcal{A}$ such that the collection of cubes $10Q'$ covers $M_j$. However, any cube of sidelength $10\cdot 2^{-j(1-\epsilon)}$ can be covered by $K$ cubes at level $j$ ($K$ depending on $\epsilon$ and $n\geq 3$ only), so we are led to define $\mathcal{C}_j$ to be the collection of the cubes at level $j$ that cover the cubes $10Q'$. Now since the cubes $Q'$ are bad with cubes $2Q'$ disjoint, the cubes $Q_j'$ are also disjoint. Indeed, by always choosing $j\geq 100/\epsilon$, each such cube $Q_j' = (1+2^{-\epsilon j})Q'\subset (3/2)Q'$ is strictly contained in the cube $2Q'$. By the energy dissipation law and Lemma \ref{finiteband1} we may conclude:
$$N(\mathcal{C}_j)2^{-(n+2-4\alpha)j-\epsilon j}\leq$$
$$\leq K \sum_{Q'} \sum_{k\geq j}2^{2\alpha k} \int_0^T\int \abs{\phi_{Q'_j,j}P_k u}^2 dxdt\leq K  \sum_{k\geq j}2^{2\alpha k}\int_0^T \int \sum_{Q'}\abs{\phi_{Q'_j,j}P_{k} u}^2dxdt \leq $$ $$\leq K   \sum_{k\geq j}2^{2\alpha k}\int_0^T \int_{\mathbb{R}^n}\abs{P_{k} u}^2 dxdt \leq K,$$
which shows that there are at most $K2^{(n+2-4\alpha)j+\epsilon j}$ cubes in the collection. 
\end{proof}

\subsection{Critical Regularity Away from Bad Cubes}
Note that the cube $Q$ in the following proposition may actually be bad. The only condition that we place on $Q$ is that some large dilations of $Q$ are not bad, and this will be sufficient for the following proposition.
\begin{proposition}
\label{PRCRITREG}
There exists $j_0$ sufficiently large so that for any cube $Q$ at level $j$ such that the cubes $Q_{k}$ are not bad for any $k\in [\delta j, j-10]$, we have for all $t\in [0,T]$:
$$u_Q(t)< 2^{-\frac{j}{2}(n+2-4\alpha+\epsilon)}.$$
\end{proposition}
\begin{proof}
The claim is true for sufficiently small $t\geq 0$ because our initial data is Schwartz. Let $t_0>0$ be the first time when the claim is not true. More explicitly, we assume that for any $j_0$ there exists $Q$ at some level $j$ such that the cubes $Q_{k}$ are not bad for all $k\in [\delta j, j-10]$ but $u_Q(t_0)\geq 2^{-\frac{j}{2}(n+2-4\alpha+\epsilon)}$. Let $t_1\in (0,t_0)$ be the largest time so that $u_Q(t_1)\leq \frac{1}{2}2^{-\frac{j}{2}(n+2-4\alpha+\epsilon)}$, so that 
\begin{equation}\label{CRCONTRA}\frac{1}{2}2^{-\frac{j}{2}(n+2-4\alpha+\epsilon)} \leq u_Q(t) \leq  2^{-\frac{j}{2}(n+2-4\alpha+\epsilon)}\end{equation}
for all $t\in (t_1, t_0)$. Recall that our estimate in Equation (\ref{BIGEST2}) states that for any $\delta>0$:
$$\abs{ \langle-\hat{B}(u,u),P_j \phi_{Q,j}^2P_j u\rangle} \leq K2^{j(1+\frac{n\delta}{2}+\frac{n\gamma}{3})}u_{\mathcal{N}^1(Q)}u_{Q}+$$ $$+K\sum_{k=\delta j}^{j-10} 2^{\frac{nk}{2}+j +\frac{n\gamma j}{3}}u_{Q_k}u_{\mathcal{N}^1(Q)}u_{Q'}+ K \sum_{k>j+10} 2^{nj(\frac{1}{2}-\frac{2\gamma}{3}) +k(n\gamma +1)}u_{Q'}\|\phi_{Q_j,j}P_k u\|_2^2 +$$ $$+K2^{\frac{(n+2)j}{2}+\frac{n\gamma j}{3}}u_{Q}\|\phi_{Q_j,j}P_{j-10\leq k\leq j+10}u\|_{L^2}^2+K2^{-150j}.$$
We thus conclude from Equation (\ref{COEFFODE}) and Equation (\ref{BIGEST1}) that
$$\frac{3}{4} 2^{-j(n+2-4\alpha+\epsilon)} \leq   u_Q^2(t_0)- u_Q^2(t_1) \leq \int_{t_1}^{t_0}\bigg( K2^{j(1+\frac{n\delta}{2}+\frac{n\gamma}{3})}u_{\mathcal{N}^1(Q)}u_{Q}+$$ $$+K\sum_{k=\delta j}^{j-10} 2^{\frac{nk}{2}+j +\frac{n\gamma j}{3}}u_{Q_k}u_{\mathcal{N}^1(Q)}u_{Q}+ K \sum_{k>j+10} 2^{nj(\frac{1}{2}-\frac{2\gamma}{3}) +k(n\gamma +1)}u_{Q}\|\phi_{Q_j,j}P_k u\|_2^2 +$$ $$+K2^{\frac{(n+2)j}{2}+\frac{n\gamma j}{3}}u_{Q}\|\phi_{Q_j,j}P_{j-10\leq k\leq j+10}u\|_{L^2}^2\bigg)dt- \int_{t_1}^{t_0}K2^{2\alpha j} u_Q^2dt +$$ \begin{equation}\label{BIGEST100}\int_{t_1}^{t_0} K 2^{(2\alpha-\epsilon)j}u_{\mathcal{N}^1(Q)}^2 +K2^{-150j} .\end{equation}
We observe that since $Q_{j-10}$ is not a bad cube we have, for some constant $K$ depending on $\alpha, \epsilon, n$:
\begin{equation}\label{NOTMILD}\int_0^T \int \sum_{k\geq j-10} 2^{2\alpha k} \abs{\phi_{Q_{j-10},j-10}P_k u}^2 dxdt \leq K 2^{-(n+2-4\alpha +\epsilon)j}\end{equation}
and in particular
\begin{equation}\label{TOOGOOD}\int_0^T u_Q^2 dt\leq\int_{0}^T u_{\mathcal{N}^{1}(Q)}^2dt \leq\int_{0}^T \| \phi_{Q_j,j} P_{j-10\leq k \leq j+10}\|_{L^2}^2dt \leq K2^{-(n+2-2\alpha+\epsilon)j}.\end{equation}
In addition, since $Q_{k}$ is not bad for any $k\in [\delta j, j-10]$, we have
\begin{equation}\label{TOOGOOD2}\int_0^T u_{Q_k}^2 dt \leq K2^{-(n+2-2\alpha+\epsilon)k}\end{equation}
for any $k\in [\delta j, j-10]$.
We shall reach a contradiction by appropriately bounding the terms in the estimate of the nonlinearity. First we observe by the Cauchy-Schwarz inequality:
$$2^{j(1+\frac{n\delta}{2}+\frac{n\gamma}{3})}\int_{t_1}^{t_0}u_{\mathcal{N}^1(Q)}u_{Q} \leq 2^{j(1+\frac{n\delta}{2}+\frac{n\gamma}{3})}\bigg(\int_{t_1}^{t_0}u_{\mathcal{N}^1(Q)}^2dt\bigg)^{1/2}\bigg(\int_{t_1}^{t_0}u_{Q}^2dt\bigg)^{1/2}.$$
Using Equation (\ref{TOOGOOD}), we have that
$$K2^{j(1+\frac{n\delta}{2}+\frac{n\gamma}{3})}\int_{t_1}^{t_0}u_{\mathcal{N}^1(Q)}u_{Q} \leq K2^{j(1+\frac{n\delta}{2}+\frac{n\gamma}{3}-(n+2-2\alpha+\epsilon))j}\leq K 2^{-(n+2-4\alpha+\epsilon)j} 2^{(1+\frac{n\delta}{2}+\frac{n\gamma}{3}-2\alpha)j}.$$
Since $\alpha>(n+1)/4$, we may choose $\delta<(n-1)/n$ and  $\gamma$ sufficiently small so that
$$1+\frac{n\delta}{2}+\frac{n\gamma}{3}-2\alpha < \frac{n\gamma}{3} +2\big(\frac{n+1}{4}-\alpha\big)<0.$$
Thus, choosing $j_0$ sufficiently large, we have for all $j\geq j_0$:
$$K2^{(1+\frac{n\delta}{2}+\frac{n\gamma}{3}-2\alpha)j} \leq \frac{1}{1000}.$$
Therefore
\begin{equation}\label{STEP100}K2^{j(1+\frac{n\delta}{2}+\frac{n\gamma}{3})}\int_{t_1}^{t_0}u_{\mathcal{N}^1(Q)}u_{Q} \leq \frac{1}{1000} 2^{-(n+2-4\alpha+\epsilon)j}.\end{equation}
We continue our analysis with the low-high terms. We observe by the Cauchy-Schwarz inequality and Equation (\ref{CRCONTRA}) that
$$2^{\frac{nk}{2}+j +\frac{n\gamma j}{3}}\int_{t_1}^{t_0}u_{Q_k}u_{\mathcal{N}^1(Q)}u_{Q}\leq 2^{\frac{nk}{2}+j+\frac{n\gamma j}{3}} 2^{-\frac{j}{2}(n+2-4\alpha+\epsilon)}\bigg(\int_{t_1}^{t_0} u_{Q_k}^2 dt\bigg)^{1/2} \bigg(\int_{t_1}^{t_0} u_{\mathcal{N}^1(Q)}^2dt\bigg)^{1/2}.$$
Thus, by Equation (\ref{TOOGOOD}) and Equation (\ref{TOOGOOD2}), we have
$$K2^{\frac{nk}{2}+j +\frac{n\gamma j}{3}}\int_{t_1}^{t_0}u_{Q_k}u_{\mathcal{N}^1(Q)}u_{Q}\leq K 2^{\frac{nk}{2}+j+\frac{n\gamma j}{3}} 2^{-\frac{j}{2}(n+2-4\alpha+\epsilon)} 2^{-(n+2-2\alpha+\epsilon)\frac{k}{2}}2^{-(n+2-2\alpha+\epsilon)\frac{j}{2}}.$$
Simplifying, we get
\begin{equation}\label{ESTIMATEY}K2^{\frac{nk}{2}+j +\frac{n\gamma j}{3}}\int_{t_1}^{t_0}u_{Q_k}u_{\mathcal{N}^1(Q)}u_{Q}\leq K2^{-j(n+2-4\alpha+\epsilon)}2^{-(2-2\alpha+\epsilon)\frac{k}{2}}2^{j+\frac{n\gamma j}{3}-\alpha j}.\end{equation}
Recall that $n\geq 3$ and $\alpha >(n+1)/4 \geq 1$, so we get (also assuming $0<\epsilon<2\alpha-2$) from Equation (\ref{ESTIMATEY}):
$$\sum_{k=\delta j}^{j-10}K2^{\frac{nk}{2}+j +\frac{n\gamma j}{3}}\int_{t_1}^{t_0}u_{Q_k}u_{\mathcal{N}^1(Q)}u_{Q}\leq K2^{-j(n+2-4\alpha+\epsilon)}2^{-(2-2\alpha+\epsilon)\frac{j}{2}}2^{j+\frac{n\gamma j}{3}-\alpha j}.$$
Assuming that $\gamma<3\epsilon/(2n)$ gets us
$$-(1-\alpha+\epsilon/2)+1+\frac{n\gamma}{3}-\alpha= n\gamma/3-\epsilon/2<0.$$
Then choosing $j_0$ large enough, we may conclude that
$$K2^{-(2-2\alpha+\epsilon)\frac{j}{2}}2^{j+\frac{n\gamma j}{3}-\alpha j}\leq \frac{1}{1000}$$
for all $j\geq j_0$. This allows us to conclude that
\begin{equation}\label{STEP101}\sum_{k=\delta j}^{j-10}K2^{\frac{nk}{2}+j +\frac{n\gamma j}{3}}\int_{t_1}^{t_0}u_{Q_k}u_{\mathcal{N}^1(Q)}u_{Q}\leq \frac{1}{1000}2^{-j(n+2-4\alpha+\epsilon)}.\end{equation}
We proceed to examine the high-high terms. By Equation (\ref{CRCONTRA}) and Equation (\ref{NOTMILD}) we have
$$\int_{t_1}^{t_0} K \sum_{k>j+10} 2^{nj(\frac{1}{2}-\frac{2\gamma}{3}) +k(n\gamma +1)}u_{Q}\|\phi_{Q_j,j}P_k u\|_2^2 dt \leq$$ $$\leq 2^{-(n+2-4\alpha+\epsilon)\frac{j}{2}}\int_{t_1}^{t_0} K \sum_{k>j+10} 2^{nj(\frac{1}{2}-\frac{2\gamma}{3}) +k(n\gamma +1)}\|\phi_{Q_j,j}P_k u\|_2^2 dt\leq $$
$$\leq K 2^{-(n+2-4\alpha+\epsilon)\frac{j}{2}}2^{\frac{nj}{2}-\frac{2\gamma n j}{3}}2^{-j(n+2-4\alpha+\epsilon)}\sum_{k>j+10} 2^{k(n\gamma+1-2\alpha)}.$$
Since $\alpha>1$, and since we can always assume that $\gamma < 1/n$, we have that the sum converges, yielding 
$$\int_{t_1}^{t_0} K \sum_{k>j+10} 2^{nj(\frac{1}{2}-\frac{2\gamma}{3}) +k(n\gamma +1)}u_{Q}\|\phi_{Q_j,j}P_k u\|_2^2 dt \leq$$
$$\leq K2^{-(n+2-4\alpha+\epsilon)j} 2^{j(\frac{n}{2}-\frac{2\gamma n}{3}-\frac{(n+2-4\alpha+\epsilon)}{2}+n\gamma+1-2\alpha)}.$$
However, choosing $\gamma < \frac{3\epsilon}{2n}$, we can guarantee that 
$$\frac{n}{2}-\frac{2\gamma n}{3}-\frac{(n+2-4\alpha+\epsilon)}{2}+n\gamma+1-2\alpha = \frac{n\gamma}{3}-\frac{\epsilon}{2}<0,$$
so that, for some $j_0$ large enough, we in fact have
\begin{equation}\label{STEP102}\int_{t_1}^{t_0} K \sum_{k>j+10} 2^{nj(\frac{1}{2}-\frac{2\gamma}{3}) +k(n\gamma +1)}u_{Q}\|\phi_{Q_j,j}P_k u\|_2^2 dt \leq \frac{1}{1000} 2^{-(n+2-4\alpha+\epsilon)j}.\end{equation}
The local terms are dealt with by using Equation (\ref{TOOGOOD}) and Equation (\ref{CRCONTRA}). First, we see that
$$\int_{t_1}^{t_0} K 2^{\frac{(n+2)j}{2}+\frac{n\gamma j}{3}} u_Q \| \phi_{Q_j,j} P_{j-10\leq k \leq j+10}\|_{L^2}^2 dt \leq$$ $$\leq K2^{\frac{(n+2)j}{2}+\frac{n\gamma j}{3}-2\alpha j}2^{-\frac{j}{2}(n+2-4\alpha+\epsilon)}2^{-(n+2-4\alpha+\epsilon)j}.$$
Then, choosing $\gamma < \frac{3\epsilon}{2n}$, we can guarantee that 
$$\frac{n+2}{2}+\frac{n\gamma}{3}-2\alpha-\frac{(n+2-4\alpha+\epsilon)}{2}= \frac{n\gamma}{3}-\frac{\epsilon}{2}<0,$$
so that, for some $j_0$ large enough, we in fact have
\begin{equation}\label{STEP103}\int_{t_1}^{t_0} K 2^{\frac{(n+2)j}{2}+\frac{n\gamma j}{3}} u_Q \| \phi_{Q_j,j} P_{j-10\leq k \leq j+10}\|_{L^2}^2 dt \leq \frac{1}{1000} 2^{-(n+2-4\alpha+\epsilon)j}.\end{equation}
Finally, by Equation (\ref{TOOGOOD}), there exists $j_0$ large enough so that 
\begin{equation}\label{STEP104}\int_{t_1}^{t_0} K 2^{(2\alpha-\epsilon)j}u_{\mathcal{N}^1(Q)}^2 dt \leq  2^{-\epsilon j} K 2^{-(n+2-4\alpha+\epsilon)j} \leq \frac{1}{1000} 2^{-(n+2-4\alpha+\epsilon)j}.\end{equation}
By Equation (\ref{BIGEST100}), Equation (\ref{STEP100}), Equation (\ref{STEP101}), Equation (\ref{STEP102}), Equation (\ref{STEP103}), and Equation (\ref{STEP104}), we conclude that
$$\frac{3}{4} 2^{-j(n+2-4\alpha+\epsilon)}\leq \frac{1}{100} 2^{-j(n+2-4\alpha+\epsilon)} -K \int_{t_1}^{t_0} 2^{2\alpha j}u_Q^2 dt\leq \frac{1}{100}2^{-j(n+2-4\alpha+\epsilon)},$$
which implies that $3/4\leq 1/100,$ a contradiction.
\end{proof}

\subsection{The Barrier}
We follow the construction of the barrier as presented by Ozanski in \cite{O}, and we attempt to use similar notation, wherever we can.
Recall that we denoted the union of all bad cubes at level $j$ by $M_j$ and we found a collection of cubes at level $j$, denoted $\mathcal{C}_j$, that covers $M_j$ and satisfies 
$$N(\mathcal{C}_j) \leq K 2^{(n+2-4\alpha+\epsilon)j}.$$

As in \cite{O}, we wish to find another (possibly larger) cover of $M_j$, $\mathcal{B}_j$, of cubes at level $j$ satisfying the \textbf{barrier property}: for all $x\not\in \cup_{Q\in \mathcal{B}_j} Q$, there exists $0<r<1$ such that $\partial( rQ_j(x))\cap \cup_{Q\in \mathcal{B}_k} Q = \emptyset$ for all $k\geq j$. We denoted the cube at level $j$ centered at $x$ by $Q_j(x)$, and we shall call the set $\partial(rQ_j(x))$ the \textbf{barrier at $x$}.

The following geometric lemma is proven in \cite{O} for cubes in $\mathbb{R}^3$, but it in fact carries over to cubes in any spatial dimension $\mathbb{R}^n$, whenever $n\geq 3$, since the proof only involves the use of dilations. See \cite{O} for details of the proof.
\begin{lemma}[\cite{O}]
\label{GEOMETRIC}
Let $n\geq 3$. Let $x,y$ be two different points in $\mathbb{R}^n$ and let $Q(y)$ and $Q'(x)$ be cubes of sidelength $2a$ and $2b$, respectively. If $\partial(rQ(y))\cap Q'(x)\not= \emptyset$, then $r\in [r_{Q'}-b/a, r_{Q'}+b/a]$, where $r_{Q'}>0$ is such that $x \in \partial(r_{Q'}Q)$.
\end{lemma}
We now proceed to construct a collection of cubes at level $j\geq 0$ that satisfies the barrier property and covers the set $M_j$.
\begin{lemma}
\label{BARRIER}
There exists a collection $\mathcal{B}_j$ of cubes at level $j\geq 0$ that satisfies the barrier property and covers the set $M_j$. In addition, the cardinality of $\mathcal{B}_j$ is at most
\begin{equation}\label{RIGHTCARD}N(\mathcal{B}_j) \leq K 2^{j(n+2-4\alpha+\epsilon)}.\end{equation}
\end{lemma}
\begin{proof}
First we find a cover $\mathcal{B}_j'$ of $M_j$ by cubes at level $j$ so that for any cube $Q$ disjoint from every element of $\mathcal{B}_j'$, there exists a number $r>0$ such that $\partial(rQ)$ is disjoint from every bad cube at level $k$ for all $k\geq j$. In addition the cover $\mathcal{B}_j'$ will satisfy
$$N(\mathcal{B}_j') \leq K 2^{j(n+2-4\alpha+\epsilon)}.$$
Then, the cover $\mathcal{B}_j$ will be obtained by replacing each cube $Q$ in the cover $\mathcal{B}_j'$ by its dilation $3Q$ and then covering each dilated cube by at most $10^n$ cubes at level $j$. This process will result in a cover $\mathcal{B}_j$ having the desired cardinality bound, as in Equation (\ref{RIGHTCARD}), but with a slightly larger constant $K$. In addition, for any point $x\not\in \mathcal{B}_j$, $Q_j(x)$ is a cube disjoint from all the elements $\mathcal{B}_j'$, so the barrier property also gets transferred to the cover $\mathcal{B}_j$. We thus proceed to construct the cover $\mathcal{B}_j'$. 

Following \cite{O}, we call a cube $Q$ at level $j$ \textbf{$k$-naughty} if and only if $Q$ intersects more than $\eta 2^{(k-j)(n+2-4\alpha+2\epsilon)}$ elements of $\mathcal{C}_k$, where $\eta$ is a parameter to be chosen later in this proof. We remark here that, for sufficiently large $k$, there are in fact no $k$-naughty cubes because there are only at most $K2^{k(n+2-4\alpha+\epsilon)}$ cubes in the entire collection $\mathcal{C}_k$. We call a cube $Q$ at level $j$ \textbf{naughty} if and only if it is $k$-naughty for all $k\geq j$. As before, we remark that this imposes only finitely many conditions on the cube $Q$. 

We construct a cover $\mathcal{B}_{j,k}$ of all the $k$-naughty cubes at level $j$ such that
$$N(\mathcal{B}_{j,k}) \leq K \eta^{-1} 2^{j(n+2-4\alpha+\epsilon)}2^{\epsilon(j-k)}.$$
Let $Q^1$ be a $k$-naughty cube at level $j$ and given the $k$-naughty cubes $Q^1,\ldots Q^{l}$, let $Q^{l+1}$ be a $k$-naughty cube at level $j$ disjoint from $3Q^i$ for all $i\in \{1,\ldots, l\}$. Then $Q^{l+1}$ must intersect at least $\eta 2^{(k-j)(n+2-4\alpha+2\epsilon)}$ elements of $\mathcal{C}_k$ that are disjoint from the elements of $\mathcal{C}_k$ intersected by the cubes $Q^i$ for $i\in \{1,\ldots, l\}$. Since we have a definite bound on the number of cubes in $\mathcal{C}_k$, we see that this iterative process terminates after 
$$L\leq N(\mathcal{C}_k)/(\eta 2^{(k-j)(n+2-4\alpha+2\epsilon)}) \leq K \eta^{-1} 2^{(n+2-4\alpha)j} 2^{\epsilon(j-k)}$$
steps. In particular the family $\{ 3 Q^1, \ldots 3 Q^L\}$ covers the $k$-naughty cubes at level $j$. Covering each dilated cube $3Q^i$ by at most $4^n$ cubes at level $j$ gives us our desired cover. 

Next we let 
$$\mathcal{B}_j':= \cup_{k\geq j} \mathcal{B}_{j,k},$$
which covers the set $M_j$, because $\mathcal{B}_{j,j}$ covers all the $j$-naughty cubes at level $j$. Let $Q$ be a cube disjoint from all elements of $\mathcal{B}_j'$. Recall that we want to find some $0<r<1$ such that $\partial(rQ)$ does not intersect any bad cube at level $k$ for any $k\geq j$. Let $C^k(Q)$ be the collection of cubes $Q'$ from $\mathcal{C}_k$ intersecting $Q$. Since $Q$ is not naughty, we have
$$N(C^k(Q)) \leq \eta 2^{(k-j)(n+2-4\alpha+2\epsilon)}.$$
Now let $Q'$ be an arbitrary element of $\mathcal{C}_k$ that intersects $\partial(rQ)$ with $2^{-(1-\epsilon)}<r<1$. Since $r<1$, we necessarily have that $Q'\in C^k(Q)$, and since $r> 2^{-(1-\epsilon)}$, we necessarily have that the center of $Q'$ is different than the center of $Q$ (this is a requirement on $r$ that is present but unexplained in \cite{KP} while omitted in \cite{O}). Now we may let $r_{Q'}$ be such that $\partial(r_{Q'}Q)$ contains the center of $Q'$, so by the geometric Lemma \ref{GEOMETRIC} (with $2a = 2^{-j(1-\epsilon)}$ and $2b = 2^{-k(1-\epsilon)}$), we have
$$\partial(rQ) \cap Q' \not= \emptyset \Longrightarrow r\in [r_{Q'} - 2^{(1-\epsilon)(j-k)}, r_{Q'} + 2^{(1-\epsilon)(j-k)}].$$
Thus if $f_k(r)$ is the number of bad cubes at level $k$ that intersect $\partial(rQ)$, we may estimate
$$f_k(r) \leq \sum_{Q'\in C^k(Q)} \chi_{[r_{Q'} - 2^{(1-\epsilon)(j-k)}, r_{Q'} + 2^{(1-\epsilon)(j-k)}]}(r).$$
Integrating we have $$\|f_k(r)\|_{L^1(2^{-(1-\epsilon)},1)} \leq 2 N(C^k(Q)) 2^{(1-\epsilon)(j-k)}\leq 2 \eta 2^{(4\alpha-n-1-3\epsilon)(j-k)}.$$
Denoting $\sum_{k\geq j} f_k := f$, we conclude, by using that $\alpha>(n+1)/4$ (choosing $0<\epsilon < (4\alpha-n-1)/3$) and summing, that
$$\|f\|_{L^1((2^{-(1-\epsilon)},1))} \leq c\eta.$$
By choosing $\eta$ small enough we can use the fact that $f$ takes only integer values to guarantee the existence of $r\in (2^{-(1-\epsilon)},1)$ such that $f(r)=0$. This number $r$ is the number desired so that $\mathcal{B}_j'$ has the barrier property. Indeed, by Chebyshev's inequality and assuming $\eta$ sufficiently small (here $m(\cdot)$ denotes the Lebesgue measure on $\mathbb{R}$)
$$m(\{r\in (2^{-(1-\epsilon)}, 1) : f(r)\geq 1\}) \leq \int_{2^{-(1-\epsilon)}}^1 \abs{f} dr < \frac{1}{1000}(1-2^{-(1-\epsilon)}),$$
so there certainly exists some $r\in (2^{-(1-\epsilon)},1)$, with the desired property.
\end{proof}

We let 
$$E := \limsup_{j\to \infty} \mathcal{B}_j$$
and our goal is to show 
\begin{equation}
\label{GOAL22}
S_T \subset E.
\end{equation}
By the bound in Equation (\ref{RIGHTCARD}) and Lemma \ref{dimcompute}, we know that $\mathcal{H}(E)\leq n+2-4\alpha+\epsilon$, which combined with Equation (\ref{GOAL22}) and the arbitrary smallness of $\epsilon$, will finish the proof of Theorem \ref{PRMAINMAIN}.

\subsection{Regularity Away from Naughty Cubes}

We prove Equation (\ref{GOAL22}).
Our partial regularity result requires the use of the barrier in an essential way. The barrier construction allows the critical regularity proven in Proposition \ref{PRCRITREG} to propagate throughout an open neighborhood of every point $x\not\in E$, which is essential for showing regularity within an open set (in the sense of $u(x,t)$ having two continuous spatial derivatives). 

\begin{theorem}
Let $x\not\in E$. There exists $j_1(x)\in \mathbb{Z}$ such that 
$$u_Q(t)<  2^{-j \rho(Q)/2}$$
for all times $t\in [0,T]$ and for every cube $Q\subset r Q_{j_1}(x)$ at any level $j$. Here $r:=r(x,j_1)\in (2^{-(1-\epsilon)},1)$ is the number found in the proof of Lemma \ref{BARRIER}, 
$$\rho(Q) := n+2-4\alpha + \min(2n+7, \epsilon \delta(Q)/10 ),$$
and $\delta(Q)$ denotes the smallest integer $k$ such that $Q_{j-k}$ intersects $\partial(r Q_{j_1}(x))$.
\end{theorem}
\begin{proof}
First we observe that $x\not \in E$ implies that there exists some integer $j_2\geq 0$ such that $x\not \in Q$ for any $Q\in \mathcal{B}_j$ and for any $j\geq j_2$. Recall that $j_0$ is the integer found in the proof of Proposition \ref{PRCRITREG}. Suppose the claim of the Theorem is not true, then for any $j_1\geq (\max(j_0, j_2)+10)/\delta^2$ (we assume that $j_1$ is much larger than $j_0$ in order to apply Proposition \ref{PRCRITREG} on dilations of cubes) we can let $t_0>0$ be the first time such that
\begin{equation}\label{CONTRA}u_{Q'}(t)\leq 2^{-k\rho(Q')/2}\end{equation}
for all $t\in [0,t_0]$ and all cubes $Q'\subset rQ_{j_1}(x)$ at any level $k$ while there exists some $j$ and some cube $Q\subset rQ_{j_1}$ at level $j$ such that 
\begin{equation}
\label{CONTRA2}
u_Q(t_0)> 2^{-j\rho(Q)/2}.
\end{equation}
That Equation (\ref{CONTRA2}) holds follows from continuity of the energy integral proven in Lemma \ref{Integral1}, also compare with Equation (3.32) in \cite{O}. Once again, let $t_1$ be the largest time such that $u_Q(t_1)= \frac{1}{2}2^{-j\rho(Q)/2}$, so that
\begin{equation}
\label{CONTRA3}\frac{1}{2} 2^{-j\rho(Q)/2} \leq u_Q(t) \leq 2^{-j\rho(Q)/2}\end{equation}
for all $t\in [t_1,t_0]$. 
Recall that our estimate in Equation (\ref{BIGEST2}) states that for any $\delta>0$:
$$\abs{ \langle-\hat{B}(u,u),P_j \phi_{Q,j}^2P_j u\rangle} \leq K2^{j(1+\frac{n\delta}{2}+\frac{n\gamma}{3})}u_{\mathcal{N}^1(Q)}u_{Q}+$$ $$+K\sum_{k=\delta j}^{j-10} 2^{\frac{nk}{2}+j +\frac{n\gamma j}{3}}u_{Q_k}u_{\mathcal{N}^1(Q)}u_{Q'}+ K \sum_{k>j+10} 2^{nj(\frac{1}{2}-\frac{2\gamma}{3}) +k(n\gamma +1)}u_{Q'}\|\phi_{Q_j,j}P_k u\|_2^2 +$$ $$+K2^{\frac{(n+2)j}{2}+\frac{n\gamma j}{3}}u_{Q}\|\phi_{Q_j,j}P_{j-10\leq k \leq j+10}u\|^2_{L^2}+K2^{-150j}.$$
We thus conclude from Equation (\ref{COEFFODE}) and Equation (\ref{BIGEST1}) that
$$\frac{3}{4} 2^{-j\rho(Q)} =  u_Q^2(t_0)- u_Q^2(t_1)  \leq \int_{t_1}^{t_0}\bigg( K2^{j(1+\frac{n\delta}{2}+\frac{n\gamma}{3})}u_{\mathcal{N}^1(Q)}u_{Q}+$$ $$+K\sum_{k=\delta j}^{j-10} 2^{\frac{nk}{2}+j +\frac{n\gamma j}{3}}u_{Q_k}u_{\mathcal{N}^1(Q)}u_{Q}+ K \sum_{k>j+10} 2^{nj(\frac{1}{2}-\frac{2\gamma}{3}) +k(n\gamma +1)}u_{Q}\|\phi_{Q_j,j}P_k u\|_2^2 +$$ $$+K2^{\frac{(n+2)j}{2}+\frac{n\gamma j}{3}}u_{Q}\|\phi_{Q_j,j}P_{j-10\leq k \leq j+10}u\|^2_{L^2}\bigg)dt- \int_{t_1}^{t_0}K2^{2\alpha j} u_Q^2dt +$$ \begin{equation}\label{BIGEST200}+\int_{t_1}^{t_0} K 2^{(2\alpha-\epsilon)j}u_{\mathcal{N}^1(Q)}^2 +K2^{-150j} .\end{equation}

We first claim that $\delta(Q)\geq 11$. Otherwise if $\delta(Q)\leq 10$ (which implies $\rho(Q)\leq  n+2-4\alpha+\epsilon$), then there exists some $k_0\leq 10$ such that the cube $Q_{j-k_0}$ intersects the barrier $\partial(rQ_{j_1}(x))$. It follows then that the larger cubes $Q_{k}$ also intersect the barrier for all $k\in [\delta j, j-10]$. By Proposition \ref{PRCRITREG}, and since $\delta j > j_0$, we conclude that 
$$u_Q(t_0) < 2^{-j(n+2-4\alpha+\epsilon)/2}\leq 2^{-j\rho(Q)/2},$$
which contradicts Equation (\ref{CONTRA2}). We take note of one consequence of the fact that $\delta(Q)>10$ separately in 
\begin{equation}
\label{FAR}
\rho(Q) \geq n+2-4\alpha +\epsilon.
\end{equation}
Having shown that the cube $Q$ is necessarily "far" from the barrier, we proceed to show a contradiction to our contradiction hypothesis.
First we show that large dilations of $Q$ have critical regularity, or that:
\begin{equation}\label{LARGEQ}u_{Q_k}(t)< K2^{-(n+2-4\alpha+\epsilon)k/2}\end{equation}
for all times $t<t_0$ and for all $k\in [\delta j, j-5]$. If $\delta(Q_k)\geq 11$, then $Q_k \subset rQ_{j_1}(x)$ and $\rho(Q_k)\geq n+2-4\alpha+\epsilon$. Indeed, in this case, the smallest dilation of $Q_k$ that still intersects the barrier is at worst $Q_{k-11}$, so the original cube $Q_k$ remains within the cube $rQ_{j_1}$. But then by Equation (\ref{CONTRA}) we conclude, when $\delta(Q_k)\geq 11$:
$$u_{Q_k}(t)< 2^{-k\rho(Q_k)/2}< K2^{-(n+2-4\alpha+\epsilon)k/2}.$$
In the case when $\delta(Q_k)\leq 10$, we argue as before and get that the larger cubes $Q_{l-10}$ intersect the barrier for all $l\in [\delta k, k]$, so by Proposition \ref{PRCRITREG}, and since $k\geq \delta j > j_0$, we obtain the desired critical regularity.

Next we show that cut-offs at high frequency have better than critical regularity. Let $k\geq j+10(2n+7)/\epsilon$ be arbitrary, we aim to show that 
\begin{equation}\label{HIGHFREQEST}\|\phi_{Q_j,j}P_k u\|_2< K2^{-nj} 2^{-(n+9-4\alpha)k/2}\end{equation}
for all times $t<t_0$. Recall that for $k\geq j$, $Q_k := (1+2^{-\epsilon k})Q$, so, by also choosing $j_1$ sufficiently large (i.e. $j_1>1/\epsilon$), we have that $Q_j \subset (3/2)Q$. We may thus cover $(3/2)Q$ by at most $K 2^{n(k-j)(1-\epsilon)}$ cubes at level $k$ (we denote the cover by $S_k(3Q/2)$). Therefore we estimate:
$$\|\phi_{Q_j,j}P_k u\|_2 \leq K \sum_{Q'\in S_k(3Q/2)} u_{Q'}.$$
Each $Q'\in S_k(3Q/2)$ is a cube at level $k\geq j+10(2n+7)/\epsilon$, so $Q'_j = 2^{-(j-k)(1-\epsilon)}Q'\subset Q_{j-2}$, and in particular $\delta(Q') = \delta(Q_j')+k-j\geq \delta(Q_{j-2}) +10(2n+7)/\epsilon>10(2n+7)/\epsilon$. It follows from the definition that $\rho(Q') \geq 3n+9-4\alpha$. Then we conclude:
$$\|\phi_{Q_j,j}P_k u\|_2\leq N(S_k(3Q/2))2^{-(3n+9-4\alpha)k/2}\leq K 2^{-nj} 2^{\epsilon n(j-k)} 2^{-(n+9-4\alpha)k/2}\leq 2^{-nj}2^{-(n+9-4\alpha)k/2} .$$
For the local frequencies, we would like to show for all $t<t_0$ that for all $k\in (j-5, j+10(2n+7)/\epsilon]$, we have
\begin{equation}\label{LOCALFREQEST}\|\phi_{Q_j,j}P_ku\|_{L^2}\leq K 2^{-j(\rho(Q)-\frac{2\epsilon}{5})/2}.\end{equation}
Let $j_1$ be sufficiently large so that $Q_j \subset (3/2)Q$. Let $k\in (j-5, j+10(2n+7)/\epsilon]$, and let $S_k((3/2)Q)$ be a cover of $(3/2)Q$ by cubes $Q'$ at level $k$. There are at most $K$ such cubes $Q'$, where $K$ depends only on $\epsilon$ and $n$. Moreover, by the fact that $\delta(Q)\geq 11$, we have
$$\delta(Q_k) \geq \delta(Q) +k-j \geq \delta(Q)-4 \geq 7,$$
so, since $r>2^{-(1-\epsilon)}$, we have $Q_k \subset rQ_{j_1}(x)$, $\rho(Q_k)\geq \rho(Q) -\frac{2\epsilon}{5}$, and Equation (\ref{CONTRA}) applies. Thus, we can estimate, if $k<j$,
$$\|\phi_{Q_j,j}P_ku\|_{L^2} \leq u_{Q_k}\leq 2^{-\rho(Q_k)k/2} \leq 2^{-(\rho(Q)-\frac{2\epsilon}{5})k/2} \leq K2^{-(\rho(Q)-\frac{2\epsilon}{5})j/2}.$$
If $k\geq j$, then $Q'_j \subset Q_{j-2}$, so $\delta(Q') = \delta(Q'_j) +k-j \geq \delta(Q_{j-2}) = \delta(Q)-2$, from which $\rho(Q')\geq \rho(Q)-\epsilon/5$. By Equation (\ref{CONTRA}) we conclude
$$u_{Q'} \leq 2^{-\rho(Q')k/2} \leq 2^{-j(\rho(Q)-\epsilon/5)/2}.$$
From which we conclude
$$\|\phi_{Q_j,j}P_ku\|_{L^2}\leq K 2^{-j(\rho(Q)-\epsilon/5)/2},$$
which completes the proof of Equation (\ref{LOCALFREQEST}). Now we begin estimating the terms on the right-hand-side of Equation (\ref{BIGEST200}).
Since $\alpha>(n+1)/4\geq 1$, we can assume that $\delta, \epsilon,\gamma$ are sufficiently small so that
$$1+\frac{n\delta}{2}+\frac{n\gamma}{3}+\epsilon<2<2\alpha.$$
Then, by Equation (\ref{CONTRA3}) and our estimate on the local frequencies in Equation (\ref{LOCALFREQEST}), we have
\begin{equation}\label{STEP200}\int_{t_1}^{t_0} K2^{j(1+\frac{n\delta}{2}+\frac{n\gamma}{3})}u_{\mathcal{N}^1(Q)}u_{Q}dt \leq K(t_0-t_1)2^{(2\alpha-\rho(Q)-4\epsilon/5)j}.\end{equation}
Likewise, by our estimate on the local frequencies in Equation (\ref{LOCALFREQEST}), we have
\begin{equation}\label{STEP201}K\int_{t_1}^{t_0} 2^{(2\alpha-\epsilon)j}u_{\mathcal{N}^1(Q)}^2 dt \leq K(t_0-t_1)2^{j(2\alpha-\rho(Q)-3\epsilon/5))}.\end{equation}
Similarly, by Equation (\ref{CONTRA3}), Equation (\ref{LARGEQ}), and Equation (\ref{LOCALFREQEST}), we can estimate the low-high terms:
$$K\sum_{k=\delta j}^{j-10}\int_{t_1}^{t_0} 2^{\frac{nk}{2}+j +\frac{n\gamma j}{3}}u_{Q_k}u_{\mathcal{N}^1(Q)}u_{Q}dt \leq \sum_{k=\delta j}^{j-10} \int_{t_1}^{t_0}2^{\frac{nk}{2}+j+\frac{n\gamma j}{3}}2^{-k(n+2-4\alpha+\epsilon)/2} 2^{j(-\rho(Q)+\frac{\epsilon}{5})}dt\leq $$
$$\leq \sum_{k=\delta j}^{j-10} \int_{t_1}^{t_0}2^{j+\frac{n\gamma j}{3}-\rho(Q)j+\frac{\epsilon j}{5}}  2^{-k(2-4\alpha+\epsilon)/2}dt\leq K(t_0-t_1)2^{j+\frac{n\gamma j}{3}-\rho(Q)j+\frac{\epsilon j}{5}} 2^{-j(2-4\alpha+\epsilon)/2}.$$
Where we have summed over $k$ after using that $\alpha>1$. Simplifying we have
$$K(t_0-t_1)2^{j+\frac{n\gamma j}{3}-\rho(Q)j+\frac{\epsilon j}{5}} 2^{2\alpha j} 2^{-j(2+\epsilon)/2}\leq K(t_0-t_1)2^{(2\alpha-\rho(Q)-\frac{3\epsilon}{10}+\frac{n\gamma}{3})j}.$$
We can choose $\gamma< 3\epsilon/(100n)$ so that
\begin{equation}\label{STEP202}K\sum_{k=\delta j}^{j-10}\int_{t_1}^{t_0} 2^{\frac{nk}{2}+j +\frac{n\gamma j}{3}}u_{Q_k}u_{\mathcal{N}^1(Q)}u_{Q}dt \leq  K(t_0-t_1)2^{j(2\alpha-\rho(Q)-\frac{\epsilon}{10})j} .\end{equation}
We estimate the high-high terms as follows:
$$\int_{t_1}^{t_0}K \sum_{k>j+10} 2^{nj(\frac{1}{2}-\frac{2\gamma}{3}) +k(n\gamma +1)}u_{Q}\|\phi_{Q_j,j}P_k u\|_2^2\leq$$
$$\int_{t_1}^{t_0}K \sum_{k>j+10(2n+7)/\epsilon} 2^{nj(\frac{1}{2}-\frac{2\gamma}{3}) +k(n\gamma +1)}u_{Q}\|\phi_{Q_j,j}P_k u\|_2^2+$$ $$+\int_{t_1}^{t_0}K \sum_{k>j+10}^{j+10(2n+7)/\epsilon} 2^{nj(\frac{1}{2}-\frac{2\gamma}{3}) +k(n\gamma +1)}u_{Q}\|\phi_{Q_j,j}P_k u\|_2^2 \leq$$
 $$\leq K\int_{t_1}^{t_0}2^{-\rho(Q)j/2}2^{nj(1/2-2\gamma/3)}2^{-2nj}\sum_{k>j+10} 2^{k(n\gamma +1-(n+9-4\alpha))}+$$
 $$K\int_{t_1}^{t_0}2^{-\rho(Q)j/2}2^{nj(1/2-2\gamma/3)}2^{j(n\gamma +1-\rho(Q)+\frac{2\epsilon}{5})}.$$
Since $\alpha<(n+2)/4$, we can choose $\gamma$ small enough (e.g. $\gamma<6/n$) so that the infinite sum is convergent, yielding
$$\int_{t_1}^{t_0}K \sum_{k>j+10} 2^{nj(\frac{1}{2}-\frac{2\gamma}{3}) +k(n\gamma +1)}u_{Q}\|\phi_{Q_j,j}P_k u\|_2^2\leq$$ $$\leq K(t_0-t_1)2^{-\rho(Q)j/2}2^{nj(1/2-2\gamma/3)}2^{-2nj}2^{j(n\gamma +1-(n+9-4\alpha))}+$$
 $$K(t_0-t_1)2^{-\rho(Q)j/2}2^{nj(1/2-2\gamma/3)}2^{j(n\gamma +1-\rho(Q)+\frac{2\epsilon}{5})}.$$
 Recall that by Equation (\ref{FAR}), we know $-\rho(Q) \leq -(n+2-4\alpha+\epsilon)$. It follows that (after choosing $\gamma<3\epsilon/(100n)$):
  $$K2^{-\rho(Q)j/2}2^{nj(1/2-2\gamma/3)}2^{j(n\gamma +1-\rho(Q)+\frac{2\epsilon}{5})} \leq K2^{(2\alpha -\rho(Q) -\epsilon/20)j}.$$
Recall that (by definition) $\rho(Q)\leq 3n+9-4\alpha$, so that (choosing $\gamma<3/n$ and $\epsilon<n+5/2$)
$$K2^{-\rho(Q)j/2}2^{nj(1/2-2\gamma/3)}2^{-2nj}2^{j(n\gamma +1-(n+9-4\alpha))} \leq 2^{j(-\rho(Q)+\rho(Q)/2+n/2+n\gamma/3-2n+1-n-(9-4\alpha))j}\leq$$
$$\leq K 2^{j(-\rho(Q)+3n/2+(9-4\alpha)/2-5n/2+n\gamma/3-(8-4\alpha))j}\leq K2^{j(-\rho(Q)-n+2\alpha-7/2+n\gamma/3)}\leq K2^{j(2\alpha-\rho(Q)-\epsilon)}. $$
We can now conclude that:
\begin{equation}\label{STEP203}\int_{t_1}^{t_0}K \sum_{k>j+10} 2^{nj(\frac{1}{2}-\frac{2\gamma}{3}) +k(n\gamma +1)}u_{Q}\|\phi_{Q_j,j}P_k u\|_2^2\leq K(t_0-t_1)2^{j(2\alpha-\epsilon/20-\rho(Q))}.\end{equation}
The estimate for the local terms proceeds as:
$$\int_{t_1}^{t_0} K2^{\frac{(n+2)j}{2}+\frac{n\gamma j}{3}}u_{Q}\|\phi_{Q_j,j}P_{j-10\leq k \leq j+10}u\|_{L^2}^2 dt \leq K\int_{t_1}^{t_0}2^{j(-\rho(Q)-\rho(Q)/2+\frac{n+2}{2}+\frac{n\gamma}{3}+\frac{2\epsilon}{5})}dt\leq$$
$$\leq K\int_{t_1}^{t_0}2^{j(-\rho(Q)-\frac{n}{2}-1+2\alpha-\frac{\epsilon}{2}+\frac{n+2}{2}+\frac{n\gamma}{3}+\frac{2\epsilon}{5})}\leq K(t_0-t_1)2^{j(2\alpha-\rho(Q)-\frac{\epsilon}{10}+\frac{n\gamma}{3})},$$
where we have used Equation (\ref{FAR}). Choosing $\gamma<3\epsilon/(100n)$, we conclude 
\begin{equation}\label{STEP204}\int_{t_1}^{t_0} K2^{\frac{(n+2)j}{2}+\frac{n\gamma j}{3}}u_{Q}\|\phi_{Q_j,j}P_{j-10\leq k \leq j+10}u\|_{L^2}^2 dt \leq K(t_0-t_1)2^{(2\alpha-\rho(Q)-\epsilon/20)j}.\end{equation}
From Equation (\ref{STEP200}), Equation (\ref{STEP201}), Equation (\ref{STEP202}), Equation (\ref{STEP203}), and Equation (\ref{STEP204}), we know, choosing $j_1$ large enough and $\epsilon$ small enough:
$$\frac{3}{4}2^{-j\rho(Q)} \leq K^{-150j}-K\int_{t_1}^{t_0} 2^{2\alpha j}u_Q^2dt + K2^{(2\alpha-\epsilon/20-\rho(Q))j}.$$
Since $u_Q\geq 2^{-\rho(Q)j/2}/2$ on the domain of time-integration, we can choose $j_1$ large enough so that 
$$\frac{3}{4}2^{-j\rho(Q)} \leq -K(t_0-t_1) 2^{(2\alpha -\rho(Q))j}\big(1-K2^{-\epsilon j/20}\big)<0$$
which implies $\frac{3}{4}<0$, a contradiction.
\end{proof}

\begin{corollary}
\label{ACTUALREG}
Let $x\not\in E$. There exists a neighborhood $U$ of $x$ such that $\|u(t)\|_{C^\infty(U)}$ is bounded for all $t\in [0,T]$.
\end{corollary}
\begin{proof}
By changing the definition of $\rho(Q)$, we can get arbitrarily stronger control on the energy away, as long as we are far from the barrier, which would allow us to conclude arbitrarily high regularity in the proof that now follows. It thus suffices to show here that, for example, there exists a neighborhood $U$ of $x$ such that $\|u(t)\|_{C^2(U)}$ is bounded for all $t\in [0,T]$. We have proven that there exists $j_1$ and $r$ such that $u_Q(t)\leq 2^{-j\rho(Q)/2}$ for all $t\in [0,T]$ and for all cubes $Q\subset rQ_{j_1}(x)$ at level $j$. Let $j_2$ be the smallest integer such that $\delta(Q)\geq 10(2n+7)/\epsilon$ for all cubes $Q\subset Q_{j_2}$ at level $j$, so that $\rho(Q)\geq 3n+9-4\alpha$ for any such cube $Q$ at level $j$. Let $U:= Q_{j_2+2}(x)$. 
Let $S_j(U)$ be a cover of $U$ by cubes at level $j$. Then for $j\geq j_2$:
$$\| P_j u\|_{C^2(U)} \leq \|P_j u\|_{W^{3+n/2,2}(U)}\leq 2^{j(3+n/2)}\|P_j u\|_{L^2}\leq  2^{j(3+n/2)}\sum_{Q\in S_j(U)} \|\phi_Q P_j u\|_{L^2} \leq$$ $$\leq K 2^{nj(1-\epsilon)} 2^{nj/2+3j} u_Q\leq 2^{(n(1-\epsilon)+n/2+3-\rho(Q))j/2}\leq K2^{-j/2}.$$
Thus, summing over $j$, we have
$$\|u\|_{C^2(U)} \leq K(j_2).$$

\end{proof}

Corollary \ref{ACTUALREG}, the fact that $\mathcal{H}(E)\leq n+2-4\alpha+\epsilon$, and the arbitrary smallness of $\epsilon$ finishes the proof of Theorem \ref{PRMAINMAIN}.

\section{Estimates for the Local Energy}
\subsection{Estimate for the Dissipation Term}
We first consider the dissipation term in Equation (\ref{COEFFODE}).
\begin{proposition}
\label{dissipationestimate}
Let $Q$ be a cube at level $j\geq 0$. We have:
$$\langle(-\Delta)^\alpha u , P_j \phi_{Q,j}^2 P_j u \rangle \quad \geq -K2^{-200j}+K2^{2\alpha j}u_Q^2 -K 2^{(2\alpha-\epsilon)j}u_{\mathcal{N}^1(Q)}^2$$
\end{proposition}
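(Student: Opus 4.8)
The plan is to isolate the "main term" coming from the dissipative operator acting on the localized wavelet $\phi_{Q,j}P_j u$, and to show that all the commutator corrections — generated by moving $\phi_{Q,j}$ across $(-\Delta)^\alpha$ and across the Littlewood–Paley projections $P_j$ — are controlled either by the energy dissipation law (hence by the harmless power $2^{-100j}$) or by wavelet coefficients at neighboring scales, i.e. the $\mathcal{N}^1(Q)$ term. First I would write $\langle(-\Delta)^\alpha u, P_j\phi_{Q,j}^2 P_j u\rangle = \langle(-\Delta)^\alpha P_j u, \phi_{Q,j}^2 P_j u\rangle$ using self-adjointness of $P_j$, and then commute $(-\Delta)^\alpha$ past the multiplication by $\phi_{Q,j}$: on the frequency band $|\xi|\sim 2^j$ supporting $P_j u$, the symbol $|\xi|^{2\alpha}$ is $\approx 2^{2\alpha j}$, so we expect
$$
\langle(-\Delta)^\alpha P_j u, \phi_{Q,j}^2 P_j u\rangle \approx 2^{2\alpha j}\|\phi_{Q,j}P_j u\|_2^2 = 2^{2\alpha j}u_Q^2,
$$
which is exactly the positive main term. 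Making this rigorous means writing $(-\Delta)^\alpha P_j = 2^{2\alpha j}(P_j + R_j)$ where $R_j$ is a smoothing-type operator at this scale, or more carefully treating $\phi_{Q,j}(-\Delta)^\alpha\phi_{Q,j}$ as a pseudodifferential operator with principal symbol $|\xi|^{2\alpha}\phi_{Q,j}^2$ and a lower-order remainder; since $\phi_{Q,j}$ is a bump of type $j$ with derivative bounds $2^{|\alpha|j(1-\epsilon)}$, each commutator with $\phi_{Q,j}$ costs $2^{j(1-\epsilon)}$ rather than $2^j$, which is the source of the $\epsilon$-gain in the error term $2^{(2\alpha-\epsilon)j}$.

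The key steps, in order, would be: (1) use self-adjointness of $P_j$ and insert $\tilde P_j$ (which acts as the identity on the range of $P_j$) to localize everything to the frequency band $2^j$; (2) replace $(-\Delta)^\alpha$ on this band by $2^{2\alpha j}$ times a bounded Fourier multiplier, writing the difference $(-\Delta)^\alpha \tilde P_j - 2^{2\alpha j}\tilde P_j$ as an operator that is still frequency-localized and bounded on $L^2$ with a $2^{2\alpha j}$-times-small norm — more precisely, I would use a commutator lemma from Section 5 to write $\phi_{Q,j}(-\Delta)^\alpha = (-\Delta)^\alpha\phi_{Q,j} + E_{Q,j}$, where $E_{Q,j}$ maps the $2^j$-band into a sum of bands near $2^j$ with operator norm $\lesssim 2^{(2\alpha-1)j}2^{j(1-\epsilon)} = 2^{(2\alpha-\epsilon)j}$; (3) collect the main term $2^{2\alpha j}\|\phi_{Q,j}P_j u\|_2^2$, note that $\phi_{Q,j}^2$ can be absorbed since $0\le\phi_{Q,j}\le 1$, so positivity is retained; (4) bound the commutator errors: the $L^2$ pairing of an $E_{Q,j}$-type error with $\phi_{Q,j}^2 P_j u$ is controlled by Cauchy–Schwarz by $2^{(2\alpha-\epsilon)j}$ times a product of $L^2$ norms of Littlewood–Paley pieces of $u$ localized near $Q$ at scales $j-3,\dots,j+2$ — which is precisely what the cubes in $\mathcal{N}^1(Q)$ encode, giving the $\sum_{Q'\in\mathcal{N}^1(Q)}u_{Q'}^2$ term after applying the Section 5 lemma that converts "$\|\phi\,P_k u\|_2$ near $Q$" into "$u_{Q'}$ for $Q'$ in the nuclear family"; (5) any genuinely off-diagonal tails — coming from the fast decay of the kernel of $P_j$ and the symbol of $(-\Delta)^\alpha$ away from the band, interacting with the compactly supported $\phi_{Q,j}$ — decay faster than any polynomial in $2^j$, and are swept into $-K 2^{-100j}$ using the energy bound $\|u(t)\|_2\le\|u_0\|_2$.

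I expect the main obstacle to be step (4): making the bookkeeping of the commutator $[\phi_{Q,j}, (-\Delta)^\alpha]$ precise when $\alpha$ is fractional, since $(-\Delta)^\alpha$ is nonlocal and the naive Leibniz-rule heuristic $[\phi,(-\Delta)^\alpha]\sim (\nabla\phi)\cdot\nabla(-\Delta)^{\alpha-1}$ is only the leading term of an asymptotic expansion whose remainder must be shown to be lower order at scale $2^j$. The clean way around this is to stay in the $OPS^m_{1,1}$ pseudodifferential calculus of Section 5.1: since $\phi_{Q,j}$ has symbol bounds of type $(1-\epsilon)$ and $(-\Delta)^\alpha$ has symbol $|\xi|^{2\alpha}$, their composition lies in a suitable symbol class and the subprincipal terms carry the advertised $2^{-\epsilon j}$ gain, with the $L^2$-boundedness of the relevant operators guaranteed by the calculus. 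The only subtlety is the endpoint nature of the $(1,1)$-class, but since we only ever need $L^2\to L^2$ bounds on frequency-localized pieces (where $(1,1)$ operators are bona fide bounded), this does not cause trouble. Once the commutator structure is pinned down, the rest is Cauchy–Schwarz and invoking the already-proven Section 5 lemmas relating localized Littlewood–Paley norms to nuclear-family wavelet coefficients.
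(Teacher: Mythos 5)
Your proposal follows essentially the same route as the paper's proof: split the pairing into the symmetrized main term $\langle(-\Delta)^\alpha \phi_{Q,j}P_j u,\phi_{Q,j}P_j u\rangle$ plus a commutator, insert $\tilde P_j$ to exploit frequency localization and bound the symbol $\abs{\xi}^{2\alpha}$ from below by $K2^{2\alpha j}$ on the band, and control the commutator $[(-\Delta)^\alpha,\phi_{Q,j}P_j]$ via the pseudodifferential calculus — your computation $2^{(2\alpha-1)j}\cdot 2^{j(1-\epsilon)}=2^{(2\alpha-\epsilon)j}$ is exactly the paper's observation that this commutator lies in $OPS^{2\alpha-\epsilon}_{1,1-\epsilon}$ — before finishing with Cauchy--Schwarz and the lemma converting localized Littlewood--Paley norms into nuclear-family coefficients. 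The approach and all key ingredients match; no gaps.
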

\begin{proof}
We have:
 \begin{equation}
\label{32EQN}
\langle(-\Delta)^\alpha u , P_j \phi_{Q,j}^2 P_j u \rangle =\langle(-\Delta)^\alpha \phi_{Q,j}P_j u , \phi_{Q,j}P_j u \rangle+\langle[(-\Delta)^\alpha,\phi_{Q,j}P_j] u , \phi_{Q,j} P_j u \rangle.\end{equation}
It remains to estimate the two terms on the right-hand-side of Equation (\ref{32EQN}).
We begin by using Lemma \ref{commutator1} to state:
\begin{equation}\label{300EQN}\abs{\langle(-\Delta)^\alpha \phi_{Q,j}P_j u , \phi_{Q,j}P_j u \rangle -\langle (-\Delta)^\alpha \tilde{P}_j \phi_{Q,j} P_j u, \phi_{Q,j}P_j u \rangle } \leq K 2^{-400j}\end{equation}
and
\begin{equation}\label{3000EQN}\abs{\langle \phi_{Q,j}P_j u , \phi_{Q,j}P_j u \rangle -\langle  \tilde{P}_j \phi_{Q,j} P_j u, \phi_{Q,j}P_j u \rangle } \leq K 2^{-400j}.\end{equation}
Then, because of the localization in frequency space, we have
$$\langle (-\Delta)^\alpha \tilde{P}_j \phi_{Q,j} P_j u, \phi_{Q,j}P_j u \rangle =$$ \begin{equation}\label{400EQN}=\int \abs{\xi}^{2\alpha}\tilde{p}_j(\xi)\abs{\mathcal{F}(\phi_{Q,j}P_j u)(\xi)}^2d\xi \geq K 2^{2\alpha j}\langle \tilde{P}_j \phi_{Q,j}P_j u, \phi_{Q,j}P_j u\rangle.\end{equation}
Using Equation (\ref{300EQN}), Equation (\ref{3000EQN}), and Equation (\ref{400EQN}) we conclude that
\begin{equation}
\label{Step1}\langle(-\Delta)^\alpha \phi_{Q,j}P_j u , \phi_{Q,j}P_j u \rangle \quad \geq -K2^{-200j} + 2^{2\alpha j} u_Q^2.\end{equation}
Now we turn to the second term on the right-hand-side of Equation (\ref{32EQN}). 

First, by Lemma \ref{pseudoproduct}, we have $[(-\Delta)^\alpha,\phi_j P_j] \in OPS^{2\alpha-\epsilon}_{1,1-\epsilon}$. Thus, the symbol of this operator, which we denote by $q(x,\xi)$, satisfies
$$\abs{q(x,\xi)} \leq K((1+\abs{\xi}^2)^{1/2})^{2\alpha-\epsilon}.$$
Because of this bound and the localization of frequency, we get, proceeding as above:
$$\abs{\langle[(-\Delta)^\alpha,\phi_jP_j] u , \phi_{Q,j} P_j u \rangle}  \leq K (2^{(2\alpha-\epsilon)j}+1)\abs{\langle u , \phi_{Q,j} P_j u \rangle}\leq K 2^{(2\alpha-\epsilon)j}\abs{\langle u , \phi_{Q,j} P_j u \rangle}.$$
Letting $\tilde{Q}=(1+2^{-\epsilon j})Q$, we conclude using Lemma \ref{commutator1} (to commute Littlewood-Paley projections), Lemma \ref{bumpcommute} (to commute bump functions), the energy dissipation law, and the Cauchy-Schwartz inequality that:
$$\abs{\langle[(-\Delta)^\alpha,\phi_jP_j] u , \phi_{Q,j} P_j u \rangle} \leq  K 2^{(2\alpha-\epsilon)j}\abs{\langle u,\phi_{\tilde{Q},j}\tilde{P}_j \phi_{Q,j}P_j u \rangle} +K2^{-150j}\leq $$
$$\leq K 2^{(2\alpha-\epsilon)j}\|\phi_{\tilde{Q},j}\tilde{P}_j u\|_2\cdot u_{Q}+K2^{-150j} \leq K 2^{(2\alpha-\epsilon)j}\|\phi_{\tilde{Q},j}\tilde{P}_j u\|_2^2+K2^{-150j} .$$
Finally using Lemma \ref{collectbound}, we have
\begin{equation}
\label{Step2}
\abs{\langle[(-\Delta)^\alpha,\phi_jP_j] u , \phi_{Q,j} P_j u \rangle }\leq  K2^{(2\alpha-\epsilon)j}u_{\mathcal{N}^1(Q)}^2+K2^{-150j}.\end{equation}
Combining Equations (\ref{32EQN}), (\ref{Step1}), and (\ref{Step2}) yields the desired inequality.
\end{proof}

\subsection{Estimate for the Nonlinear Term}
The next term we consider from Equation (\ref{COEFFODE}) is 
$$\langle-\hat{B}(u,u),P_j \phi_{Q,j}^2 P_j u \rangle \quad=\quad\langle-\phi_{Q,j}P_j \hat{B}(u,u),\phi_{Q,j}P_j u \rangle .$$
We shall use a scheme for estimating multilinear functions that is inspired by the Bony-Coifman-Meyer paradifferential calculus (for the details of which, see \cite{BONY}, \cite{COIF}, and \cite{TA}). It involves the following partition of frequency space, which is also used by the authors of \cite{KP} and \cite{O}. We may write
$$P_j\hat{B}(u,u) = H_{j,lh}+H_{j,hl}+H_{j,hh}+H_{j,loc}$$
where
$$\textrm{low-high frequncies:}\quad H_{j,lh} = \sum_{k<j-10} P_j \hat{B}(P_k u, \tilde{P}_j u)$$
$$\textrm{high-low:}\quad H_{j,hl} = \sum_{k<j-10} P_j \hat{B}(\tilde{P}_j u, P_k u)$$
$$\textrm{high-high:}\quad H_{j,hh} = \sum_{k>j+10} P_j \hat{B}(\tilde{P}_k u, P_k u)+\sum_{k>j+10} P_j \hat{B}(P_k u,\tilde{P}_k u)$$
$$\textrm{local:}\quad H_{j,loc} = \sum_{j-10<k<j+10} P_j \hat{B}(\tilde{P}_k u, P_k u)+\sum_{j-10<k<j+10} P_j \hat{B}(P_k u, \tilde{P}_k u).$$
For the rest of this subsection we fix a cube $Q$ at level $j$ and introduce the following notation:
$$k<j \Rightarrow Q_k := 2^{(j-k)(1-\epsilon)}Q,\quad k\geq j\Rightarrow Q_k = (1+2^{-\epsilon k})Q.$$ 
We first estimate the low-high frequency terms:
\begin{lemma}
\label{lowhigh}
There exists a constant $K>0$ such that for any arbitrary $\delta>0$ and $j\in\mathbb{Z}^+$ we have:
$$\abs{\langle-\phi_{Q,j}H_{j,lh},\phi_{Q,j}P_j u  \rangle } \leq$$ $$K2^{j(1+\frac{n\delta}{2}+\frac{n\gamma}{3})}u_{\mathcal{N}^1(Q)}u_Q+K\sum_{k=\delta j}^{j-10} 2^{\frac{nk}{2}+j +\frac{n\gamma j}{3}}u_{Q_k}u_{\mathcal{N}^1(Q)}u_Q+K2^{-150j}.$$
\end{lemma}
\begin{proof}
We first deal with the case when $\delta j \leq k <j-10$ and consider a term in the sum $H_{j,lh}$:
$$\langle-\phi_{Q,j}P_j \hat{B}(P_k u, \tilde{P}_j u), \phi_{Q,j}P_j u \rangle .$$
Now by an application of Lemma \ref{commutator1}, we know that for any $f\in L^2$:
\begin{equation}\label{firstthing}\|\phi_{Q,j}P_jf-\tilde{P}_j\phi_{Q,j}P_jf\|_2 \leq K2^{-200j}.\end{equation}
Since $B$ is an amenable bilinear operator, we have by assumption that $B^1_v(u)=\hat{B}(u,v)$ and $B^2_v(u)=\hat{B}(v,u)$ are pseudodifferential operators with symbols in some class $S^m_{1,1}$. Thus by an application of Lemma \ref{quantbound}, also using Equation (\ref{firstthing}), we see:
\begin{equation} \label{secondthing} \|\phi_{Q_j,j}P_j\hat{B}(P_ku,\tilde{P}_j u)-\phi_{Q_j,j}P_j\hat{B}(\phi_{Q_k,k}P_k u,\tilde{P}_ju)\|_2\leq K2^{-200j}.\end{equation}
We may therefore write
$$\langle-\phi_{Q,j}P_j \hat{B}(P_k u, \tilde{P}_j u), \phi_{Q,j}P_j u \rangle  \quad =\quad \langle\phi_{Q,j}P_j\hat{B}(\phi_{Q_k,k}P_k u, \tilde{P}_j u),\phi_{Q,j}P_j u \rangle +R_{j,k}$$
where $R_{j,k}$ is an error term with absolute value less than $K2^{-195j}$ for all $k$. In a similar fashion, we can commute bump functions to get:
$$\abs{\langle-\phi_{Q,j}P_j \hat{B}(P_k u,\tilde{P}_j u), \phi_{Q,j}P_j u \rangle } \leq$$
\begin{equation}
\label{est0} \abs{\langle\phi_{Q,j}P_j\hat{B}(\phi_{Q_k,k}P_k u, \phi_{Q_j,j}\tilde{P}_j u),\phi_{Q,j}P_j u \rangle }+K2^{-180j}.\end{equation}
We therefore have the estimate:
$$\abs{\langle\phi_{Q,j}P_j\hat{B}(\phi_{Q_k,k}P_k u, \phi_{Q_j,j}\tilde{P}_j u),\phi_{Q,j}P_j u \rangle }\leq K \|\hat{B}(\phi_{Q_k,k}P_k u, \phi_{Q_j,j}\tilde{P}_j u)\|_2 \cdot u_Q.$$
Since $B$ is an amenable bilinear operator, we use the property in Equation (\ref{amenineq}) and get
$$\abs{\langle\phi_{Q,j}P_j\hat{B}(\phi_{Q_k,k}P_k u, \phi_{Q_j,j}\tilde{P}_j u),\phi_{Q,j}P_j u \rangle }\leq$$ \begin{equation}\label{est} u_Q\cdot \|\phi_{Q_k,k}P_k u\|_{\infty}(\|\nabla\phi_{Q_j,j}\tilde{P}_j u\|_{q(\gamma)}+\|\phi_{Q_j,j}\tilde{P}_j u\|_{q(\gamma)}),\end{equation}
where $q(\gamma):= 6/(3-2\gamma)$. For the rest of this proof we refer to $q(\gamma)$ by $q$ only. Now, we use Lemma \ref{ineq2} to get:
\begin{equation}\label{est1}\|\phi_{Q_k,k}P_k u\|_\infty \leq K 2^{nk/2}u_{Q_k} + K2^{-(250/\delta)k}\leq K 2^{nk/2}u_{Q_k} + K2^{-250j}\end{equation}
\begin{equation}\label{est2}\|\phi_{Q_j,j}\tilde{P}_j u\|_q \leq K2^{nj(1/2-1/q)}u_{\mathcal{N}^1(Q)}+K2^{-250j}=K2^{\frac{n\gamma j}{3}}u_{\mathcal{N}^1(Q)}+K2^{-250j}.\end{equation}
We recall that derivatives of a Sobolev-smoothing operator remain Sobolev smoothing, so we may use the "commutator" lemmas of Section 5 "within derivatives" as well. Now, to deal with the gradient term, we use $\tilde{P}_jP_j=P_j$ and Lemma \ref{commutator1} to commute Littlewood-Paley projections and get:
$$\nabla(\phi_{Q_j,j}\tilde{P}_j u ) = \nabla(P_j \phi_{Q_j,j}\tilde{P}_j u ) +R_j = P_j \nabla (\phi_{Q_j, j} \tilde{P}_j u) +R_j,$$ 
where $R_j$ is an error term of absolute value at most $K2^{-300j}$. By Lemma \ref{finiteband1}, we thus have
\begin{equation}\label{est3}\|\nabla(\phi_{Q,j}\tilde{P}_j u )\|_q \leq 2^{j+\frac{n\gamma j}{3}}u_{\mathcal{N}^1(Q)}+K2^{-200j}.\end{equation}

Combining our estimates in Equations (\ref{est0}), (\ref{est}), (\ref{est1}), (\ref{est2}), (\ref{est3}), we get:
$$\abs{\langle-\phi_{Q,j}P_j \hat{B}(P_k u,\tilde{P}_j u), \phi_{Q,j}P_j u \rangle } \leq 2^{nk/2+j+\frac{n\gamma j}{3}}u_{Q_k}u_{\mathcal{N}^1(Q)}u_Q +K2^{-150j}.$$
Summing over $\delta j\leq k<j-10$ and recognizing that there are only around $j$ terms gets us the first term with the sum.
For the case when $k<\delta j$, we directly use the inequality satisfied by $\hat{B}$, our commutator estimates from Section 5, and the energy dissipation law to get:
$$\abs{\langle-\phi_{Q,j}P_j \hat{B}(P_k u,\tilde{P}_j u), \phi_{Q,j}P_j u \rangle } \leq $$
$$\leq2^j u_Q \|P_k u\|_\infty \|\phi_{Q_j,j}\tilde{P}_ju\|_q+K2^{-150j}\leq K2^{j+n\delta j/2+\frac{n\gamma j}{3}}u_Q u_{\mathcal{N}^1(Q)}+K2^{-150j},$$
which finishes the proof.
\end{proof}
By symmetry of $\hat{B}$, the same result holds for the high-low terms:
\begin{lemma}
\label{highlow}
There exists a constant $K>0$ such that for any arbitrary $\delta>0$ and $j\in\mathbb{Z}^+$ we have:
$$\abs{\langle-\phi_{Q,j}H_{j,hl},\phi_{Q,j}P_j u  \rangle } \leq$$ $$ K2^{j(1+\frac{n\delta}{2}+\frac{n\gamma}{3})}u_{\mathcal{N}^1(Q)}u_Q+K\sum_{k=\delta j}^{j-10} 2^{\frac{nk}{2}+j +\frac{n\gamma j}{3}}u_{Q_k}u_{\mathcal{N}^1(Q)}u_Q+K2^{-150j}.$$
\end{lemma}
Our next effort is towards estimating the high-high frequency terms. 

\begin{lemma}
\label{highhigh}
For some constant $K$ and for all $j\in \mathbb{Z}^+$ we have
$$\abs{\langle \phi_{Q,j}H_{j, hh} , \phi_{Q,j}P_j u\rangle }\leq K \sum_{k>j+10}  2^{nj(\frac{1}{2}-\frac{2\gamma}{3})+\gamma j +k(n\gamma +1)}u_Q\|\phi_{Q_j,j}P_k u\|_2^2 + K2^{-150j}.$$
\end{lemma}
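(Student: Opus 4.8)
The plan is to follow the scheme used for the low-high term in Lemma \ref{lowhigh}, while exploiting the structural feature peculiar to the high-high regime: the output $P_j\hat B(\tilde P_k u,P_k u)$ sits at the \emph{low} frequency $2^j$ whereas both inputs live at the \emph{high} frequency $2^k$, so the output is spread at the physical scale $2^{-j}$ and can be estimated cheaply in $L^\infty$ via Bernstein. Fix $k>j+1000/\epsilon$; since $\hat B$ is symmetric it suffices to bound $T_k:=\langle \phi_{Q,j}P_j\hat B(\tilde P_k u,P_k u),\phi_{Q,j}P_j u\rangle$ for each such $k$ and then sum over $k$. First I would dualize, writing $T_k=\langle \hat B(\tilde P_k u,P_k u),\,P_j\phi_{Q,j}^2P_j u\rangle$, and note that $P_j\phi_{Q,j}^2P_j u$ is frequency-localized in the annulus $|\xi|\sim 2^j$ and physically localized near $Q$; since $0\le\phi_{Q,j}\le 1$, Bernstein gives $\|P_j\phi_{Q,j}^2P_j u\|_\infty\le K2^{3j/2}\|\phi_{Q,j}P_j u\|_2=K2^{3j/2}u_Q$.

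Next I would localize the two high-frequency factors. Using the commutator toolbox of Section 5 (Lemmas \ref{bumpcommute}, \ref{quantbound}, \ref{L2CONT}, \ref{pseudoproduct}, together with the fact that the slot-operators $\hat B^1_v,\hat B^2_v$ and $P_j$ lie in classes $OPS^m_{1,1}$ and are pseudolocal) and the physical-space localization of $P_j\phi_{Q,j}^2P_ju$ near $Q$, I would replace $\tilde P_k u$ and $P_k u$ by $\phi_{Q_j,j}\tilde P_k u$ and $\phi_{Q_j,j}P_k u$ exactly as in Equations (\ref{firstthing})--(\ref{est0}) of the proof of Lemma \ref{lowhigh} (note $\phi_{Q_j,j}\equiv 1$ on the support of $\phi_{Q,j}$, since $Q_j=(1+2^{-\epsilon j/2})Q$). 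Because the type-$j$ bump $\phi_{Q_j,j}$ is flat at the finer scale $2^{-k}$, the commutator errors gain an arbitrarily large power of $2^{j-k}$, and combined with the rapid frequency-decay of $\|P_k u\|_2$ they are summable in $k$ with total contribution $\le K2^{-150j}$. After this reduction, Hölder in the form $L^1$--$L^\infty$ gives $|T_k|\le K2^{3j/2}u_Q\,\|\hat B(\phi_{Q_j,j}\tilde P_k u,\phi_{Q_j,j}P_k u)\|_1+R_k$, where $\sum_k|R_k|\le K2^{-150j}$.

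Now I would invoke the amenable inequality (\ref{amenineq}) at the endpoint $r=1$, with $p_1=p_2=p_0:=6/(3-\gamma)>2$, to bound $\|\hat B(\phi_{Q_j,j}\tilde P_k u,\phi_{Q_j,j}P_k u)\|_1$ by $K\|\phi_{Q_j,j}\tilde P_k u\|_{p_0}\big(\|\phi_{Q_j,j}P_k u\|_{p_0}+\|\nabla(\phi_{Q_j,j}P_k u)\|_{p_0}\big)$. The localized Bernstein estimate (Lemma \ref{ineq2}) contributes $2^{3k(1/2-1/p_0)}=2^{\gamma k/2}$ on each factor, and the gradient term is handled exactly as in Equation (\ref{est3}) of Lemma \ref{lowhigh}: the contribution of $\nabla\phi_{Q_j,j}$ is of size $2^{j(1-\epsilon)}\ll 2^k$ and hence lower order, so $\|\nabla(\phi_{Q_j,j}P_k u)\|_{p_0}\le K2^{k+\gamma k/2}\|\phi_{Q_j,j}P_k u\|_2+K2^{-Nj}$. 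This yields $\|\hat B(\phi_{Q_j,j}\tilde P_k u,\phi_{Q_j,j}P_k u)\|_1\le K2^{(1+\gamma)k}\|\phi_{Q_j,j}\tilde P_k u\|_2\|\phi_{Q_j,j}P_k u\|_2+K2^{-Nj}$, hence $|T_k|\le K2^{3j/2+(1+\gamma)k}u_Q\|\phi_{Q_j,j}\tilde P_k u\|_2\|\phi_{Q_j,j}P_k u\|_2+R_k$. Since $\gamma>0$ may be assumed $\le 1/100$ (it is sent to zero in Corollary \ref{1HausBound}) and $k>j$, the crude bound $(1/100-\gamma)(j-k)\le 0$ gives $3j/2+(1+\gamma)k\le \frac{149}{100}j+\gamma j+\frac{101}{100}k$, so the per-$k$ estimate already carries the desired exponents. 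Finally I would sum over $k>j+1000/\epsilon$: writing $\|\phi_{Q_j,j}\tilde P_k u\|_2\le \sum_{|i|\le 2}\|\phi_{Q_j,j}P_{k+i}u\|_2$ and using AM--GM together with an index shift absorbs the cross terms into $\sum_k 2^{101k/100}\|\phi_{Q_j,j}P_k u\|_2^2$ at the cost of an absolute constant, while the accumulated $R_k$ are $\le K2^{-150j}$, giving the claimed inequality.

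The step I expect to be the main obstacle is the localization of the high-frequency inputs in the second paragraph: because $\hat B^1_v,\hat B^2_v$ are only assumed to lie in $OPS^m_{1,1}$ — the borderline class for which composition and commutator estimates are delicate — one must check carefully, via the quantitative pseudodifferential lemmas of Section 5, that inserting the bump $\phi_{Q_j,j}$ produces errors which are not merely small but also \emph{summable} over the infinitely many scales $k>j+1000/\epsilon$ (unlike in Lemma \ref{lowhigh}, where only $O(j)$ scales occurred). Everything downstream — the $L^1$--$L^\infty$ Hölder step, the endpoint amenable inequality, and the Bernstein estimates — is routine once that reduction is secured.
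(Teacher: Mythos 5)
Your proof is correct, and the overall scheme — localize both high-frequency inputs by inserting $\phi_{Q_j,j}$ via the Section 5 commutator lemmas, then bound the resulting trilinear expression by combining the amenable inequality with localized Bernstein estimates — is the same as the paper's. The one genuine (but minor) difference is the distribution of Hölder exponents. The paper first applies the amenable inequality to $\hat B$ in $L^{r'}$ and then pairs against $\|\phi_{Q,j}P_j u\|_r$ with $1/p_1 = 1/2$, $1/p_2 = 1/2 - 1/300$, $1/r = 1/300 - \gamma/3$; you instead use the $L^1$--$L^\infty$ dual pairing, take the amenable inequality at the endpoint $r=1$, and place both high-frequency factors in the common exponent $p_0 = 6/(3-\gamma)$. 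Your version is slightly more symmetric and produces the per-$k$ exponent $3j/2 + (1+\gamma)k$, which you correctly observe is dominated by the claimed exponent $\frac{149}{100}j + \gamma j + \frac{101}{100}k$ precisely because $k > j$ and $\gamma \le 1/100$; the paper's choice hits the claimed exponents exactly. Either choice works, and the arithmetic you give is right.

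Your closing worry — that the bump-insertion errors must be summable over the infinitely many scales $k > j + 1000/\epsilon$, not merely $O(2^{-200j})$ each — is well placed, and it is a point the paper also leaves implicit in Equation~(\ref{bound0}). The resolution is the one you hint at: applying Lemma~\ref{quantbound} with a very negative Sobolev index $\beta$ and using $\|\tilde P_k u\|_{H^\beta} \le K 2^{\beta k}\|u\|_2$ from Lemma~\ref{finiteband1} makes the error per term decay in $k$ at any prescribed rate, so the sum over $k$ stays $\le K2^{-150j}$. This is a clarification rather than a gap in your argument.
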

\begin{proof}
As usual, we examine and wish to estimate a single term in the high-high sum where $k>j+10$, which by symmetry we may write:
$$\langle \phi_{Q,j}P_j\hat{B}(\tilde{P}_k u, P_k u), \phi_{Q,j}P_j u\rangle.$$
As in the proof of Lemma \ref{lowhigh}, we can commute bump functions across our operators to get
$$\abs{\langle \phi_{Q,j}P_j\hat{B}(\tilde{P}_k u, P_k u), \phi_{Q,j}P_j u\rangle}\leq$$ \begin{equation}
\label{bound0} K\abs{\langle \phi_{Q,j}P_j\hat{B}(\phi_{Q_j,j}\tilde{P}_k u, \phi_{Q_j,j}P_k u), \phi_{Q,j}P_j u\rangle}+K2^{-200j}.\end{equation}
Since $B$ is amenable we can bound the trilinear expression:
$$
\abs{\langle \phi_{Q,j}P_j\hat{B}(\phi_{Q_j,j}\tilde{P}_k u, \phi_{Q_j,j}P_k u), \phi_{Q,j}P_j u\rangle}\leq$$ $$ (\|\phi_{Q_j,j}\tilde{P}_k u\|_{p_1}+\|\nabla\phi_{Q_j,j}\tilde{P}_k u\|_{p_1})\|\phi_{Q_j,j}P_k u\|_{p_2} \|\phi_{Q,j}P_j u\|_r 
$$
where
$$\frac{1}{p_1}+\frac{1}{p_2}+\frac{\gamma}{3}+\frac{1}{r}=1.$$
Henceforth, we can always assume that $0<\gamma<1/2$. With this assumption, we can pick $p_1,p_2,r$ so that
$$\frac{1}{p_1}=\frac{1}{2},\quad \frac{1}{p_2} = \frac{1}{2}-\gamma,\quad \frac{1}{r}= \frac{2\gamma}{3}$$
By Lemma \ref{ineq2} we know that
\begin{equation}\label{bound1}
\|\phi_{Q,j}P_j u\|_r \leq 2^{nj(\frac{1}{2}-\frac{2\gamma}{3})}u_Q +K2^{-200j}\end{equation}
and
\begin{equation}\label{bound2}
\|\phi_{Q_j,j}P_k u\|_{p_2} \leq 2^{nk\gamma}\|\phi_{Q_j,j}P_k\|_2+K2^{-200j} .\end{equation}
By the same reasoning as in the proof of Lemma \ref{lowhigh}, we get
\begin{equation}\label{bound3}
\|\nabla \phi_{Q_j,j}P_k u\|_2 \leq K2^k \sum_{l=-2}^{l=2} \|\phi_{Q_j,j}P_{k+l}u\|_2
\end{equation}
Combining our estimates from Equations (\ref{bound0}), (\ref{bound1}), (\ref{bound2}), (\ref{bound3}) yields:
$$\abs{\langle \phi_{Q,j}H_{j, hh} , \phi_{Q,j}P_j u\rangle }\leq K \sum_{k>j+10} u_Q 2^{nj(\frac{1}{2}-\frac{2\gamma}{3})+k(n\gamma+1)}\|\phi_{Q_j,j}P_k u\|_2^2 + K2^{-150j}.$$
\end{proof}
Next we deal with the "local" frequencies around level $j$:
\begin{lemma}
\label{local}
For some constant $K$ we have for all $j\in \mathbb{Z}^+$:
$$\abs{\langle \phi_{Q,j}H_{j, loc} , \phi_{Q,j}P_j u\rangle }\leq K2^{\frac{(n+2)j}{2}+\frac{n\gamma j}{3}}u_Q \|\phi_{Q_j,j}P_{j-10\leq k\leq j+10}u\|_{L^2}^2+K2^{-150j}.$$
\end{lemma}
\begin{proof}
We can explicitly write out the term we wish to bound by
$$\langle \phi_{Q,j}H_{j, loc} , \phi_{Q,j}P_j u\rangle = 2\sum_{l=-2}^{l=2}\sum_{j-10<k}^{k<j+10}\langle \phi_{Q,j}P_j\hat{B}(P_{k+l}u,P_ku),\phi_{Q,j}P_j u\rangle.$$
As in the proof of Lemma \ref{lowhigh}, we can use the triangle inequality and commute bump functions to get:
$$
\abs{\langle \phi_{Q,j}H_{j, loc} , \phi_{Q,j}P_j u\rangle} \leq$$ \begin{equation}
\label{bound01} K2^{-200j} + K\sum_{l=-2}^{l=2}\sum_{j-10<k}^{k<j+10}\abs{\langle \phi_{Q,j}P_j\hat{B}(\phi_{Q_j,j}P_{k+l}u,\phi_{Q_j,j}P_ku),\phi_{Q,j}P_j u\rangle}
\end{equation}
For appropriate choices of $k=k_0,l=l_0$ that maximize the expression in the sum in Equation (\ref{bound01}), and since our constants $K$ are allowed to depend on $\epsilon$, we have:
$$
K\sum_{l=-2}^{l=2}\sum_{j-10<k<j+10}\abs{\langle \phi_{Q,j}P_j\hat{B}(\phi_{Q_j,j}P_{k+l}u,\phi_{Q_j,j}P_ku),\phi_{Q,j}P_j u\rangle}\leq $$ \begin{equation}\label{bound05} \leq K\abs{\langle \phi_{Q,j}P_j\hat{B}(\phi_{Q_j,j}P_{k_0+l_0}u,\phi_{Q_j,j}P_{k_0}u),\phi_{Q,j}P_j u\rangle}
\end{equation}
We remark that by Lemma \ref{collectbound}:
\begin{equation}
\label{bound02}
\|\phi_{Q_j,j}P_k u\|_2 \leq \| \phi_{Q_j,j} P_{j-10\leq k \leq j+10}\|_{L^2}
\end{equation}
and by Lemma \ref{ineq2}:
\begin{equation}
\label{bound03}
\|\phi_{Q,j}P_j u\|_\infty \leq 2^{nj/2}u_Q +K2^{-200j}.
\end{equation}
As in the proof of Lemma \ref{lowhigh}, we have, where $q= 6/(3-2\gamma)$:
\begin{equation}
\label{bound04}
\|\nabla(\phi_{Q_j,j}P_{k_0+l_0} u )\|_q \leq 2^{j+\frac{n\gamma j}{3}}\| \phi_{Q_j,j} P_{j-10\leq k \leq j+10}\|_{L^2}+K2^{-200j}.
\end{equation}
Combining our estimates from Equations (\ref{bound01}), (\ref{bound05}), (\ref{bound02}), (\ref{bound03}), (\ref{bound04}), we get:
$$\abs{\langle \phi_{Q,j}H_{j, loc} , \phi_{Q,j}P_j u\rangle} \leq K2^{\frac{(n+2)j}{2}+\frac{n\gamma j}{3}}u_Q \|\phi_{Q_j,j}P_{j-10\leq k\leq j+10}u\|_{L^2}^2+K2^{-150j}.$$
\end{proof}

\section{Partial Regularity and Blowup}
In this section we deal only with functions and vector fields defined on $\mathbb{R}^3$. For this reason, the parameter $n$ does not denote the dimension of space but rather an index of summation.
\subsection{Flexibility of Blowup}
We shall assume that the reader has a familiarity with the paper \cite{T} of Tao. We first give an overview of the construction in \cite{T}. Afterwards, we show the flexibility of the blowup result therein. Indeed, Tao notes himself (in footnote 12 of \cite{T}), that his methods show blowup even when hyperdissipation approaches the critical value of $\alpha=5/4$. In this subsection we give some details showing why this is the case while also modifying the technique to prove the existence of a blowup solution to the pseudodifferential equation associated to some amenable operator. 

As before, we let $\langle, \rangle $ be the $L^2$ pairing for vector fields. Let $1<\lambda<2$. We denote by $H^{10}_{df}(\mathbb{R}^3)$ the space of vector fields on $\mathbb{R}^3$ that are divergence free in the distributional sense and whose first ten weak derivatives are square integrable.

 A basic local cascade operator is a bilinear operator $C: H_{df}^{10}(\mathbb{R}^3) \times H_{df}^{10}(\mathbb{R}^3)\to H_{df}^{10}(\mathbb{R}^3)^\ast$ defined via duality by:
$$\langle C(u,v), w \rangle \hspace{5px} =\sum_{n\in \mathbb{Z}}\lambda^{5n/2} \langle u, \psi_{1,n} \rangle \langle v, \psi_{2,n} \rangle  \langle w, \psi_{3,n} \rangle $$
where for $i\in \{1,2,3\}$,
$$\psi_{i,n}(x) = \lambda^{3n/2}\psi_i(\lambda^n x)$$
and $\psi_i$ is a Schwartz vector field whose Fourier transform is compactly supported within a small spherical shell around the unit sphere in $\mathbb{R}^3$. We also define a local cascade operator to be a finite linear combination of basic local cascade operators. Lastly, we call a basic local cascade operator a zero-momentum basic local cascade operator if its constituent Schwartz vector fields satisfy
$$\int_{\mathbb{R}^3} \psi_i dx=0.$$
Likewise, we may define a zero-momentum local cascade operator.

In \cite{T}, Tao proves the existence of a local cascade equation that admits a solution blowing up in finite time. In particular, Tao proves the following:

\begin{theorem}[\cite{T}]
Let $1<\lambda<2$ be arbitrary. Then there exists a symmetric local cascade operator $C$ and a Schwartz divergence-free vector field $u_0$ such that the cancellation identity holds
\begin{equation}
\langle C(u,u),u \rangle =0 \textrm{ for all } u \in H^{10}_{df}(\mathbb{R}^3)\\
\end{equation}
and there does not exist any global mild solution $u: [0,\infty) \to H^{10}_{df}(\mathbb{R}^3)$ to the initial value problem
\begin{equation}
\begin{split}
\partial_t u -\Delta u +C(u,u)=0\\
u(\cdot,t=0) = u_0
\end{split}
\end{equation}
\end{theorem}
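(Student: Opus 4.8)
The plan is to recall Tao's construction, whose backbone is a reduction of the partial differential equation to an infinite system of ordinary differential equations of ``dyadic shell'' type, followed by the construction of a solution of that system which concentrates energy at higher and higher frequencies in finite time. First I would fix the Schwartz vector fields $\psi_1,\psi_2,\psi_3$ so that each $\mathcal{F}\psi_i$ is smooth and supported in a thin annulus, with the annuli positioned so that the dilated supports of $\mathcal{F}\psi_{i,n}$ and $\mathcal{F}\psi_{j,m}$ are disjoint whenever $|n-m|$ is large, and so that the resulting cascade operator, restricted to fields built from these shell profiles, transfers energy only between neighbouring shells. Then, looking for $u(t)$ of the form $\sum_n X_n(t)\Psi_n$ with $\Psi_n$ a fixed profile on shell $n$, only neighbouring shells interact and (schematically, after controlling the error caused by the heat flow mixing within each annulus) the equation $\partial_t u-\Delta u+C(u,u)=0$ collapses to a system of the form
$$\partial_t X_n \;=\; -c\,\lambda^{2n}X_n\;+\;\lambda^{5n/2}X_{n-1}^2\;-\;\lambda^{5(n+1)/2}X_nX_{n+1},$$
in which $-\Delta$ supplies the dissipative weight $\lambda^{2n}$ and the factor $\lambda^{5n/2}$ is exactly the supercritical scaling weight built into a basic local cascade operator.

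Next I would build $C$ so that the cancellation identity holds. A single basic local cascade operator does not obey $\langle C(u,u),u\rangle=0$, but a suitable finite combination does: antisymmetrizing in the last two slots, so that the $n$-th term of $\langle C(u,v),w\rangle$ becomes $\lambda^{5n/2}\langle u,\psi_{1,n}\rangle\big(\langle v,\psi_{2,n}\rangle\langle w,\psi_{3,n}\rangle-\langle v,\psi_{3,n}\rangle\langle w,\psi_{2,n}\rangle\big)$, makes $\langle C(u,u),u\rangle$ vanish termwise, and symmetrizing in the first two slots afterwards leaves a symmetric local cascade operator still satisfying the cancellation identity. In the reduced system this corresponds to the conservative ``gain from below, loss to above'' structure above: reindexing gives $\sum_n X_n\big(\lambda^{5n/2}X_{n-1}^2-\lambda^{5(n+1)/2}X_nX_{n+1}\big)=0$, hence $\tfrac12\,\partial_t\sum_n X_n^2=-c\sum_n\lambda^{2n}X_n^2\le 0$, the analogue of the energy dissipation law. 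Checking that the operator so obtained is symmetric, built from finitely many basic cascade operators with Schwartz constituents, and compatible with the shell structure is then bookkeeping once the profiles are fixed.

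The heart of the matter is producing a blowup solution of the reduced system. I would construct one by a bootstrap in which, at each time, almost all of the (a priori bounded) energy sits on a single shell $n(t)$, with $n(t)\to+\infty$ as $t\to T^-$ for a finite time $T$: energy is injected on shell $1$ at time $0$, then handed to shell $2$, then to shell $3$, and so on. Because the active amplitude stays of order one, the handoff from shell $n$ to shell $n+1$ takes a time of order $\lambda^{-5(n+1)/2}$, and the geometric series $\sum_n\lambda^{-5(n+1)/2}$ converges, so the cascade runs off to infinite frequency at the finite time $T$. The point that makes this work — and that marks $\alpha=5/4$ as the threshold in the hyperdissipative variant — is that the nonlinear transfer timescale $\lambda^{-5n/2}$ is much shorter than the dissipative timescale $\lambda^{-2n}$ (here $5/2>2$; in general one needs $5/2>2\alpha$, i.e.\ $\alpha<5/4$), so the energy lost to dissipation during each handoff is a fraction tending to $0$, the cascade is never arrested, and the total energy stays bounded below by a positive constant. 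Since the energy concentrates on shell $n(t)$, the norm $\|u(t)\|_{H^{10}}^2$ is of order $\lambda^{20\,n(t)}\to\infty$ as $t\to T^-$ (indeed every Sobolev norm of positive order diverges; the index $10$ plays no special role).

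Finally I would promote this ODE blowup to the stated non-existence of a global mild solution. Because $C$ is a local cascade operator assembled from Schwartz fields with compactly supported Fourier transforms, it is bounded as a bilinear map $H^{10}_{df}(\mathbb{R}^3)\times H^{10}_{df}(\mathbb{R}^3)\to H^{10}_{df}(\mathbb{R}^3)$, so the equation is locally well-posed in $H^{10}_{df}$ with unique mild solutions, and the frequency-localized class on which the reduction lives is preserved by the heat semigroup and by $C$. Hence the unique mild solution issuing from the injected-energy datum $u_0$ agrees on its maximal interval of existence with the cascade solution built above, so its $H^{10}$ norm blows up at $T$ and no global mild solution exists. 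I expect the main obstacle to be the third step: running the bootstrap so that the transfer times stay summable, the dissipation stays negligible on every shell, and a supercritical norm genuinely diverges, all while keeping the contributions of the inactive shells under control and respecting the exact energy-conservation constraint imposed by the cancellation identity; by comparison the reduction to the ODE system (including estimating the errors from intra-annulus heat mixing) and the return to the partial differential equation are comparatively soft.
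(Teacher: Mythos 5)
Your proposal correctly captures the overall shape of Tao's argument --- reduce the equation, on a class of frequency-localized fields preserved by the flow, to a shell-coupled infinite ODE system, construct a solution whose energy cascades to infinite frequency in finite time, and conclude non-existence of a global mild solution from local well-posedness and uniqueness --- but the heart of the construction is not the one-mode-per-shell dyadic transfer you propose, and this difference is not cosmetic. Tao uses \emph{four} Schwartz profiles $\psi_1,\dots,\psi_4$, not three (the present paper, following him, works with four balls $B_1,\dots,B_4$ and four fields $\psi_i$; see the discussion preceding Lemma~\ref{devep}), and the resulting per-shell state vector $(X_{1,n},X_{2,n},X_{3,n},X_{4,n})$ obeys a ``quadratic circuit'' built from a storage mode, a slow quadratic pump, a rotor/amplifier pair, and a transfer gate. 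The essential design feature is a built-in \emph{delay}: energy arriving at shell $n+1$ is parked in a storage mode and cannot begin cascading to shell $n+2$ until a slowly charging secondary mode crosses a threshold that triggers an abrupt, exponentially growing transfer. That delay is what makes the ``one shell at a time'' bootstrap hold by construction rather than being something to be fought for.

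In your single-mode system $\partial_t X_n=-c\lambda^{2n}X_n+\lambda^{5n/2}X_{n-1}^2-\lambda^{5(n+1)/2}X_nX_{n+1}$ there is no such delay: the instant $X_{n+1}$ is nonzero, $X_{n+2}$ starts to grow on its own and much shorter time scale $\lambda^{-5(n+2)/2}$, so the handoffs $n\to n+1$, $n+1\to n+2$, $n-1\to n$, and so on all overlap, and your postulate that ``almost all of the energy sits on a single shell $n(t)$'' is not a propagated bootstrap hypothesis but the crux of the matter --- essentially equivalent to proving finite-time blowup for the dissipative Katz--Pavlovic dyadic shell model, which is itself a nontrivial and delicate result, not a soft step. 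You correctly sense that the bootstrap is the hard part, but you have not identified the structural ingredient that Tao introduces precisely to avoid this tangle. A faithful reconstruction of the cited proof should replace the single amplitude $X_n$ per shell by the four coupled modes $X_{i,n}$ appearing in Lemma~\ref{devep} and the single two-shell transfer by the pump/rotor/transfer circuit with its delay, so that at each stage one knows exactly which two or three modes are active and can propagate bounds on all the others.
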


In this subsection we show that Tao's result is flexible in the sense that we find a local cascade operator with more restraints on the constituent Schwartz vector fields $\psi_i$ whose corresponding local cascade equation admits a blowup solution with fractional dissipation. In particular we prove:

\begin{theorem}
\label{BLOWUP}
Let $1<\lambda<2$ and $0<\alpha<5/4$ be arbitrary. Then there exists a zero-momentum symmetric local cascade operator $C$ and a Schwartz divergence-free vector field $u_0$ such that the cancellation identity holds
\begin{equation}
\langle C(u,u),u \rangle =0 \textrm{ for all } u \in H^{10}_{df}(\mathbb{R}^3)\\
\end{equation}
and there does not exist any global mild solution $u: [0,\infty) \to H^{10}_{df}(\mathbb{R}^3)$ to the initial value problem
\begin{equation}
\begin{split}
\partial_t u +(-\Delta)^\alpha u +C(u,u)=0\\
u(\cdot,t=0) = u_0
\end{split}
\end{equation}
\end{theorem}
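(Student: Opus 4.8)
The plan is to reduce Theorem \ref{BLOWUP} to the construction in \cite{T} by a change of variables and a careful choice of the constituent Schwartz fields. First I would recall the structure of Tao's argument: he reduces the cascade PDE to an infinite system of ODEs for the ``energies'' $X_n(t) := \langle u(t), \psi_{1,n}\rangle$ (or rather the amplitudes in each dyadic shell $n$), the system being, schematically, $\dot X_n + \lambda^{2n} X_n = \lambda^{5n/2}(X_{n-1}^2 - X_n X_{n+1})$ after suitable normalization, and he exhibits initial data for which the energy cascades down the shells fast enough to blow up in finite time, using a self-similar/barrier ansatz $X_n(t) \approx \lambda^{n} A((T-t)\lambda^{2n})$. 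The dissipation term $\lambda^{2n} X_n$ comes from $-\Delta$; replacing $-\Delta$ by $(-\Delta)^\alpha$ simply replaces it by $\lambda^{2\alpha n} X_n$, i.e.\ it weakens the dissipation (for $\alpha < 5/4$) relative to the cascade term $\lambda^{5n/2}$, which is precisely the regime in which Tao's footnote 12 asserts blowup still occurs. So the first step is to rerun Tao's ODE analysis with exponent $2\alpha$ in place of $2$: one checks that the key inequality making the cascade dominate the dissipation is $2\alpha < 5/2$, which holds, and that the self-similar exponent in the barrier ansatz shifts from $2$ to $2\alpha$ without affecting convergence of the relevant series (all the sums are geometric and controlled by $5/2 - 2\alpha > 0$).

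The second step is to arrange the \emph{zero-momentum} condition $\int \psi_i\,dx = 0$. Since the $\psi_i$ are required to have Fourier support in a thin annulus around the unit sphere, and $\int \psi_i\,dx = \hat\psi_i(0)$, the zero-momentum condition is automatic as soon as the annulus is chosen to exclude the origin — which it already does. So this step is essentially free: I would just remark that the Schwartz fields Tao constructs (or a trivial modification of them) already satisfy $\hat\psi_i(0)=0$ because their spectral supports are bounded away from $\xi = 0$, and hence the resulting local cascade operator is zero-momentum. The divergence-free condition and the cancellation identity $\langle C(u,u),u\rangle = 0$ are preserved verbatim, as they depend only on the algebraic/geometric structure of the $\psi_{i,n}$ (the cancellation identity follows from antisymmetrizing the trilinear form in the three slots, exactly as in \cite{T}), not on the dissipation exponent.

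The third step is to verify that the blowup solution produced by the ODE system genuinely corresponds to a mild solution $u:[0,T)\to H^{10}_{df}$ that fails to extend to $[0,\infty)$, i.e.\ that the $H^{10}$ norm of the reconstructed $u$ blows up as $t\to T$. Here I would follow Tao's bookkeeping: $\|u(t)\|_{H^{10}}^2 \sim \sum_n \lambda^{20 n} X_n(t)^2$ up to constants, and the self-similar profile forces the ``tip'' of the cascade to reach arbitrarily high $n$ in finite time, so this sum diverges at $t = T$; meanwhile for $t < T$ local well-posedness in $H^{10}_{df}$ (Tao's Proposition on local existence, which is insensitive to replacing $-\Delta$ by $(-\Delta)^\alpha$ with $\alpha > 0$ since $(-\Delta)^\alpha$ is still a nonnegative self-adjoint operator generating a contraction semigroup on $H^{10}$) gives a genuine mild solution agreeing with the ODE reconstruction.

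The main obstacle I anticipate is the third step — not conceptually, but in making sure the a priori estimates Tao uses for local well-posedness and for the barrier argument are uniform enough in the hyperdissipative range $\alpha$ up to $5/4$ (and, as the theorem also allows, down toward $\alpha = 0$, where dissipation is weakest and the local theory is most delicate). In particular the heat-semigroup smoothing estimates that Tao invokes must be replaced by the corresponding estimates for $e^{-t(-\Delta)^\alpha}$, which are weaker when $\alpha<1$; I would need to check that the fixed-point argument for the mild formulation still closes in $H^{10}_{df}$, using that the nonlinearity $C$ maps $H^{10}_{df}\times H^{10}_{df}$ boundedly into $H^{10}_{df}$ (indeed into a smaller space, by the spectral localization of the $\psi_{i,n}$), so that no smoothing from the semigroup is actually needed for local existence — the nonlinearity is ``tame'' and the ODE truncation/limit argument suffices regardless of $\alpha > 0$. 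Once that is in hand, the barrier/comparison argument for finite-time blowup of the truncated and then untruncated systems goes through as in \cite{T} with the single change $2 \rightsquigarrow 2\alpha$, and the proof is complete.
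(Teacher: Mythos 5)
Your proposal is correct and follows essentially the same route as the paper: treat the fractional dissipation as a perturbative modification of Tao's construction (replacing the $\lambda^{2n}$ dissipation rate by $\lambda^{2\alpha n}$ and checking that the cascade timescale $\lambda^{-5n/2}$ still beats it precisely when $\alpha<5/4$), and observe that the zero-momentum condition costs nothing since Tao's argument never uses $\int\psi_i$ and the annular Fourier support already forces $\hat\psi_i(0)=0$. The paper's proof is likewise a citation of Tao's ODE analysis with only the dissipation exponent modified (its Lemma on the evolution of the $X_{i,n}$), so no further comment is needed.
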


First, we determine the structure our local cascade operator must have in order to fulfill some basic requirements like the cancellation property. In particular, following Tao, we consider four balls $B_1, B_2, B_3, B_4$ in the spherical shell $\{ \xi : 1<\abs{\xi}\leq \frac{\lambda+1}{2}\}$, and we choose the balls so that the unions $B_i\cup(-B_i)$ are mutually disjoint for $i\in \{1,2,3,4\}$. We choose four zero-momentum divergence-free Schwartz vector fields $\psi_i$ such that $\mathcal{F}(\psi_i)$ has support in $B_i \cup -B_i$, and we normalize them so that $\| \psi_i\|_2 =1$. We have the $L^2$-rescaled functions $\psi_{i,n}:= \lambda^{3n/2}\psi_i(\lambda^n x)$ that also satisfy $\| \psi_{i,n}\|_2=1$. Let $S=\{(0,0,0),(1,0,0),(0,1,0),(0,0,1)\}$. We define the following local cascade operator (in other words, a linear combination of basic local cascade operators):
$$C(u,v):= \sum_{n\in \mathbb{Z}} \sum_{\substack{\hspace{5px}i_1,i_2,i_3\in \{1,2,3,4\} \\(\mu_1,\mu_2,\mu_3)\in S} }a_{i_1,i_2,i_3,\mu_1,\mu_2,\mu_3}\lambda^{5n/2}\langle u,\psi_{i_1,n+\mu_1} \rangle \langle v,\psi_{i_2,n+\mu_2} \rangle \psi_{i_3,n+\mu_3}$$
To ensure that $C$ is a symmetric bilinear operator we require that
$$a_{i_1,i_2,i_3,\mu_1,\mu_2,\mu_3}=a_{i_2,i_1,i_3,\mu_2,\mu_1,\mu_3}.$$
The cancellation property is satisfied if we require
$$\sum_{\{a,b,c\}=\{1,2,3\}} a_{i_a,i_b,i_c,\mu_a,\mu_b,\mu_c}=0.$$
With these first conditions imposed on $C$, we are ready to study the basic dynamics of the pseudodifferential equation associated to $C$. 
Whenever applicable, we define the following Fourier projections that will act as "wavelets":
$$u_{i,n}(t)(x) := \mathcal{F}^{-1}(\chi_{\xi\in \lambda^n (B_i \cup -B_i)}(\xi)\cdot \mathcal{F}(u(t))(\xi))$$
and the following functions of time that behave like "wavelet coefficients":
$$X_{i,n}(t):= \langle u(t),\psi_{i,n} \rangle =\langle u_{i,n}(t),\psi_{i,n} \rangle .$$
Lastly, we define the following "local energies":
$$E_{i,n}(t):= \frac{1}{2}\|u_{i,n}(t)\|_2^2.$$
The behavior of the pseudodifferential equation is determined by the dynamics of the functions $X_{i,n}(t)$. This is described in Lemma 4.1 of \cite{T}. Below, we only note the modifications necessary in the case of fractional dissipation (assuming that $\psi$ is a zero-momentum vector field has no effect on the lemma's conclusions).
\begin{lemma}
\label{devep}
Suppose $u$ is a global mild solution to the pseudodifferential equation associated to $C$ with initial data $\psi_{1,n_0}$ for some $n_0$ integer.
For any $i,n$ we have
$$\partial_t X_{i,n} = \sum_{i_1,i_2 \in \{1,2,3,4\}}\sum_{(\mu_1,\mu_2,\mu_3)\in S} a_{i_1,i_2,i,\mu_1,\mu_2,\mu_3}\lambda^{5(n-\mu_3)/2}X_{i_1,n-\mu_3+\mu_1}X_{i_2,n-\mu_3+\mu_2} + O(\lambda^{2\alpha n}E_{i,n}^{1/2})$$
and
$$\frac{1}{2}X_{i,n}^2(t) \leq E_{i,n}(t) \leq \frac{1}{2}X_{i,n}^2(t) + O\bigg(\lambda^{2\alpha n} \int_0^t E_{i,n}(s)ds\bigg).$$
\end{lemma}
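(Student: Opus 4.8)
The plan is to follow the proof of Lemma~4.1 of \cite{T} essentially verbatim, the one genuinely new feature being that the heat operator there is replaced by $(-\Delta)^\alpha$; the zero-momentum hypothesis on the $\psi_i$ plays no role in this argument. First I would fix the Fourier projections $P_{i,n}$ onto the frequency block $\lambda^n(B_i\cup -B_i)$, so that $u_{i,n}=P_{i,n}u$. Since $1<|\xi|\le\frac{\lambda+1}{2}<\lambda$ on each $B_i$ and the collection $\{B_j,-B_j\}$ is mutually disjoint, the blocks $\{\lambda^m(B_j\cup -B_j)\}_{j,m}$ are pairwise disjoint; hence the $P_{i,n}$ are mutually orthogonal projections, each $\psi_{j,m}$ lies in exactly the $(j,m)$ block, and $\langle\psi_{a,m},\psi_{b,p}\rangle=\delta_{ab}\delta_{mp}$ by the normalization $\|\psi_i\|_2=1$. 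The structural fact to isolate is that, by the definition of a (basic) local cascade operator, $C(u,u)$ is a superposition of the $\psi_{j,m}$ — every summand has its output frequency inside a single block — so $P_{i,n}C(u,u)$ is a scalar multiple of $\psi_{i,n}$, and $\langle C(u,u),\psi_{i,n}\rangle$ is a \emph{finite} sum.

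For the evolution equation for $X_{i,n}$ I would pair $\partial_t u+(-\Delta)^\alpha u+C(u,u)=0$ with $\psi_{i,n}$; this is legitimate because, exactly as in \cite{T}, a mild solution valued in $H^{10}_{df}$ is classical in time (and one records the a priori bound $\|u(t)\|_2\le\|u_0\|_2=1$ from pairing with $u$ and the cancellation identity). Expanding $\langle C(u,u),\psi_{i,n}\rangle$ and applying $\langle\psi_{a,m},\psi_{b,p}\rangle=\delta_{ab}\delta_{mp}$ retains only the terms with output index $i$ and output scale $n$, i.e. those with $n'=n-\mu_3$, which collapses the sum to exactly the cascade expression displayed in the statement (the overall sign being a matter of the normalization of the coefficients $a$). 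For the dissipative term, $\langle(-\Delta)^\alpha u,\psi_{i,n}\rangle=\langle u,(-\Delta)^\alpha\psi_{i,n}\rangle=\langle u_{i,n},(-\Delta)^\alpha\psi_{i,n}\rangle$ since $(-\Delta)^\alpha\psi_{i,n}$ is supported in the $(i,n)$ block, and on that block $\lambda^{2\alpha n}\le|\xi|^{2\alpha}\le(\tfrac{\lambda+1}{2})^{2\alpha}\lambda^{2\alpha n}$, so Cauchy--Schwarz gives $|\langle(-\Delta)^\alpha u,\psi_{i,n}\rangle|\lesssim\lambda^{2\alpha n}\|u_{i,n}\|_2$. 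This is the asserted error term ($\lambda^{2\alpha n}$ times a local-energy quantity) and is precisely Tao's computation with $\lambda^{2n}$ replaced by $\lambda^{2\alpha n}$.

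For the two-sided bound on $E_{i,n}$, the lower inequality $\tfrac12 X_{i,n}^2\le E_{i,n}$ is Cauchy--Schwarz, since $|X_{i,n}|=|\langle u_{i,n},\psi_{i,n}\rangle|\le\|u_{i,n}\|_2$. For the upper inequality I would split $u_{i,n}=X_{i,n}\psi_{i,n}+w_{i,n}$ with $w_{i,n}\perp\psi_{i,n}$, so that $\|u_{i,n}\|_2^2=X_{i,n}^2+\|w_{i,n}\|_2^2$ and it suffices to bound $\|w_{i,n}(t)\|_2^2$. Differentiating, and using that $\partial_t(X_{i,n}\psi_{i,n})$ is parallel to $\psi_{i,n}$ and that $\langle P_{i,n}C(u,u),w_{i,n}\rangle=0$ by the structural fact, one obtains $\tfrac{d}{dt}\tfrac12\|w_{i,n}\|_2^2=-\langle(-\Delta)^\alpha u_{i,n},w_{i,n}\rangle$; writing $(-\Delta)^\alpha u_{i,n}=X_{i,n}(-\Delta)^\alpha\psi_{i,n}+(-\Delta)^\alpha w_{i,n}$ and discarding the non-negative term $\langle(-\Delta)^\alpha w_{i,n},w_{i,n}\rangle$ leaves $\tfrac{d}{dt}\tfrac12\|w_{i,n}\|_2^2\le|X_{i,n}|\,\|(-\Delta)^\alpha\psi_{i,n}\|_2\,\|w_{i,n}\|_2\lesssim\lambda^{2\alpha n}\|u_{i,n}\|_2^2\lesssim\lambda^{2\alpha n}E_{i,n}$. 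Since $\psi_{1,n_0}$ is supported in a single block, $u_{i,n}(0)$ is a scalar multiple of $\psi_{i,n}$ for every $(i,n)$, hence $w_{i,n}(0)=0$; integrating from $0$ to $t$ yields $\|w_{i,n}(t)\|_2^2\lesssim\lambda^{2\alpha n}\int_0^t E_{i,n}(s)\,ds$, which is the claimed upper bound.

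I do not expect a real obstacle here: the lemma is a bookkeeping step, and the whole content of "the modifications in the case of fractional dissipation" is the replacement of $\lambda^{2n}$ by $\lambda^{2\alpha n}$ in the two dissipative estimates above. The only points calling for a little care are the frequency-support combinatorics that force the blocks to be pairwise disjoint (so that the $P_{i,n}$ are orthogonal and the nonlinear pairing is finite) and the justification — identical to \cite{T} — that a mild solution has enough time regularity to legitimize differentiating the $L^2$ pairings.
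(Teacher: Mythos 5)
Your argument is correct and follows essentially the same route as the paper. For the first ODE, you pair the equation with $\psi_{i,n}$ and exploit the disjointness of the frequency blocks to collapse $\langle C(u,u),\psi_{i,n}\rangle$ to a finite cascade sum, and you bound the fractional dissipation pairing by a constant times $\lambda^{2\alpha n}\|u_{i,n}\|_2$; the paper does the same after differentiating the projected Duhamel formula for the mild solution, which is the same computation modulo the justification of time differentiability that you handle by citing \cite{T}. For the two-sided bound on $E_{i,n}$, your presentation is mildly different from the paper's: instead of invoking Tao's local energy inequality (Eq.~(4.11) of \cite{T}) and then subtracting the $X_{i,n}$ ODE from it, you decompose $u_{i,n}=X_{i,n}\psi_{i,n}+w_{i,n}$ orthogonally, note that $E_{i,n}-\tfrac12 X_{i,n}^2=\tfrac12\|w_{i,n}\|_2^2$, and drive a direct Gr\"onwall bound on $\|w_{i,n}\|_2^2$ by discarding the nonnegative term $\langle(-\Delta)^\alpha w_{i,n},w_{i,n}\rangle$ and using $w_{i,n}(0)=0$, which holds because $u_0=\psi_{1,n_0}$ lies in a single block. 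This is precisely the content of the paper's differencing step made self-contained, so the two arguments buy the same thing; yours avoids an external citation, the paper's is shorter on the page. One incidental remark: you have implicitly corrected two typographical slips in the statement --- the paper defines $E_{i,n}=\tfrac12\|u_{i,n}\|_2$ and writes the dissipative error as $O(\lambda^{2\alpha n}E_{i,n}^2)$, but the quantity actually intended and used (both in your computation and in the paper's own proof) is $E_{i,n}=\tfrac12\|u_{i,n}\|_2^2$ with error $O(\lambda^{2\alpha n}E_{i,n}^{1/2})$.
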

\begin{proof}
The vector field $u(t)$ is a mild solution of the pseudodifferential equation associated to $C$. Thus we write
\begin{equation}\label{mild}u(t) = e^{-t(-\Delta)^\alpha}u_0 +\int_0^t e^{(s-t)(-\Delta)^\alpha}C(u(s),u(s))ds.\end{equation}
As noted before, we have the "wavelet" decomposition of $u$:
$$u(t) = \sum_{n,i} u_{i,n}(t)$$
with
$$X_{i,n}(t) = \langle u(t),\psi_{i,n}\rangle = \langle u_{i,n}(t), \psi_{i,n}\rangle.$$
We pair Equation (\ref{mild}) with $\psi_{i,n}$ and get
$$
u_{i,n}(t) = e^{-t(-\Delta)^\alpha}X_{i,n}(0)\psi_{i,n}+  \psi_{i,n}\bigg(\sum_{i_1,i_2 \in \{1,2,3,4\}, (\mu_1,\mu_2,\mu_3)\in S}  a_{i_1,i_2,i,\mu_1,\mu_2,\mu_3}\lambda^{5(n-\mu_3)/2}$$
$$\int_0^t\langle u,\psi_{i_1,n+\mu_1-\mu_3} \rangle \langle u,\psi_{i_2,n+\mu_2-\mu_3} \rangle \bigg).
$$
If we differentiate with respect to the time variable $t$ and simplify we obtain
$$
\partial_t u_{i,n} = -(-\Delta)^\alpha u_{i,n}+$$ \begin{equation}\label{mildcoeff} \sum_{i_1,i_2\in \{1,2,3,4\}, (\mu_1,\mu_2,\mu_3)\in S}  a_{i_1,i_2,i,\mu_1,\mu_2,\mu_3}\lambda^{5(n-\mu_3)/2} X_{i_1,n-\mu_3+\mu_1} X_{i_2,n-\mu_3+\mu_2}\psi_{i,n}
\end{equation}
Now, by the localization in frequency space, we observe that (where $K$ depends only on $\psi_i$):
\begin{equation}\label{IBP}\abs{\langle -(-\Delta)^\alpha u_{i,n}, \psi_{i,n} \rangle} =\abs{ \langle u_{i,n} , -(-\Delta)^\alpha\psi_{i,n}\rangle} \leq K E_{i,n}^{1/2}(t)\lambda^{2\alpha n }.\end{equation}
Pairing Equation (\ref{mildcoeff}) with $\psi_{i,n}$ and using Equation (\ref{IBP}) gets us the first statement in the lemma.
The first inequality in the second statement follows from Cauchy-Schwarz. It remains to prove the second inequality. We recall the "local energy inequality" found by Tao, which is independent of the dissipation parameter $\alpha$. Equation (4.11) in \cite{T} is 
\begin{equation}\label{LEI}
\partial_t E_{i,n} \leq \sum_{i_1,i_2}\sum_{(\mu_1,\mu_2,\mu_3)\in S} a_{i_1,i_2,i,\mu_1,\mu_2,\mu_3} \lambda^{5(n-\mu_3)/2}X_{i_1,n-\mu_3+\mu_1}X_{i_2,n-\mu_3+\mu_1}X_{i,n}.
\end{equation}
Now, if we multiply the evolution inequality for $X_{i,n}$ by $X_{i,n}$, subtract this from Equation (\ref{LEI}), and integrate in time, we have the second desired inequality
$$E_{i,n}(t) \leq \frac{1}{2}X_{i,n}^2(t) +K\lambda^{2\alpha n} \int_0^t E_{i,n}(s)ds.$$
\end{proof}
The only significant change from the work in \cite{T} is the presence of $\alpha$ in the exponent of $\lambda$ in the dissipation term.

Tao's construction of a blowup solution to the pseudodifferential equation associated to $C$ continues with an intricate analysis of an infinite dimensional ODE system whose dynamics are contained within the dynamics of $C$. Tao proves that there exist a sequence of times $t_m$ converging to some finite $T$ so that for some $\epsilon_0>0$, at least $\lambda^{-\epsilon_0 n}$ of the energy is concentrated in the ball of radius $\lambda^{-n}$ around the spatial origin. 

 An essential part of the blowup construction is that the aforementioned concentration of energy occurs on timescales of order $t_{n+1}-t_m= \lambda^{(-\frac{5}{2}+O(\epsilon_0))n}$ which are signicantly smaller than the dissipation time scale which (for fractional dissipation of order $\alpha$) is $\lambda^{2\alpha n}$ by Lemma \ref{devep}. This can occur so long as $\alpha<5/4$. Since Tao never uses any assumption about the integral of the functions $\psi_{i,n}$, one can follow through Tao's construction and get a blowup solution to the pseudodifferential equation (with hyperdissipation of order $\alpha<5/4$) associated to some zero-momentum cascade operator. This is nothing less than Theorem \ref{BLOWUP} above.

\subsection{Additional Properties of Local Cascade Operators}
We have established Theorem \ref{BLOWUP}, where we constructed a zero-momentum, symmetric, local cascade operator $C(u,v)$ satisfying the cancellation identity for divergence-free vector fields whose associated $\alpha$-dissipative pseudodifferential equation has a solution blowing up in finite time from Schwartz initial data.  In Section 2, we proved a partial regularity result for $\alpha$-dissipative pseudodifferential equations with nonlinearities arising from amenable bilinear operators. It follows that if we desire to use Theorem \ref{PRTHM} and get a partial regularity result for the pseudodifferential equation from Theorem \ref{BLOWUP}, we have to verify that $C(u,v)$ is an amenable bilinear operator. 

That $C(u,v)$ is defined for Schwartz vector fields is clear. That $C(u,v)$ satisfies the cancellation identity 
$$\langle\mathbb{P}C(u,u), u \rangle =0\quad \textrm{ for divergence-free } u$$
is equally clear. Indeed, since $C(u,u)$ is always divergence-free, we have $\mathbb{P}C(u,u)=C(u,u)$. It follows that
$$\langle \mathbb{P}C(u,u),u\rangle=\langle C(u,u),u\rangle=0 \textrm{ for all divergence-free } u.$$
The scaling property, with the choice of scaling parameter $\lambda$, is clearly satisfied by $C$. Tao also makes this observation. Thus, it remains to show that $C^1_v(u)=\mathbb{P}C(u,v)$ is a pseudodifferential operator in some class $OPS^m_{1,1}$ and to show the bilinear operator bound. First we need to prove an auxiliary lemma.

\begin{lemma}
\label{momentuse}
Let $\psi$ be a Schwartz divergence-free vector field that has zero momentum, i.e. $\int \psi =0$. Let $\Psi_i$ be the vector field given by Lemma \ref{DivF} so that $\textrm{div } \Psi_i = \psi_i$, the $i^{th}$ component of $\psi$. Let $\psi_{n}:= \lambda^{3n/2}\psi(\lambda^n x)$ be the $L^2$ rescaling of $\psi$ and let $\Psi_{i,n}$ be the $L^2$ rescaling of $\Psi_i$. Then for any vector field $v\in H^{10}_{df}$, we have:
$$\lambda^n \langle \psi_n, v\rangle =-\sum_i \int \Psi_{i,n}\cdot \nabla v_i $$
\end{lemma}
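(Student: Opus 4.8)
The plan is to reduce the claimed identity to a single integration by parts, once we record how the primitive $\Psi_i$ transforms under the $L^2$-rescaling $x\mapsto\lambda^n x$. First I would invoke Lemma \ref{DivF}: since $\psi$ is Schwartz and $\int\psi_i=0$ for each $i$ (a component-wise consequence of the zero-momentum hypothesis $\int\psi=0$), there is a vector field $\Psi_i$, with the regularity and decay afforded by that lemma, satisfying $\textrm{div}\,\Psi_i=\psi_i$. Expand $\langle\psi_n,v\rangle=\sum_i\int\psi_{i,n}\,v_i\,dx$, where $\psi_{i,n}(x)=\lambda^{3n/2}\psi_i(\lambda^n x)$ is the $i$th component of $\psi_n$, and set $\Psi_{i,n}(x)=\lambda^{3n/2}\Psi_i(\lambda^n x)$ analogously.

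The key computation is the scaling identity $\textrm{div}\,\Psi_{i,n}=\lambda^n\psi_{i,n}$. This is immediate from the chain rule: each spatial derivative of $\Psi_{i,n}(x)=\lambda^{3n/2}\Psi_i(\lambda^n x)$ produces a factor $\lambda^n$, so $\textrm{div}\,\Psi_{i,n}(x)=\lambda^{3n/2}\lambda^n(\textrm{div}\,\Psi_i)(\lambda^n x)=\lambda^n\lambda^{3n/2}\psi_i(\lambda^n x)=\lambda^n\psi_{i,n}(x)$.

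With this in hand, multiply $\langle\psi_n,v\rangle$ by $\lambda^n$ and substitute to get $\lambda^n\langle\psi_n,v\rangle=\sum_i\int(\textrm{div}\,\Psi_{i,n})\,v_i\,dx$; integrating by parts in each summand yields $-\sum_i\int\Psi_{i,n}\cdot\nabla v_i\,dx$, which is the claim. The only point needing a remark is that no boundary term survives: $\Psi_{i,n}$ inherits rapid decay from Lemma \ref{DivF} and $v\in H^{10}(\mathbb{R}^3)\subset H^1(\mathbb{R}^3)$, so the integration by parts is legitimate by approximating $v$ in $H^1$ by Schwartz vector fields. Note that neither the divergence-free condition on $v$ nor on $\psi$ is actually invoked here; the zero-momentum assumption is used solely to produce the primitive $\Psi_i$. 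Consequently there is no real obstacle in this lemma — all of its content is carried by the existence statement of Lemma \ref{DivF}, which we are assuming.
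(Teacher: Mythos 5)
Your proof is correct and follows the same route as the paper's: establish the scaling identity $\textrm{div}\,\Psi_{i,n}=\lambda^n\psi_{i,n}$ by the chain rule and then integrate by parts. The extra remark justifying the absence of boundary terms is a harmless elaboration of what the paper leaves implicit.
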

\begin{proof}
Since $\psi$ has zero momentum, we can use Lemma \ref{DivF} and write
$$\lambda^n \psi_{i,n}(x)= \lambda^{5n/2}\psi_i (\lambda^n x) = \lambda^{5n/2}(\textrm{div } \Psi_i)(\lambda^n x)=\lambda^{3n/2} \textrm{div }(\Psi_i(\lambda^n x))= \textrm{div } \Psi_{i,n}(x).$$
Then we integrate by parts:
$$\lambda^n\langle \psi_n, v\rangle = \sum_i \int \lambda^{5n/2}\psi_{i}(\lambda^n x)v_i dx = \sum_i \int v_i \textrm{div } \Psi_{i,n}dx =-\sum_i \int \Psi_{i,n} \cdot \nabla v_i dx$$
\end{proof}
\begin{proposition}
\label{LPBOUNDS}
Let $C(u,v)$ be a zero-momentum, symmetric local cascade operator. Then, for arbitrary $0<\gamma<1$ and for any $1\leq p_1, p_2, r\leq \infty$ satisfying
$$\frac{1}{p_1}+\frac{1}{p_2}+\frac{\gamma}{3} = \frac{1}{r},$$
we have that the following inequality holds
$$\| C(u,v) \|_{L^r} \leq K \| u\|_{L^{p_1}}(\|v\|_{L^{p_2}} +\| \nabla v\|_{L^{p_2}})$$
for all $u\in L^{p_1}$ and $v\in W^{1,p_2}$.
The constant $K$ depends on $\gamma,\lambda$, and the vector fields $\psi_i$ used in the definition of $C(u,v)$.
\end{proposition}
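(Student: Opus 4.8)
The plan is to reduce to a single basic local cascade operator, bound each of the three ``wavelet'' factors by H\"older with the correct rescaling exponent, and then sum a geometric series in the scale parameter $n$ --- using the zero-momentum hypothesis precisely to manufacture the extra decay needed at large $n$. Since $C$ is a finite linear combination of basic local cascade operators and every shift $\mu_\ell$ lies in $\{0,1\}$ (so is bounded), it suffices to estimate a single expression $F:=\sum_{n\in\mathbb{Z}}\lambda^{5n/2}\langle u,\psi_{i_1,n}\rangle\langle v,\psi_{i_2,n}\rangle\,\psi_{i_3,n}$; reindexing $n$ to absorb the bounded shifts only changes constants. Writing $c_n:=\lambda^{5n/2}\langle u,\psi_{i_1,n}\rangle\langle v,\psi_{i_2,n}\rangle$, the triangle inequality in $L^r$ --- valid for \emph{every} $1\le r\le\infty$, which is exactly what allows the endpoint exponents --- gives $\|F\|_{L^r}\le\sum_n|c_n|\,\|\psi_{i_3,n}\|_{L^r}$.

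For any Schwartz field $\psi$ and $1\le q\le\infty$ one has $\|\lambda^{3n/2}\psi(\lambda^n\cdot)\|_{L^q}=\lambda^{3n(1/2-1/q)}\|\psi\|_{L^q}$. Hence H\"older yields $\abs{\langle u,\psi_{i_1,n}\rangle}\le K\lambda^{3n(1/p_1-1/2)}\|u\|_{L^{p_1}}$, $\abs{\langle v,\psi_{i_2,n}\rangle}\le K\lambda^{3n(1/p_2-1/2)}\|v\|_{L^{p_2}}$, and $\|\psi_{i_3,n}\|_{L^r}=K\lambda^{3n(1/2-1/r)}$. Multiplying these with $\lambda^{5n/2}$ and inserting the constraint $\tfrac1{p_1}+\tfrac1{p_2}-\tfrac1r=-\tfrac\gamma3$, the net power of $\lambda^n$ collapses to $1-\gamma$, so $|c_n|\,\|\psi_{i_3,n}\|_{L^r}\le K\lambda^{(1-\gamma)n}\|u\|_{L^{p_1}}\|v\|_{L^{p_2}}$. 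Since $0<\gamma<1$ and $\lambda>1$, this is summable over $n<0$ but diverges over $n\ge0$.

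To handle $n\ge0$ we instead apply Lemma \ref{momentuse} to $\psi_{i_2}$, with divergence potentials $\Psi_\ell$ supplied by Lemma \ref{DivF} (these are Schwartz since $\mathcal{F}\psi_{i_2}$ is supported away from the origin): then $\abs{\langle v,\psi_{i_2,n}\rangle}=\lambda^{-n}\,\abs{\sum_\ell\int\Psi_{\ell,n}\cdot\nabla v_\ell}\le K\lambda^{-n}\lambda^{3n(1/p_2-1/2)}\|\nabla v\|_{L^{p_2}}$, where for merely $v\in W^{1,p_2}$ this identity extends by density. Redoing the exponent count with this extra factor $\lambda^{-n}$ turns $1-\gamma$ into $-\gamma<0$, so $|c_n|\,\|\psi_{i_3,n}\|_{L^r}\le K\lambda^{-\gamma n}\|u\|_{L^{p_1}}\|\nabla v\|_{L^{p_2}}$ for $n\ge0$. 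Summing the bound of the previous paragraph over $n<0$ and this one over $n\ge0$ --- two convergent geometric series --- gives $\|F\|_{L^r}\le K\|u\|_{L^{p_1}}\bigl(\|v\|_{L^{p_2}}+\|\nabla v\|_{L^{p_2}}\bigr)$, and summing over the finitely many tuples $(i_1,i_2,i_3,\mu_1,\mu_2,\mu_3)$ weighted by $a_{i_1,i_2,i_3,\mu_1,\mu_2,\mu_3}$ yields the claimed inequality, with $K$ depending only on $\gamma$, $\lambda$, and the fixed Schwartz fields $\psi_i$ (hence also the finitely many potentials $\Psi_\ell$).

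The only genuine subtlety is this last step: at this level of generality the series over large $n$ really does diverge without the zero-momentum hypothesis, so that hypothesis is being used essentially, and one must confirm that the potentials $\Psi_\ell$ are regular enough (Schwartz, via the frequency support of $\psi_{i_2}$) for the rescaling identity $\|\Psi_{\ell,n}\|_{L^{p_2'}}=\lambda^{3n(1/p_2-1/2)}\|\Psi_\ell\|_{L^{p_2'}}$ to be legitimate. Everything else is bookkeeping of scaling exponents.
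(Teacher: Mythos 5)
Your proposal is correct and follows essentially the same route as the paper: reduce to a basic local cascade operator, apply H\"older and the $L^q$ rescaling identity to get the exponent $\lambda^{(1-\gamma)n}$, sum that over $n<0$, and for $n\geq 0$ invoke the zero-momentum hypothesis via Lemma \ref{momentuse} to trade a factor of $\lambda^{n}$ for a derivative on $v$, yielding the convergent exponent $\lambda^{-\gamma n}$. The exponent bookkeeping matches the paper's computation exactly.
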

\begin{proof}
It suffices to prove the proposition for any (zero-momentum, symmetric) basic local cascade operator:
$$C(u,v)=\sum_{n\in \mathbb{Z}}\lambda^{5n/2} \langle u, \psi_{1,n} \rangle \langle v, \psi_{2,n} \rangle  \psi_{3,n}(x).$$
Since the sum above is absolutely convergent, we can divide the operator $C$ into two parts:
$$C^+(u,v) := \sum_{n\geq 0}\lambda^{5n/2} \langle u, \psi_{1,n} \rangle \langle v, \psi_{2,n} \rangle  \psi_{3,n}(x)$$
and $C^-(u,v):= C(u,v) - C^+(u,v)$.
By a change of variables we observe that
\begin{equation}\label{57}\| \psi_{3,n} \|_{L^r} = \lambda^{3n/2} \| \psi_3(\lambda^n x)\|_{L^r} = \lambda^{3n/2-3n/r}\| \psi_3(u)\|_{L^r}=\lambda^{3n/2-3n/r}\| \psi_3\|_{L^r}.\end{equation}
By the absolute convergence of the sum, we may bound each component in turn. We begin with $C^-$, which is the more straightforward term. We observe that
$$\| C^-(u,v)\|_{L^r} =\| \sum_{n< 0}\lambda^{5n/2} \langle u, \psi_{1,n} \rangle \langle v, \psi_{2,n} \rangle  \psi_{3,n}(x)\|_{L^r}\leq$$
$$\leq \sum_{n< 0}\abs{\lambda^{5n/2} \langle u, \psi_{1,n} \rangle \langle v, \psi_{2,n} \rangle }\cdot \|\psi_{3,n}(x)\|_{L^r}\leq$$
$$\leq \|\psi_3\|_{L^r}\sum_{n<0} \lambda^{4n-3n/r}\abs{\langle u,\psi_{1,n} \rangle }\abs{\langle v,\psi_{2,n} \rangle }\leq$$
$$\leq \|\psi_3\|_{L^r}\sum_{n<0} \lambda^{4n-3n/r}\|u\|_{L^{p_1}}\|\psi_{1,n}\|_{L^{p_1'}}\|v\|_{L^{p_2}}\|\psi_{2,n}\|_{L^{p_2'}}$$
where the first inequality is the triangle inequality, the second is due to Equation (\ref{57}), and the third inequality is Holder's inequality, where $p'$ denotes the Holder conjugate exponent of a number $p$. Then, by the same reasoning that justifies Equation (\ref{57}), we can perform a change of variables in the integration and get
$$\|C^-(u,v)\|_{L^r} \leq \|\psi_3\|_{L^r}\|\psi_{1}\|_{L^{p_1'}}\|\psi_{2}\|_{L^{p_2'}}\sum_{n<0} \lambda^{7n-3n(1/r+1/p_1'+1/p_2')}\|u\|_{L^{p_1}}\|v\|_{L^{p_2}}.$$
Now by the hypothesis on our exponents, it follows that
$$\frac{1}{r}+\frac{1}{p_1'}+\frac{1}{p_2'}=\frac{1}{r}+2-\frac{1}{p_1}-\frac{1}{p_2}=2+\frac{\gamma}{3}.$$
Thus, we conclude, since $0<\gamma<1$ and $\lambda>1$:
$$\|C^-(u,v)\|_{L^r} \leq \|\psi_3\|_{L^r}\|\psi_{1}\|_{L^{p_1'}}\|\psi_{2}\|_{L^{p_2'}}\sum_{n<0}\lambda^{n(1-\gamma)}\|u\|_{L^{p_1}}\|v\|_{L^{p_2}}\leq K \|u\|_{L^{p_1}}\|v\|_{L^{p_2}}$$
where $K$ depends only on $\lambda, \gamma, \psi_i$. Now we proceed to estimate $C^+$. As with $C^-$ we can use Holder's inequality, a change of variables in the integration, and Lemma \ref{momentuse} (to exchange $\lambda^n$ for a derivative) to get:
$$\| C^+(u,v)\|_{L^r}\leq  \|\psi_3\|_{L^r}\|\psi_{1}\|_{L^{p_1'}}\sup_i \|\Psi_{2,i}\|_{L^{p_2'}}\sum_{n\geq 0} \lambda^{-\gamma n}\|u\|_{L^{p_1}}\|\nabla v\|_{L^{p_2}}$$
where $\textrm{div } \Psi_{2,i} = \psi_{2,i}$. This is where we use the zero-momentum condition.
Because $\gamma>0$ we conclude that
$$\| C^+(u,v)\|_{L^r}\leq  K\|u\|_{L^{p_1}}\|\nabla v\|_{L^{p_2}}$$
where $K$ depends only on $\lambda, \gamma, \psi_i$. Combining our estimates with one final application of the triangle inequality gets us
$$\| C(u,v)\|_{L^r}\leq  K\|u\|_{L^{p_1}}(\|v\|_{L^{p_2}}+\|\nabla v\|_{L^{p_2}})$$
where $K$ depends only on $\lambda, \gamma, \psi_i$. 
\end{proof}
We note that the conclusion of Proposition \ref{LPBOUNDS} is in fact stronger than what is necessary for $C$ to be amenable. 
To finish showing that $C(u,v)$ is an amenable bilinear operator, it remains to prove that $C^1_v(u)=\mathbb{P}C(u,v)$ is a pseudodifferential operator in some class $S^m_{1,1}$. That $C^2_v(u)$ is also in the same symbol class would follow immediately from the symmetry of $C$. 
\begin{proposition}
Let $C(u,v)$ be a symmetric local cascade operator, then $C^1_v(u)$ is a pseudodifferential operator with symbol in the class $S^{5/2}_{1,1}$.
\end{proposition}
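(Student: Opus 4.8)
By linearity it suffices to treat a single basic local cascade operator $C(u,v)=\sum_{n\in\mathbb{Z}}\lambda^{5n/2}\langle u,\psi_{1,n}\rangle\langle v,\psi_{2,n}\rangle\psi_{3,n}(x)$, since a finite sum of $S^{5/2}_{1,1}$ symbols is again in $S^{5/2}_{1,1}$. I would first record the simplification that the Leray projection is invisible here: every $\psi_{3,n}$ is divergence-free, so $C(u,v)$ is divergence-free and $C^1_v(u)=\mathbb{P}C(u,v)=C(u,v)$. Fixing $v\in H^{10}_{df}(\mathbb{R}^3)$ and writing $c_n:=\lambda^{5n/2}\langle v,\psi_{2,n}\rangle$, the operator reads $C^1_v(u)(x)=\sum_n c_n\,\langle u,\psi_{1,n}\rangle\,\psi_{3,n}(x)$. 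Expanding $\langle u,\psi_{1,n}\rangle$ by Plancherel and matching with $a(x,D)u(x)=(2\pi)^{-3}\int e^{ix\cdot\xi}a(x,\xi)\mathcal{F}(u)(\xi)\,d\xi$ yields the matrix-valued candidate symbol
$$a(x,\xi)=e^{-ix\cdot\xi}\sum_{n\in\mathbb{Z}}c_n\,\psi_{3,n}(x)\otimes\overline{\mathcal{F}(\psi_{1,n})(\xi)}.$$
The plan is to verify that $a\in S^{5/2}_{1,1}$; that $a(x,D)$ then genuinely coincides with $C^1_v$ on Schwartz fields is the routine justification of this formal computation, valid because $v\in H^{10}_{df}$ makes the series converge absolutely ($\sum_{n\ge0}\abs{c_n}\lesssim_v\sum_n\lambda^{-15n/2}$ and $\sum_{n<0}\abs{c_n}\lesssim_v\sum_n\lambda^{5n/2}$).

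Three elementary ingredients go into the symbol estimate. First, from $\mathcal{F}(\psi_{i,n})(\xi)=\lambda^{-3n/2}\mathcal{F}(\psi_i)(\lambda^{-n}\xi)$ together with the fact that $\mathcal{F}(\psi_i)$ lives in the fixed bounded annulus $\{1<\abs{\xi}\le(\lambda+1)/2\}$, the factor $\overline{\mathcal{F}(\psi_{1,n})(\xi)}$ is supported in $\{\abs{\xi}\sim\lambda^n\}$ with $\abs{\partial_\xi^\beta\,\overline{\mathcal{F}(\psi_{1,n})(\xi)}}\le C_\beta\lambda^{-3n/2-|\beta|n}$; because $\lambda>1$ these radial shells are pairwise disjoint, so at each fixed $\xi$ only a bounded number of indices $n$ contribute to the series, and for them $\lambda^n\sim\langle\xi\rangle$. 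Second, with $y=\lambda^n x$ one has $\abs{x^{\beta'}\partial_x^\gamma\psi_{3,n}(x)}\le C_{\beta',\gamma}\lambda^{(3/2-|\beta'|+|\gamma|)n}$, uniformly in $x$. Third, Cauchy--Schwarz and $\|\psi_{2,n}\|_2=1$ give the one crude bound $\abs{c_n}\le\lambda^{5n/2}\|v\|_2$, valid for \emph{all} $n\in\mathbb{Z}$; this is the only feature of the $c_n$ I would use, and the weight $\lambda^{5n/2}$ is precisely what forces the order $5/2$.

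Next I would differentiate. Applying $\partial_x^\gamma\partial_\xi^\beta$ to the $n$-th summand of $a$ and expanding by the Leibniz rule over the factors $e^{-ix\cdot\xi}$, $\psi_{3,n}(x)$ and $\overline{\mathcal{F}(\psi_{1,n})(\xi)}$: a $\partial_\xi$ on $\overline{\mathcal{F}(\psi_{1,n})}$ gains $\lambda^{-n}$; a $\partial_\xi$ on $e^{-ix\cdot\xi}$ produces a factor $x_j$ which, absorbed into $x^{\beta'}\psi_{3,n}(x)$ via the second ingredient, also costs $\lambda^{-n}$; a $\partial_x$ on $\psi_{3,n}$ or on $e^{-ix\cdot\xi}$ costs $\lambda^{+n}$. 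Folding in the base powers $\lambda^{\pm 3n/2}$ carried by $\psi_{3,n}$ and $\overline{\mathcal{F}(\psi_{1,n})}$ and the bound $\abs{c_n}\le\lambda^{5n/2}\|v\|_2$, every term of the Leibniz expansion of the $n$-th summand is at most $C_{\beta,\gamma}\|v\|_2\lambda^{(5/2-|\beta|+|\gamma|)n}$ and is supported in $\{\abs{\xi}\sim\lambda^n\}$; summing over the boundedly many contributing $n$, using $\lambda^n\sim\langle\xi\rangle$, and observing that for $\abs{\xi}\le1$ only indices $n\le n_0(\lambda)$ occur (with $\overline{\mathcal{F}(\psi_{1,n})(\xi)}=0$ once $\lambda^{-n}\abs{\xi}$ leaves the annulus, so $a$ and all its $\xi$- and $x$-derivatives are plainly bounded there), I would conclude
$$\abs{\partial_x^\gamma\partial_\xi^\beta a(x,\xi)}\le C_{\beta,\gamma}\,\langle\xi\rangle^{5/2-|\beta|+|\gamma|}\qquad\text{for all }(x,\xi),$$
which is exactly the $S^{5/2}_{1,1}$ condition. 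A finite linear combination of such symbols stays in $S^{5/2}_{1,1}$, settling the general (symmetric) local cascade operator and completing the verification that $C$ is amenable.

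The main obstacle is bookkeeping rather than hard analysis: one must organize the Leibniz expansion so that the $x$-monomials spawned by $\partial_\xi$ falling on the oscillatory factor $e^{-ix\cdot\xi}$ are correctly absorbed into the Schwartz decay of $\psi_{3,n}$, and one must check that the "boundedly many $n$ per $\xi$" reduction and every implied constant depend only on $\lambda$, the profiles $\psi_i$, and $\|v\|_2$ --- never on $n$ or $\xi$. No oscillatory-integral or cancellation argument is needed, which is the soft reason the symbol lands in the borderline class $S^{5/2}_{1,1}$ (and why $\delta=1$, not $\delta<1$, is the right second index: a single $\partial_x$ on $\psi_{3,n}$ costs the full $\lambda^n\sim\langle\xi\rangle$). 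The order $5/2$ cannot be lowered without assuming more regularity or frequency-localization on $v$.
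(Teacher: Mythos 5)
Your argument is correct and follows essentially the same route as the paper: reduce to a single basic local cascade operator, read off the symbol $e^{-ix\cdot\xi}\sum_n \lambda^{5n/2}\langle v,\psi_{2,n}\rangle\,\psi_{3,n}(x)\,\overline{\mathcal{F}(\psi_{1,n})(\xi)}$, and exploit the pairwise disjoint annular supports of the $\mathcal{F}(\psi_{1,n})$ so that only the single $n$ with $\lambda^{n}\sim\abs{\xi}$ contributes at each frequency, whence the weight $\lambda^{5n/2}$ forces order $5/2$ and the cost $\lambda^{\pm n}\sim\langle\xi\rangle^{\pm1}$ per $\xi$- or $x$-derivative forces type $(1,1)$. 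You in fact supply the Leibniz bookkeeping that the paper compresses into ``it follows readily,'' and the one point both write-ups treat informally is the same (the behavior of $\xi$-derivatives of order greater than $5/2$ as $\lambda^{n}\sim\abs{\xi}\to 0$), so no further comparison is needed.
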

\begin{proof}
It suffices to prove the proposition for basic local cascade operators of the form:
$$C(u,v)=\sum_{n\in \mathbb{Z}}\lambda^{5n/2} \langle u, \psi_{1,n} \rangle \langle v, \psi_{2,n} \rangle  \psi_{3,n}(x)$$
where we only deal with scalar functions. Using the fact that the Fourier transform is a unitary operator on $L^2$ and the absolute convergence of the sum, we write
$$C^1_v(u) = \iint \sum_{n\in \mathbb{Z}} \lambda^{5n/2} \psi_{3,n}(x) \langle v,  \psi_{2,n}\rangle\mathcal{F}(\psi_{2,n})(\xi) \mathcal{F}(u)(\xi) d\xi.$$
Expanding this and putting it into the form of a pseudodifferential operator we write (up to some dimensional constant):
$$C^1_v(u) = \iint \bigg(\sum_{n\in \mathbb{Z}} \lambda^{5n/2} \psi_{3}(\lambda^n x) \langle v,  \psi_{2,n}\rangle\mathcal{F}(\psi_{2})(\lambda^{-n}\xi)e^{-i x\cdot \xi} \bigg) \mathcal{F}(u)(\xi)e^{i x\cdot \xi} d\xi.$$
The infinite sum, however, has only one nonzero term for any given choice of $\xi$, because $\mathcal{F}(\psi_2)$ has nicely chosen compact support. In particular, the only nonzero term for any given $\xi$ is the $n^{th}$ term where $\abs{\xi} \sim \lambda^{n}$. It follows readily then that the symbol of the pseudodifferential operator $C^1_v$, the "infinite" sum in the expression above, is in fact in the symbol class $S^{5/2}_{1,1}$. 
\end{proof}
As a consequence of the two previous propositions as well as Theorems \ref{BLOWUP} and \ref{PRTHM}, we have:
\begin{theorem}
\label{MAIN}
Let $C(u,v)$ be the bilinear operator from Theorem \ref{BLOWUP}. Let $T$ be the blowup time to the associated pseudodifferential equation with $1/2<\alpha<5/4$.  Then the closed set $S_T$ has Hausdorff dimension at most $5-4\alpha$.
\end{theorem}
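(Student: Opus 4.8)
The plan is to observe that the operator $C$ of Theorem \ref{BLOWUP} is an amenable bilinear operator, so that Theorem \ref{PRTHM} applies verbatim once we know the blowup solution furnished by Theorem \ref{BLOWUP} is genuinely classical on $[0,T)$. Conditions (1), (2), and (5) of Definition \ref{AMEN} are immediate and were already noted: $C$ is defined on Schwartz vector fields; since $C(u,u)$ is automatically divergence-free we have $\mathbb{P}C(u,u)=C(u,u)$, whence $\langle \mathbb{P}C(u,u),u\rangle=\langle C(u,u),u\rangle=0$; and the scaling identity holds with the chosen $\lambda$. Condition (4), the bilinear $L^r$ estimate, is the content of the first of the two preceding propositions, and condition (3), namely $C^1_v, C^2_v \in OPS^{5/2}_{1,1}$, is the content of the second (symmetry of $C$ transferring the statement from $C^1_v$ to $C^2_v$). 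Moreover, because $\mathbb{P}C(u,u)=C(u,u)$, the equation $\partial_t u+(-\Delta)^\alpha u+C(u,u)=0$ of Theorem \ref{BLOWUP} coincides with Equation (\ref{PEQN}) for $B=C$, and the associated pressure may be taken to be $p\equiv 0$.

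Next I would extract the blowup solution. By Theorem \ref{BLOWUP} there is a Schwartz divergence-free $u_0$ for which no global mild solution exists; standard local well-posedness in $H^{10}_{df}(\mathbb{R}^3)$ (using that $C$ is bilinear and maps boundedly into the relevant spaces) produces a unique maximal mild solution on a maximal interval $[0,T)$, and the absence of a global solution forces $T<\infty$. This $T$ is the first blowup time referred to in the statement.

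The one genuine point to address is upgrading this mild solution to a classical solution, since Theorem \ref{PRTHM} is stated for smooth $u$ and $p$ while Theorem \ref{BLOWUP} only supplies a mild one. I would run a regularity bootstrap on the Duhamel formula
$$u(t)=e^{-t(-\Delta)^\alpha}u_0+\int_0^t e^{(s-t)(-\Delta)^\alpha}C(u(s),u(s))\,ds,$$
using the smoothing of the fractional heat semigroup against the fact that $C$, being a pseudodifferential operator of order $5/2$ in each slot, raises regularity only by a bounded amount, together with persistence of $H^s$ regularity on $[0,T)$ for every $s$; this yields $u\in C^\infty(\mathbb{R}^3\times[0,T))$, and the equation then gives smoothness in time as well. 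Alternatively one exploits the explicit wavelet structure directly: projecting onto the $\psi_{i,n}$ decouples the evolution into the infinite ODE system of Lemma \ref{devep}, whose solution is real-analytic on $[0,T)$ and reassembles into a Schwartz-in-space, smooth-in-time field on compact subintervals of $[0,T)$. I expect this bootstrap to be the main (though essentially routine) obstacle.

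Finally, with $u\in C^\infty(\mathbb{R}^3\times[0,T))$, $p\equiv 0$, $B=C$ amenable, and the given $\alpha\in(3/4,5/4)$, Theorem \ref{PRTHM} applies directly and its conclusion is precisely that the closed set $S_T$ has Hausdorff dimension at most $5-4\alpha$, which is the assertion of Theorem \ref{MAIN}. $\qed$
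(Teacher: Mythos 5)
Your proposal follows the same route as the paper: verify that $C$ satisfies all five conditions of amenability (using the two propositions of Section 4.2 for conditions (3) and (4), and the immediate observations for (1), (2), (5)), then apply Theorem \ref{PRTHM} to the blowup solution furnished by Theorem \ref{BLOWUP}. You go a step further than the paper's terse ``As a consequence of\ldots'' by explicitly flagging that Theorem \ref{BLOWUP} produces only an $H^{10}_{df}$ mild solution while Theorem \ref{PRTHM} is stated for smooth $u$ and $p$; that is a genuine gap in the paper's exposition, and either of the two bootstrap arguments you sketch (Duhamel smoothing or the wavelet-diagonal ODE structure, using that each $\psi_{i,n}$ is Schwartz with annular Fourier support so that persistence of $H^s$ regularity for all $s$ follows) is the right way to close it.
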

We recall here the following general theorem about local cascade operators proven by Tao in \cite{T}.
\begin{theorem}[Theorem 3.2 in \cite{T}]
Let $\lambda_0>1$ be an absolute constant sufficiently close to $1$. Then every local cascade operator (not necessarily zero-momentum) is an averaged (in the sense of Equation (\ref{AVERAGE})) Euler bilinear operator.
\end{theorem}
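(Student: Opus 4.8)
The plan is to follow \cite{T}; we sketch the argument. Recall the notion involved: a bilinear operator is an \emph{averaged Euler bilinear operator} if it can be written as a superposition, over a finite measure space, of Littlewood--Paley-localized copies of $\mathcal{E}$ pre- and post-composed with rotations of $\mathbb{R}^3$ (acting simultaneously on the spatial variable and on the vector components by $(Rf)(x)=R f(R^{-1}x)$), reflections, and order-zero Fourier multipliers, the three frequency localizations being constrained to obey the Littlewood--Paley trichotomy. Since a local cascade operator is by definition a finite linear combination of basic ones, and since a finite linear combination of averaged Euler operators is again one (take the disjoint union of the underlying measure spaces, reweighted by the coefficients, the signs absorbed into the $\{\pm1\}$ factor of the averaging), it suffices to treat a single basic local cascade operator
$$C(u,v) = \sum_{n\in\mathbb{Z}} \lambda^{5n/2}\langle u,\psi_{1,n}\rangle\langle v,\psi_{2,n}\rangle\psi_{3,n}.$$

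Next I would reduce to a single frequency scale using equivariance. Both $\mathcal{E}$ and $C$ are equivariant under the dilation $u\mapsto\lambda^{3/2}u(\lambda\,\cdot)$: for $\mathcal{E}$ this is the homogeneity of the Euler nonlinearity, and for $C$ it is hard-wired into the rescaled profiles $\psi_{i,n}=\lambda^{3n/2}\psi_i(\lambda^n\,\cdot)$. Moreover each $\widehat{\psi_{i,n}}$ is supported in the thin shell $\lambda^n\{1<|\xi|\le(\lambda+1)/2\}$, and for $\lambda>1$ these shells are pairwise disjoint as $n$ ranges over $\mathbb{Z}$, so $C=\sum_n C_n$ decomposes as a scale-equivariant sum of frequency-localized pieces, $C_n$ being the dilate of the single cell $C_0(u,v):=\langle u,\psi_1\rangle\langle v,\psi_2\rangle\psi_3$, localized to frequency $\sim1$ in all three slots. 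Thus it is enough to exhibit $C_0$ as a finite average over $SO(3)^3\times\{\pm1\}$ (allowing also order-zero multipliers) of the correspondingly localized Euler interactions $\langle w,\,R_3\tilde P_0\,\mathcal{E}(R_1\tilde P_0 u,\,R_2\tilde P_0 v)\rangle$, where $\tilde P_0$ is the Littlewood--Paley cutoff to frequencies $\sim1$; applying the dilation and summing over $n$ then reassembles $C$, the sum over scales converging in exactly the sense the Littlewood--Paley decomposition of $\mathcal{E}$ itself does.

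The core is the single-scale identity, which I would establish on the Fourier side. Writing $\xi$ for the frequency of $u$ and $\eta$ for that of $v$, the bilinear symbol of the localized Euler cell is $\sigma_\mathcal{E}(\xi,\eta)=i\,\mathbb{P}_{\xi+\eta}\big((\,\cdot\,)(\eta\cdot)\big)$, suitably symmetrized and cut off to $|\xi|\sim|\eta|\sim|\xi+\eta|\sim1$, whereas the symbol of $C_0$ is the explicit smooth compactly supported tensor $\overline{\widehat{\psi_1}(\xi)}\,\overline{\widehat{\psi_2}(\eta)}\,\widehat{\psi_3}(\xi+\eta)$. The plan is: (i) choose the balls $B_i$ in sufficiently generic position that on the frequency configurations that actually occur, $\eta$ and $\xi+\eta$ are never parallel and $\mathbb{P}_{\xi+\eta}$ does not annihilate the direction singled out by $\widehat{\psi_3}$ (here one uses that each $\psi_i$ is divergence-free, so $\widehat{\psi_i}$ is transverse to the corresponding frequency), making $\sigma_\mathcal{E}$ non-degenerate there; (ii) because the shells are thin --- this is exactly where $\lambda_0$ close to $1$ enters --- on each shell $\sigma_\mathcal{E}$ is nearly constant and its $SO(3)^3$-orbit sweeps out, locally and with uniformly bounded distortion, every admissible symbol; (iii) a partition of unity on $SO(3)^3$ followed by division by $\sigma_\mathcal{E}$ (legitimate by (i)) then expresses the symbol of $C_0$ as an integral of rotated Euler symbols against an $L^1$ weight, after inserting an order-zero multiplier to carry the smooth ratio and a sign to fix orientation; the $L^1$ weight is the density of the desired finite measure. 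Finally I would record that rotations, reflections, order-zero multipliers and Littlewood--Paley cutoffs all preserve the divergence-free condition and the requisite $H^{10}_{df}$ mapping properties, so the assembled object is genuinely an averaged Euler bilinear operator.

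I expect the main obstacle to be the single-scale spanning step (ii)--(iii): proving that the explicit thin-shell cell $C_0$ lies in the $L^1$-span of the rotated, frequency-localized Euler interactions with \emph{finite} total mass. This is precisely where the hypothesis that $\lambda_0$ is sufficiently close to $1$ is consumed --- thinness of the shells forces the $SO(3)^3$-orbit of the Euler symbol to be locally full and the representing weight to be bounded --- and it is where one must verify by hand, using the structure of the Leray projection, the non-degeneracy of the one-derivative Euler symbol on the chosen frequency geometry.
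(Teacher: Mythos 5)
This theorem is cited by the paper from Tao \cite{T} (his Theorem~3.2) and is not re-proved here, so there is no in-paper proof to compare against; I am judging your sketch against what Tao's argument must actually accomplish.

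Your scaffolding is reasonable --- reduce to a single basic cascade operator, collapse the $n$-sum by dilation covariance (the disjointness of the frequency shells lets the per-scale multipliers be summed into one order-zero multiplier, with rotations and weight independent of the scale, so the measure stays finite), and realize the single-scale cell via rotations, order-zero multipliers and reflections acting on $\mathcal{E}$ --- and you correctly flag non-degeneracy of the Euler trilinear form and $\lambda_0\approx 1$ as the key ingredients. But the core of your single-scale step contains a genuine conceptual error. You write the ``symbol of $C_0$'' as $\overline{\widehat{\psi_1}(\xi)}\,\overline{\widehat{\psi_2}(\eta)}\,\widehat{\psi_3}(\xi+\eta)$, i.e.\ as a Coifman--Meyer bilinear multiplier living on the convolution slice $\{\zeta=\xi+\eta\}$. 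The cascade cell is not a bilinear multiplier: its trilinear form $\langle u,\psi_1\rangle\langle v,\psi_2\rangle\langle w,\psi_3\rangle$ is a product of three independent pairings, whose Fourier-side kernel $\overline{\widehat{\psi_1}(\xi_1)}\,\overline{\widehat{\psi_2}(\xi_2)}\,\overline{\widehat{\psi_3}(\xi_3)}$ lives on the full nine-dimensional product of the supports of $\widehat{\psi_i}$ with no $\delta(\xi_1+\xi_2+\xi_3)$ factor. No amount of dividing by $\sigma_{\mathcal{E}}$ on the diagonal slice can produce a kernel supported off it. This is precisely why Tao's definition of an averaged Euler operator equips each of the three arguments with its own rotation $R_i$: on the Fourier side the rotated Euler form is supported on the codimension-three surface $\{R_1^{-1}\xi_1+R_2^{-1}\xi_2+R_3^{-1}\xi_3=0\}$, and as $(R_1,R_2,R_3)$ ranges over $SO(3)^3$ these surfaces foliate the open set of frequency triples obeying the triangle inequality for moduli (the Littlewood--Paley trichotomy, satisfied with room to spare on the thin shells when $\lambda_0$ is close to $1$). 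The single-scale matching is therefore a disintegration of the thin-shell product measure along this rotation foliation, with the smooth density ratio carried by the order-zero multipliers $m_{i,\omega}$ --- not a pointwise division of bilinear symbols on the convolution diagonal. Your step (ii) gestures toward this orbit-sweeping, but step (iii), as written, addresses the wrong geometry and would fail; and it is the foliation, not diagonal symbol division, that consumes both the non-degeneracy of the Euler symbol and the closeness of $\lambda_0$ to $1$.
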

Due to the theorem above, we have the following corollary of Theorem \ref{MAIN}  (taking $\lambda$ near enough to $1$):
\begin{corollary}
\label{MAIN2}
Let $1/2<\alpha<5/4$ be arbitrary.
There exists a symmetric averaged Euler bilinear operator obeying the cancellation identity for $u\in H^{10}_{df}(\mathbb{R}^3)$ and a Schwartz divergence-free vector initial vector field so that there is no global-in-time solution to the associated $\alpha$-dissipative pseudodifferential equation Moreover if $T$ is the time of first blowup, then the closed set $S_T$ has Hausdorff dimension at most $5-4\alpha$.
\end{corollary}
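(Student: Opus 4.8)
The plan is to re-run the constructions of Sections 4.1 and 4.2 with the rescaling parameter $\lambda$ pinned close enough to $1$ that Tao's structural theorem applies, and then to read off the Hausdorff-dimension bound from Theorem \ref{MAIN}. Let $\lambda_0 > 1$ be the absolute constant of Tao's Theorem 3.2 in \cite{T}, below which every local cascade operator is an averaged Euler bilinear operator, and fix any $\lambda$ with $1 < \lambda < \min(2,\lambda_0)$.

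First I would apply Theorem \ref{BLOWUP} with this $\lambda$ and the given $\alpha \in (3/4,5/4)$, which lies in the admissible range $(0,5/4)$. This furnishes a zero-momentum symmetric local cascade operator $C$ and a Schwartz divergence-free field $u_0$ for which the cancellation identity $\langle C(u,u),u\rangle = 0$ holds on $H^{10}_{df}(\mathbb{R}^3)$ and the associated $\alpha$-dissipative pseudodifferential equation admits no global mild solution from $u_0$. Since $C(u,u)$ is automatically divergence-free, $\mathbb{P}C(u,u) = C(u,u)$, so the cancellation identity in the form \eqref{Cancel} holds as well.

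Next I would invoke the two propositions of Section 4.2 — the bilinear bound \eqref{amenineq} and the fact that $C^1_v, C^2_v$ lie in $S^{5/2}_{1,1}$ — together with the trivial verification of the remaining clauses (definition on Schwartz fields, cancellation, and the scaling property with this same $\lambda$) to conclude that $C$ is \textbf{amenable} in the sense of Definition \ref{AMEN}. Theorem \ref{PRTHM} (equivalently, Theorem \ref{MAIN}) then applies verbatim: letting $T$ be the first blowup time of the solution issuing from $u_0$, the closed singular set $S_T$ has Hausdorff dimension at most $5 - 4\alpha$. Finally, because $\lambda < \lambda_0$, Tao's Theorem 3.2 identifies the local cascade operator $C$ as an averaged Euler bilinear operator; taking $B := C$ completes the proof.

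I anticipate essentially no obstacle; the only point meriting attention is that the two constraints on $\lambda$ — the interval $(1,2)$ required by Theorem \ref{BLOWUP} and the threshold $\lambda < \lambda_0$ coming from Tao's averaging theorem — are simultaneously satisfiable, which holds because both intervals contain values arbitrarily near $1$. Any $\lambda$ in the (nonempty) intersection then serves at once for the blowup construction, for the amenability and partial-regularity machinery, and for the averaged-Euler representation.
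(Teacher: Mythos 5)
Your proposal is correct and follows essentially the same route the paper takes: it unpacks the paper's one-line remark preceding Corollary \ref{MAIN2}, namely that choosing $\lambda$ close enough to $1$ lets Tao's Theorem 3.2 upgrade the local cascade operator of Theorem \ref{MAIN} to an averaged Euler bilinear operator, while the blowup (Theorem \ref{BLOWUP}) and partial-regularity machinery (amenability of $C$ plus Theorem \ref{PRTHM}) are untouched by that restriction on $\lambda$. The only cosmetic quibble is calling Theorems \ref{PRTHM} and \ref{MAIN} "equivalent" — \ref{MAIN} is the specialization of \ref{PRTHM} to the cascade operator $C$ — but this does not affect the argument.
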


\section{Technical Lemmas}
Throughout this subsection we shall work in the general Euclidean space $\mathbb{R}^n$. All our spaces will be of functions with domain $\mathbb{R}^n$, unless otherwise stated.
Our shorthand for the $L^p$ norm will be $\|\cdot \|_p$, and we denote the Fourier transform by $\mathcal{F}$. We denote the fractional (inhomogeneous) Sobolev space of order $s$ and integrability $p$ by:
$$W^{s,p}(\mathbb{R}^n):= \bigg\{f\in L^p(\mathbb{R}^n) : \mathcal{F}^{-1}\big((1+\abs{\xi}^2)^{\frac{s}{2}}\mathcal{F}(f)(\xi)\big)\in L^p(\mathbb{R}^n)\bigg\}.$$
Lastly, we denote $W^{s,2}(\mathbb{R}^n)$ by $H^s(\mathbb{R}^n)$ and use the shorthand $\|\cdot \|_s$ for the norm of $H^s(\mathbb{R}^n)$. Since we almost always deal with the spaces $H^s(\mathbb{R}^n)$, the use of this shorthand will always be clear in context.
The following embedding theorem is standard. A proof may be found in \cite{BCD}.
\begin{lemma}[Theorem 1.66 in \cite{BCD}]
\label{EMBED}
The space $H^s(\mathbb{R}^n)$ embeds continuously in the H\"{o}lder Space $C^{k,\alpha}(\mathbb{R}^n)$ provided that $s\geq n/2 +k+ \alpha$.
\end{lemma}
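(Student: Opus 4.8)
The plan is to prove this classical Sobolev embedding (attributed here to \cite{BCD}) by the standard Fourier-analytic argument, in three steps: (i) reduce to the case $k=0$; (ii) establish the $L^\infty$ bound together with continuity; and (iii) establish the H\"older-seminorm bound by splitting the Fourier integral dyadically at frequency $|x-y|^{-1}$.

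For step (i): if $f\in H^s$ with $s\ge n/2+k+\alpha$, then every derivative $D^\beta f$ with $|\beta|\le k$ lies in $H^{s-|\beta|}$ with $\|D^\beta f\|_{H^{s-|\beta|}}\le\|f\|_{H^s}$, and $s-|\beta|\ge n/2+(k-|\beta|)+\alpha\ge n/2+\alpha$. Hence, once the embedding $H^\sigma\hookrightarrow C^{0,\alpha}$ is known for every $\sigma\ge n/2+\alpha$, one gets $\|D^\beta f\|_{C^{0,\alpha}}\le K\|D^\beta f\|_{H^{s-|\beta|}}\le K\|f\|_{H^s}$, and summing over $|\beta|\le k$ yields $f\in C^{k,\alpha}$ with the claimed bound. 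Moreover, since the inhomogeneous norms obey $\|g\|_{H^{n/2+\alpha}}\le\|g\|_{H^\sigma}$ whenever $\sigma\ge n/2+\alpha$, it suffices to treat $\sigma=n/2+\alpha$.

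For steps (ii)--(iii): fix $f\in H^\sigma$ with $\sigma=n/2+\alpha$. Because $\sigma>n/2$, Cauchy--Schwarz gives $\|\mathcal{F}f\|_{L^1}\le\|f\|_{H^\sigma}\big(\int(1+|\xi|^2)^{-\sigma}\,d\xi\big)^{1/2}<\infty$, so by Fourier inversion $f$ has a bounded continuous representative with $\|f\|_\infty\le K\|f\|_{H^\sigma}$. For the seminorm, given $x\ne y$ I would write $f(x)-f(y)=c_n\int\mathcal{F}f(\xi)\big(e^{ix\cdot\xi}-e^{iy\cdot\xi}\big)\,d\xi$ and split the integral at $|\xi|=R:=|x-y|^{-1}$. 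On $\{|\xi|\le R\}$ one uses $|e^{ix\cdot\xi}-e^{iy\cdot\xi}|\le|x-y|\,|\xi|$ and Cauchy--Schwarz together with the routine computation $\int_{|\xi|\le R}|\xi|^2(1+|\xi|^2)^{-\sigma}\,d\xi\le KR^{2-2\alpha}$ (this is where $\alpha<1$ enters), bounding this piece by $K|x-y|\,R^{1-\alpha}\|f\|_{H^\sigma}=K|x-y|^\alpha\|f\|_{H^\sigma}$. On $\{|\xi|>R\}$ one uses the trivial bound $|e^{ix\cdot\xi}-e^{iy\cdot\xi}|\le 2$ and $\int_{|\xi|>R}(1+|\xi|^2)^{-\sigma}\,d\xi\le KR^{-2\alpha}$ to obtain $K R^{-\alpha}\|f\|_{H^\sigma}=K|x-y|^\alpha\|f\|_{H^\sigma}$. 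Adding the two pieces gives $[f]_{C^{0,\alpha}}\le K\|f\|_{H^\sigma}$, completing step (iii).

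I expect the only delicate point to be the endpoint $s=n/2+k+\alpha$, which is exactly what the dyadic split is designed for: away from the endpoint one may instead bound $|e^{ix\cdot\xi}-e^{iy\cdot\xi}|\le 2^{1-\alpha}(|x-y|\,|\xi|)^\alpha$ and integrate in a single step, but at the endpoint that single estimate produces a logarithmically divergent integral. The same computation shows why the result is understood for non-integer H\"older exponents ($0<\alpha<1$): at $\alpha=1$ the low-frequency integral diverges, consistent with the failure $H^{n/2+1}(\mathbb{R}^n)\not\hookrightarrow\mathrm{Lip}(\mathbb{R}^n)$. An equivalent route would replace the explicit frequency split by a Littlewood--Paley decomposition combined with Bernstein's inequality, using only the dyadic machinery already introduced earlier in the paper.
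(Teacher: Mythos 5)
Your proof is correct and complete, but note that the paper itself gives no argument for this lemma; it is invoked as a black box via the citation to Theorem 1.66 of \cite{BCD}, so there is no in-paper proof to compare against. The argument you give is the standard one for the endpoint case: reduce to $k=0$, get $\mathcal{F}f\in L^1$ by Cauchy--Schwarz for the $L^\infty$ and continuity part, and for the $C^{0,\alpha}$ seminorm split the Fourier integral at $|\xi|=|x-y|^{-1}$, using $|e^{ix\cdot\xi}-e^{iy\cdot\xi}|\le|x-y|\,|\xi|$ on low frequencies and the trivial bound on high frequencies. One small point worth making explicit: the low-frequency estimate $\int_{|\xi|\le R}|\xi|^2(1+|\xi|^2)^{-\sigma}\,d\xi\le KR^{2-2\alpha}$ is the one that needs $R\gtrsim1$ (equivalently $|x-y|\lesssim1$); for $|x-y|\ge1$ you should instead fall back on the already-established $L^\infty$ bound, $|f(x)-f(y)|\le2\|f\|_\infty\le2\|f\|_\infty|x-y|^\alpha$. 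You also correctly observe the implicit convention $0<\alpha<1$ (both $\alpha=0$ and $\alpha=1$ fail at the endpoint). For comparison, the treatment in \cite{BCD} (which the paper relies on) runs the same frequency-splitting idea through the Littlewood--Paley machinery and the Besov characterization $C^{k,\alpha}=B^{k+\alpha}_{\infty,\infty}$ together with $H^s=B^s_{2,2}\hookrightarrow B^{s-n/2}_{\infty,\infty}$; that route is more systematic and generalizes immediately to the whole Besov scale, whereas your direct Fourier-split is more elementary and self-contained, at the cost of not exposing the underlying Besov structure. Either is a legitimate proof of the stated lemma.
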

We shall also require the use of the Schwartz-Paley-Wiener theorem, which we state as the following lemma.
\begin{lemma}\label{SPW}
The vector space $C^\infty_c(\mathbb{R}^n)$ of smooth and compactly supported functions is algebraically and topologically isomorphic, via the Fourier transform, to the space of entire functions $F$ on $\mathbb{C}^n$ which satisfy the following growth bounds: there exists $B>0$ such that for every $M\in \mathbb{Z}^+$ there exists real constant $K_M$ such that for all $\xi\in \mathbb{C}^n$,
$$\abs{F(\xi)} \leq K_M (1+\abs{\xi})^{-M} e^{B\abs{\textrm{Im }(\xi)}}.$$
\end{lemma}
\subsection{Pseudodifferential Operators}
 A pseudodifferential operator is an operator $P$ on Schwartz functions $u\in \mathcal{S}$ with the property that:
$$P(x,D)u := (2\pi)^{-n/2}\int e^{i x\cdot \xi}p(x,\xi)\mathcal{F}(u)(\xi)d\xi.$$
where $p(x,\xi)$ is a smooth function on $\mathbb{R}^{2n}$. It is clear that such operators map the Schwartz space into itself. We can write $Pu=\mathcal{F}^{-1}(p\cdot\mathcal{F}(u))$ if the function $p(x,\xi)$ is a function of $\xi$ only. These operators are often called Fourier multipliers. Operators that are Fourier multipliers commute. In particular, if $P_1, P_2$ are two Fourier multipliers, then $P_1P_2= P_2 P_1$.

We call $p(x,\xi)$ the symbol of $P$, and we say that $p$ is in the symbol class $S^m_{\rho,\delta}$ where $m\in \mathbb{R}, 0\leq \rho,\delta\leq 1$ if and only if 
$$\abs{D^{\beta}_xD^\alpha_\xi p(x,\xi)} \leq C_{\alpha,\beta}((1+\abs{\xi}^2)^{1/2})^{m-\rho\abs{\alpha}+\delta\abs{\beta}}$$
for some constants $C_{\alpha,\beta}$. The associated operator $P$ is said to be in the class $OPS^m_{\rho,\delta}$. If $\delta<1$, pseudodifferential operators in the class $S^m_{\rho,\delta}$ can be defined instead on the dual of the Schwartz space, which we denote $\mathcal{S}'$. 

Let $\Lambda^s$ be the pseudodifferential operator with symbol $(1+\abs{\xi}^2)^{s/2}$, which is in the class $S^s_{1,0}$. We may recognize the Sobolev space $H^s(\mathbb{R}^n)$ as $\Lambda^{-s}L^2(\mathbb{R}^n)$.
We call a pseudodifferential operator, $P$, Sobolev-smoothing if and only if $P(H^s)\subset H^t$ for any real numbers $s$ and $t$. In other words, a Sobolev-smoothing operator $P$ continuously maps any Sobolev space into any other Sobolev space. We also define the symbol class $S^{-\infty}_{\rho,\delta}$ to be
$$S^{-\infty}_{\rho,\delta} := \bigcap_{m\in \mathbb{R}} S^m_{\rho,\delta}$$
However, there is really only one class of symbols of infinitely negative order, a fact which we note below.
\begin{lemma}
Let $0\leq \rho,\delta \leq 1$ and let $p(x,\xi)\in S^{-\infty}_{\rho,\delta}$. Then $p(x,\xi)\in S^{-\infty}_{1,0}$.
\end{lemma}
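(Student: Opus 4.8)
The plan is to unwind the definitions and observe that passing from the $(\rho,\delta)$ scale of symbol classes to the $(1,0)$ scale only shifts the exponent in the defining inequality by an amount depending on $|\alpha|$ and $|\beta|$ but \emph{not} on $\xi$; since membership in $S^{-\infty}$ is an intersection over \emph{all} orders $m$, this finite shift is harmless. Concretely, fix an arbitrary target order $m'\in\mathbb{R}$; it suffices to show $p\in S^{m'}_{1,0}$, i.e.\ that for every pair of multi-indices $(\alpha,\beta)$ there is a constant $C_{\alpha,\beta}$ with
$$
|D^\beta_x D^\alpha_\xi p(x,\xi)|\leq C_{\alpha,\beta}\,(1+|\xi|^2)^{(m'-|\alpha|)/2}.
$$

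First I would fix $(\alpha,\beta)$ and set $m:=m'-(1-\rho)|\alpha|-\delta|\beta|$, which is a well-defined real number since $0\le\rho\le 1$ and $0\le\delta\le 1$. Because $p\in S^{-\infty}_{\rho,\delta}=\bigcap_{m\in\mathbb{R}}S^m_{\rho,\delta}$, in particular $p\in S^m_{\rho,\delta}$ for this choice of $m$, so there is a constant $C_{\alpha,\beta,m}$ with
$$
|D^\beta_x D^\alpha_\xi p(x,\xi)|\leq C_{\alpha,\beta,m}\,(1+|\xi|^2)^{(m-\rho|\alpha|+\delta|\beta|)/2}.
$$
Now I would simply compute the exponent: $m-\rho|\alpha|+\delta|\beta| = m'-(1-\rho)|\alpha|-\delta|\beta|-\rho|\alpha|+\delta|\beta| = m'-|\alpha|$, so the right-hand side is exactly $C_{\alpha,\beta,m}(1+|\xi|^2)^{(m'-|\alpha|)/2}$. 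Taking $C_{\alpha,\beta}:=C_{\alpha,\beta,m}$ gives the desired bound. Since $(\alpha,\beta)$ was arbitrary, $p\in S^{m'}_{1,0}$, and since $m'$ was arbitrary, $p\in\bigcap_{m'\in\mathbb{R}}S^{m'}_{1,0}=S^{-\infty}_{1,0}$.

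There is essentially no obstacle here: the statement is a bookkeeping lemma reconciling two filtration scales on symbols, and the only thing to be careful about is that the compensating shift $-(1-\rho)|\alpha|-\delta|\beta|$ has the correct sign (it is $\le 0$, so it is genuinely a shift toward \emph{more} negative order, which is allowed precisely because we are inside $S^{-\infty}$). If one wanted extra slack one could instead take any $m\le m'-(1-\rho)|\alpha|-\delta|\beta|$ and use $(1+|\xi|^2)\ge 1$ to compare the powers, but the equality above already closes the argument cleanly.
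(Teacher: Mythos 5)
Your argument is correct and is essentially the same as the paper's: both fix the multi-indices, choose $m$ in the $(\rho,\delta)$ scale shifted by $-(1-\rho)\abs{\alpha}-\delta\abs{\beta}$ (the paper takes a strict inequality and uses $(1+\abs{\xi}^2)\geq 1$, you take equality), and conclude by the arbitrariness of the target order. No gap.
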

\begin{proof}
Suppose $p(x,\xi)\in S^{-\infty}_{\rho,\delta}$. Let $\alpha,\beta$ and real number $N$ be arbitrary. By hypothesis we can choose $m<N-(1-\rho)\abs{\alpha}-\delta\abs{\beta}$ so that for all $(x,\xi)$:
$$\abs{D^\beta_x D^\alpha_\xi p(x,\xi)}\leq C_{\alpha,\beta} (1+\abs{\xi}^2)^{(m-\rho\abs{\alpha}+\delta\abs{\beta})/2}\leq C_{\alpha,\beta} (1+\abs{\xi}^2)^{(N-\abs{\alpha})/2}.$$
The arbitrariness of $N$ proves that $p(x,\xi)\in S^{-\infty}_{1,0}$.
\end{proof}
Henceforth we denote $S^{-\infty}_{1,0}$ by $S^{-\infty}$. We now recall three standard lemmas below. The proofs may be found in Chapter 0 of \cite{TA}.
\begin{lemma}
\label{pseudoproduct}
Let $P_j \in OPS^{m_j}_{\rho_j,\delta_j}$. Let $\rho = \min(\rho_1,\rho_2)$ and $\delta = \max(\delta_1,\delta_2)$. Suppose
$$0\leq \delta_2 <\rho_1\leq 1.$$
Then $P_1P_2\in OPS^{m_1+m_2}_{\rho,\delta}$, and we have $P_1P_2=T_N+R_N$ where
$$T_N(x,\xi)= \sum_{\abs{\alpha}\leq N} \frac{i^{\abs{\alpha}}}{\alpha!} D^\alpha_\xi p_1(x,\xi)D^\alpha_x p_2(x,\xi)$$
and $R_N \in OPS^{m_1+m_2-N(\rho-\delta)}_{\rho,\delta}$.
\end{lemma}
\begin{lemma}
\label{pseudocont}
If $P \in OPS^m_{\rho,\delta}$, $\rho>0$, and $m<-n+\rho(n-1)$, then 
$$P : L^p(\mathbb{R}^n) \to L^p(\mathbb{R}^n),\quad \forall \hspace{10px}1\leq p\leq \infty$$
is continuous.
\end{lemma}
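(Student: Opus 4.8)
The plan is to realize $P$ as an integral operator against its Schwartz kernel and to obtain the endpoint bounds $P:L^1\to L^1$ and $P:L^\infty\to L^\infty$ by a Schur-type test, then interpolate for $1<p<\infty$. Write $Pu(x)=\int k(x,x-y)\,u(y)\,dy$ with $k(x,z)=(2\pi)^{-n}\int e^{iz\cdot\xi}p(x,\xi)\,d\xi$; this is legitimate first for Schwartz $u$ and will extend to $L^p$ by density once the bound is established. Both endpoint estimates follow from the single uniform kernel bound $\sup_{x}\int_{\mathbb{R}^n}|k(x,z)|\,dz<\infty$: for $L^\infty$ one has $|Pu(x)|\le\|u\|_\infty\int|k(x,z)|\,dz$, and for $L^1$ one writes $\int|Pu(x)|\,dx\le\int|u(y)|\big(\int|k(x,x-y)|\,dx\big)dy$ and changes variables $z=x-y$ in the inner integral, using that the pointwise bound on $k(x,z)$ will be uniform in $x$.

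The core of the proof is the pointwise estimate on $k(x,z)$, which I would obtain from a dyadic decomposition of the symbol in $\xi$ together with repeated integration by parts in $\xi$. Split $p(x,\cdot)$ into a compactly supported piece (whose kernel is smooth and $O(1)$) plus a sum $\sum_{j\ge 0}p_j(x,\cdot)$ with $p_j$ supported in $|\xi|\sim 2^j$, and set $k_j(x,z)=(2\pi)^{-n}\int e^{iz\cdot\xi}p_j(x,\xi)\,d\xi$. The symbol estimate $|\partial_\xi^\alpha p(x,\xi)|\lesssim(1+|\xi|)^{m-\rho|\alpha|}$ --- which does not involve $\delta$ and is uniform in $x$ --- gives $|k_j(x,z)|\lesssim\min\big(2^{j(m+n)},\,|z|^{-N}2^{j(m+n-\rho N)}\big)$ for every $N$. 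Summing over $j$, splitting the sum at $2^j\sim|z|^{-1/\rho}$, yields, uniformly in $x$: for $0<|z|\le 1$, $|k(x,z)|\lesssim|z|^{-(m+n)/\rho}$ when $m+n>0$ (and $|k(x,z)|\lesssim\log(1/|z|)$, resp. $\lesssim 1$, when $m+n=0$, resp. $m+n<0$); and for $|z|\ge 1$, $|k(x,z)|\le C_N|z|^{-N}$ for every $N$. The rapid decay at large $|z|$ is exactly where $\rho>0$ is used, since it lets us integrate by parts arbitrarily many times.

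Feeding these into the integral: $\int_{|z|\ge 1}|z|^{-N}\,dz<\infty$ once $N>n$, and $\int_{|z|\le 1}|z|^{-(m+n)/\rho}\,dz<\infty$ precisely when $(m+n)/\rho<n$, i.e. when $m<-n(1-\rho)$. This is implied by the hypothesis $m<-n+\rho(n-1)$, since $-n+\rho(n-1)=-n(1-\rho)-\rho<-n(1-\rho)$; in fact the argument needs only $m<-n(1-\rho)$. Hence $\sup_x\int|k(x,z)|\,dz<\infty$, giving the $L^1$ and $L^\infty$ bounds, and the Riesz--Thorin interpolation theorem yields continuity on every intermediate $L^p$. (For $1<p<\infty$ one could alternatively check the H\"ormander cancellation condition on $k$ and invoke Calder\'on--Zygmund theory, but interpolating off the endpoints is cleaner.)

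The main obstacle is the bookkeeping in the near-origin kernel estimate: organizing the dyadic sum and the integration-by-parts count so that the sharp exponent $|z|^{-(m+n)/\rho}$ emerges, and --- crucially --- checking that every constant is uniform in $x$. That uniformity is what makes the proof robust: the bounds on $\partial_\xi^\alpha p(x,\xi)$ that drive everything are $x$-independent and insensitive to $\delta$, so the same argument covers all $\delta\in[0,1]$, including the class $OPS^m_{1,1}$ (for which the hypothesis reads $m<-1$) used elsewhere in the paper. A minor point to note in passing is that $P$ is a priori defined only on Schwartz functions (or on $\mathcal{S}'$ when $\delta<1$), so one establishes the inequality there and extends by density to $L^p$.
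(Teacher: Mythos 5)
Your argument is correct: the dyadic kernel estimates, the resulting uniform bound $\sup_x\int|k(x,z)|\,dz<\infty$, the two endpoint bounds $L^1\to L^1$ and $L^\infty\to L^\infty$, and Riesz--Thorin all go through, and the hypothesis $m<-n+\rho(n-1)$ does imply the condition $m<-n(1-\rho)$ that your kernel integrability actually requires. The paper itself gives no proof of this lemma---it simply cites Chapter 0 of \cite{TA}---and your proof is essentially the standard kernel-estimate argument found there, so there is nothing substantive to compare.
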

\begin{lemma}
\label{L2CONT}
If $P \in OPS^0_{\rho,\delta}$ and $0\leq \delta <\rho  \leq 1$, then 
$$P: L^2(\mathbb{R}^n)\to L^2(\mathbb{R}^n)$$ is continuous.
\end{lemma}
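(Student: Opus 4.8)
\noindent\emph{Proof proposal.} This is a special case of the Calder\'on--Vaillancourt theorem, and the natural route is an almost-orthogonality argument built on the Cotlar--Stein lemma. I sketch the plan, emphasizing where the strict inequality $\delta<\rho$ is used.

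First I would decompose $P$ into phase-space localized pieces. Fix an inhomogeneous Littlewood--Paley partition $1=\varphi_0+\sum_{j\ge 1}\varphi_j$ in the frequency variable, with $\varphi_0$ supported in $\abs{\xi}\le 2$ and $\varphi_j$ supported in $2^{j-1}\le\abs{\xi}\le 2^{j+1}$; the piece $p\varphi_0$ gives an operator with a rapidly decaying kernel, trivially bounded on $L^2$, so I concentrate on $j\ge 1$. On the $j$-th shell, where $(1+\abs{\xi}^2)^{1/2}\sim 2^j$, I would further decompose $\mathbb{R}^n_\xi$ into balls of radius $\sim 2^{j\rho}$ and $\mathbb{R}^n_x$ into cubes of side $\sim 2^{-j\delta}$, and multiply $p$ by the corresponding product cutoffs. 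This yields $p\varphi_j=\sum_{\iota}p_\iota$, $\iota=(j,\mu,\nu)$, with every $p_\iota\in S^0_{\rho,\delta}$ with seminorms bounded uniformly in $\iota$, and $P=P_0+\sum_\iota P_\iota$ with $P_\iota=p_\iota(x,D)$. For each fixed $\iota$ I would apply the unitary change of variables (translation and dilation in $x$, modulation in $\xi$) that normalizes the $\xi$-ball to the unit ball; because $\delta<\rho$, the $x$-support then becomes a ball of radius $2^{j(\rho-\delta)}$ on which the transported symbol has \emph{all} derivatives $O(1)$, so its Schwartz kernel satisfies $\abs{\widetilde K_\iota(x,y)}\le C_M(1+\abs{x-y})^{-M}$ and vanishes for $\abs{x}\gtrsim 2^{j(\rho-\delta)}$. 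Schur's test then gives $\|P_\iota\|_{L^2\to L^2}\le C$ uniformly in $\iota$. (When $\rho=1$ the subdivision in $x$ is unnecessary: each dyadic piece $p_j(x,D)$ is itself uniformly bounded by the analogous kernel estimate, obtained by integrating by parts in $\xi$ alone.)

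The crux, and the step I expect to be the main obstacle, is the off-diagonal decay required by Cotlar--Stein: one must show
$$\|P_\iota P_{\iota'}^{\ast}\|_{L^2\to L^2}+\|P_\iota^{\ast}P_{\iota'}\|_{L^2\to L^2}\le C_N\,(1+d(\iota,\iota'))^{-N}$$
for every $N$, where $d(\iota,\iota')$ is a suitable normalized separation of the two phase-space boxes, so that $\sum_{\iota'}(\text{right-hand side})^{1/2}$ converges uniformly in $\iota$. I would prove these estimates by writing the Schwartz kernel of each composition as an iterated oscillatory integral and integrating by parts repeatedly: in $\xi$ to convert separation of the $x$-boxes into decay, and in $x$ to convert separation of the $\xi$-boxes into decay. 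This uses that $P_{\iota'}^{\ast}$ again has symbol in $S^0_{\rho,\delta}$ with comparable seminorms and that compositions remain in the class --- the hypothesis $\delta<\rho$ being exactly what makes Lemma \ref{pseudoproduct} applicable with both factors in $S^0_{\rho,\delta}$. The delicate bookkeeping is that each $x$-derivative landing on a localizing cutoff or on the $x$-dependence of the symbol costs a factor $\sim 2^{j\delta}$, whereas the $\xi$-localization produces decay at the finer scale $2^{j\rho}$; since $\delta<\rho$, the net exponent in every off-diagonal bound comes out negative, the displayed estimates hold, and the Cotlar--Stein series converges, giving $\|P\|_{L^2\to L^2}\le C$. (As an aside, in the classical case $\rho=1$ half of these bounds are automatic: with the plain dyadic-in-frequency decomposition one has $P_jP_k^{\ast}=0$ whenever $\abs{j-k}\ge 2$, since then $\varphi_j\varphi_k\equiv 0$ and the kernel of $P_jP_k^{\ast}$ equals $(2\pi)^{-n}\!\int e^{i(x-y)\xi}p_j(x,\xi)\overline{p_k(y,\xi)}\,d\xi$.)
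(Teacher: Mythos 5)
The paper never actually proves this lemma: it is recorded as a standard fact with a pointer to Chapter 0 of \cite{TA}, where the argument is H\"ormander's square-root trick rather than almost orthogonality. Concretely, with $M>\sup\abs{p}$ one sets $q=(M^2-\abs{p}^2)^{1/2}\in S^0_{\rho,\delta}$ and uses the adjoint and composition calculus (available precisely because $\delta<\rho$, cf.\ Lemma \ref{pseudoproduct}) to write $P^{\ast}P=M^2-Q^{\ast}Q+R$ with $R\in OPS^{-(\rho-\delta)}_{\rho,\delta}$, whence $\|Pu\|_2^2\le M^2\|u\|_2^2+\abs{\langle Ru,u\rangle}$; boundedness of the negative-order remainder follows by iterating $\|Au\|_2^2\le\|A^{\ast}Au\|_2\|u\|_2$ until the order is so negative that the Schwartz kernel obeys a Schur-test bound. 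Your Cotlar--Stein phase-space decomposition is a genuinely different route: it is the standard proof of the full Calder\'on--Vaillancourt theorem and would in fact give the stronger statement for $\delta=\rho<1$, where the square-root trick fails because the symbolic calculus degenerates; your diagonal step (rescaling each box to unit scale and applying Schur's test) is correct, including the role of $\rho-\delta>0$ in making the boxes compatible with the uncertainty principle. The cost is that the argument is much heavier than needed here and, as written, is incomplete at exactly the point you yourself flag as the crux: the off-diagonal bounds $\|P_\iota P_{\iota'}^{\ast}\|+\|P_\iota^{\ast}P_{\iota'}\|\le C_N(1+d(\iota,\iota'))^{-N}$ are asserted with a plausible integration-by-parts strategy but not carried out, and essentially all of the work of the theorem lives in that bookkeeping (decay must be extracted separately in the shell index $j$, the frequency box $\mu$, and the spatial box $\nu$, and the Cotlar--Stein hypothesis $\sup_\iota\sum_{\iota'}\gamma(\iota,\iota')^{1/2}<\infty$ verified). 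For this paper, where only $\delta<\rho$ is ever used, the square-root argument (or the citation) is the economical choice; if you want your route to stand as a proof, those estimates must be produced in full.
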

The following lemma, which deals with the "exotic" symbol classes $S^m_{1,1}$, is proven in the appendix of \cite{AT}. It provides an asymptotic expansion for operators in the exotic class.
\begin{lemma} 
\label{exotic1}
Let $0\leq \delta <1$.
Suppose $P\in OPS^m_{1,1}$ and $Q\in OPS^\mu_{1,\delta}$. Then $PQ \in OPS^{m+\mu}_{1,1}$ and we have $PQ=T_N+R_N$ where
$$T_N(x,\xi)= \sum_{\abs{\alpha}\leq N} \frac{i^{\abs{\alpha}}}{\alpha!} D^\alpha_\xi p(x,\xi)D^\alpha_x q(x,\xi)$$
and $R_N \in OPS^{m+\mu-N(1-\delta)}_{1,1}$.
\end{lemma}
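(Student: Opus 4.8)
The statement is the composition (symbol-calculus) formula for pseudodifferential operators, in the mildly exotic configuration where the left factor $P$ lies in the exotic class $S^m_{1,1}$ while the right factor $Q\in OPS^\mu_{1,\delta}$ is tame because $\delta<1$; this is precisely the case in which the classical oscillatory-integral proof survives, and the plan is to run that proof while keeping careful track of orders. First I would pass from $PQ$ to its symbol: writing out $PQu$ for Schwartz $u$, interchanging the integrals after inserting a cutoff $\chi(\varepsilon\eta)$ in the inner frequency variable and letting $\varepsilon\to 0$, one obtains that $PQ$ is the pseudodifferential operator with symbol
$$r(x,\xi)\;=\;(2\pi)^{-n}\iint e^{-iy\cdot\eta}\,p(x,\xi+\eta)\,q(x+y,\xi)\,dy\,d\eta,$$
the integral understood in the oscillatory sense. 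Its convergence is made legitimate by the hypotheses: each $\eta$-derivative falling on $p(x,\xi+\eta)$ gains a factor $\langle\xi+\eta\rangle^{-1}$ (using $\rho_p=1$) while each $y$-derivative falling on $q(x+y,\xi)$ costs only $\langle\xi\rangle^{\delta}$ (using $\delta_q=\delta$), so repeated integration by parts in $y$ and in $\eta$ makes the integrand absolutely integrable.

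Next I would Taylor-expand $p(x,\xi+\eta)$ in $\eta$ about $\eta=0$ to order $N$ with integral remainder,
$$p(x,\xi+\eta)=\sum_{|\alpha|<N}\frac{\eta^\alpha}{\alpha!}\,\partial_\xi^\alpha p(x,\xi)\;+\;N\sum_{|\alpha|=N}\frac{\eta^\alpha}{\alpha!}\int_0^1(1-t)^{N-1}\partial_\xi^\alpha p(x,\xi+t\eta)\,dt,$$
substitute into $r$, and for each term of the finite sum use $\eta^\alpha e^{-iy\cdot\eta}=i^{|\alpha|}\partial_y^\alpha e^{-iy\cdot\eta}$ to integrate by parts in $y$, moving the derivatives onto $q(x+y,\xi)$; the surviving $(y,\eta)$-integrals then collapse by Fourier inversion to evaluation at $y=0$, producing exactly $\tfrac{i^{|\alpha|}}{\alpha!}D_\xi^\alpha p(x,\xi)\,D_x^\alpha q(x,\xi)$ (equivalently $\tfrac{(-i)^{|\alpha|}}{\alpha!}\partial_\xi^\alpha p\,\partial_x^\alpha q$), the constants being those coming from $e^{iD_y\cdot D_\eta}$. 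Summing over $|\alpha|\le N$ gives the symbol $T_N$ and the leftover is $R_N$. Since $D_\xi^\alpha p\in S^{m-|\alpha|}_{1,1}$ and $D_x^\alpha q\in S^{\mu+\delta|\alpha|}_{1,\delta}$, each term of $T_N$ lies in $S^{m+\mu-|\alpha|(1-\delta)}_{1,1}$, hence $T_N\in S^{m+\mu}_{1,1}$; so both assertions of the lemma ($PQ\in OPS^{m+\mu}_{1,1}$ and $PQ=T_N+R_N$) reduce to showing $R_N\in S^{m+\mu-N(1-\delta)}_{1,1}$, the first being the case $N=0$.

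The remainder estimate is the main obstacle. One must show that the symbol of $R_N$, which up to fixed constants is
$$r_N(x,\xi)=\sum_{|\alpha|=N}c_\alpha\int_0^1(1-t)^{N-1}\!\iint e^{-iy\cdot\eta}\,\partial_\xi^\alpha p(x,\xi+t\eta)\,(D_x^\alpha q)(x+y,\xi)\,dy\,d\eta\,dt,$$
lies in $S^{m+\mu-N(1-\delta)}_{1,1}$. The clean route is to iterate the expansion if necessary so that $m-N$ is as negative as we wish — this only postpones the difficulty but makes the amplitude bounded in the inner frequency $\eta$ — and then to invoke the standard reduction of an oscillatory-integral operator with amplitude to a pseudodifferential operator, applied to $a_t(x,y,\eta)=\partial_\xi^\alpha p(x,\xi+t\eta)\,(D_x^\alpha q)(x+y,\xi)$ uniformly in $t\in[0,1]$. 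The order bookkeeping is the heart of the matter: $\partial_\xi^\alpha p$ contributes $\langle\xi\rangle^{m-N}$, gaining the full $N$ because $\rho_p=1$, whereas $D_x^\alpha q$ contributes $\langle\xi\rangle^{\mu+\delta N}$, losing only $\delta N$ because $\delta_q=\delta$, so the amplitude is of order $m+\mu-N(1-\delta)$; to secure convergence of the $\eta$-integral uniformly in $t$ one splits it at $|t\eta|\sim\langle\xi\rangle$ — a dyadic decomposition in $\eta$ — and integrates by parts repeatedly in $y$ to localize near $y=0$, each such step costing $\langle\xi\rangle^{\delta}$ on $q$ but gaining $\langle\eta\rangle^{-1}$, which wins precisely because $\delta<1$. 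Finally, differentiating $r_N$: derivatives in $\xi$ improve the order (both factors have $\rho=1$), while derivatives in $x$ may hit $p(x,\xi+t\eta)$ and cost a full power of $\langle\xi\rangle$, which is exactly why $R_N$ lands in the $(1,1)$ class and no better. The one point where $\delta<1$ is genuinely indispensable — and where this argument would collapse for $Q\in OPS^\mu_{1,1}$ — is that last integration by parts in $y$.
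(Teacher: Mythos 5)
The paper offers no proof of this lemma; it is quoted and credited to the appendix of Auscher and Taylor \cite{AT}, so there is no in-text argument to compare against. Your sketch is the standard oscillatory-integral symbol calculus for composition of pseudodifferential operators, and the quantitative points you extract are correct: the exact formula $r(x,\xi)=(2\pi)^{-n}\iint e^{-iy\cdot\eta}p(x,\xi+\eta)q(x+y,\xi)\,dy\,d\eta$ for the composed symbol, the Taylor expansion with integral remainder, the identity $\eta^\alpha e^{-iy\cdot\eta}=i^{|\alpha|}\partial_y^\alpha e^{-iy\cdot\eta}$ converting $\eta^\alpha$ into $D_x^\alpha q$ upon integration by parts, the order count $m-|\alpha|+\mu+\delta|\alpha|=m+\mu-|\alpha|(1-\delta)$ for each term of $T_N$, and, decisively, the identification that the $y$-integrations by parts used to tame the $\eta$-integral in the remainder cost $\langle\xi\rangle^\delta$ on $q$ while gaining $\langle\eta\rangle^{-1}$, a trade that is favorable only because $\delta<1$; this is exactly the obstruction to composing two general $S^m_{1,1}$ operators. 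Where the sketch is thinnest is, as you acknowledge, the uniform-in-$t$ symbol estimate establishing $R_N\in S^{m+\mu-N(1-\delta)}_{1,1}$: in particular, $x$-derivatives falling on $p(x,\xi+t\eta)$ cost a factor $\langle\xi+t\eta\rangle$ rather than $\langle\xi\rangle$, and one must verify that the dyadic localization in $\eta$ and the decay bought by the $y$-integrations by parts confine this within the $(1,1)$ class; this bookkeeping is precisely what the Auscher-Taylor appendix supplies. Your proposal is therefore a correct roadmap for the same argument the cited reference carries out in detail, and citing \cite{AT} rather than reproducing it, as the paper does, is a legitimate choice.
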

Our next lemma gives us an inclusion of the operators with symbols in $S^{-\infty}$ into the Sobolev-smoothing operators.
\begin{lemma}\label{SMOOTHIE}
If $P$ has its symbol in $S^{-\infty}$, then it is a Sobolev-smoothing operator.
\end{lemma}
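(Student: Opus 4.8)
The plan is to reduce the claim to the $L^2$-boundedness of a single, explicitly constructed pseudodifferential operator and then to read that boundedness off from the composition and continuity lemmas already established. Throughout, recall that, having symbol in $S^{-\infty}$, the operator $P$ lies in $OPS^m_{1,0}$ for every real $m$; in particular its symbol has $\delta = 0 < 1$, so $P$ is well-defined on $\mathcal{S}'$, and there is no difficulty in applying $P$ to elements of $H^s$ for negative $s$.

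First I would fix arbitrary real numbers $s$ and $t$ and recall the identifications $H^s(\mathbb{R}^n) = \Lambda^{-s}L^2(\mathbb{R}^n)$ and $\|g\|_{H^t} = \|\Lambda^t g\|_{L^2}$. Writing $f \in H^s$ as $f = \Lambda^{-s}g$ with $g = \Lambda^s f \in L^2$ and $\|g\|_{L^2} = \|f\|_{H^s}$, one has $\|Pf\|_{H^t} = \|\Lambda^t P \Lambda^{-s} g\|_{L^2}$. Hence it suffices to prove that $A := \Lambda^t P \Lambda^{-s}$ is bounded on $L^2$.

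Next I would identify the symbol class of $A$. Since $\Lambda^{-s} \in OPS^{-s}_{1,0}$ and $\Lambda^t \in OPS^t_{1,0}$, while $P \in OPS^m_{1,0}$ for every $m$, two applications of Lemma \ref{pseudoproduct} — whose hypothesis $0 \le \delta_2 < \rho_1 \le 1$ holds here with all $\rho_j = 1$ and all $\delta_j = 0$ — give $\Lambda^t P \in OPS^{t+m}_{1,0}$ and then $A = (\Lambda^t P)\Lambda^{-s} \in OPS^{t+m-s}_{1,0}$ for every $m$. Taking $m = s - t$ shows $A \in OPS^0_{1,0}$. By Lemma \ref{L2CONT}, applicable because $\delta = 0 < \rho = 1$, the operator $A : L^2 \to L^2$ is continuous. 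Combined with the first step, $P : H^s \to H^t$ is continuous, and since $s$ and $t$ were arbitrary, $P$ is Sobolev-smoothing.

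There is no substantial obstacle: the only points demanding a moment's care are checking that the composition Lemma \ref{pseudoproduct} applies at each step (that is, verifying the inequality between the $\rho$'s and $\delta$'s) and correctly expressing the $H^s$ and $H^t$ norms through powers of $\Lambda$, so that $L^2$-boundedness of $A$ is exactly equivalent to the stated mapping property. Both are routine once the three cited lemmas are available.
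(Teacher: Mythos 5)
Your proof is correct and follows essentially the same route as the paper: write elements of the Sobolev space through powers of $\Lambda$, use Lemma \ref{pseudoproduct} to place the conjugated operator $\Lambda^t P \Lambda^{-s}$ in a symbol class of your choosing, and then invoke a continuity lemma. The only cosmetic difference is in the last step — you choose $m=s-t$ so that the conjugated operator lands in $OPS^0_{1,0}$ and cite the $L^2$-boundedness Lemma \ref{L2CONT}, while the paper chooses $m$ small enough that the order is below $-1$ and cites the $L^p$-boundedness Lemma \ref{pseudocont}; both close the argument equally well.
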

\begin{proof}
Let $s,t$ be arbitrary real numbers, and suppose that $f\in H^t=\Lambda^{-t}L^2$, i.e. we may write $f= \Lambda^{-t}u$ for some function $u\in L^2$. We have to show that $Pf \in H^s=\Lambda^{-s}L^2$, but this amounts to showing that $\Lambda^{s}P\Lambda^{-t} u \in L^2$.

Since $P\in S^{-\infty}$, we have, for some $m$ small enough, that $P\in S^m_{1,0}$ with $m+s-t<-1$. Then, by Lemma \ref{pseudoproduct}, $\Lambda^{s}P\Lambda^{-t} \in S^{m+s-t}_{1,0}$, which is continuous from $L^2 \to L^2$ by Lemma \ref{pseudocont}. This proves that $P$ is Sobolev-smoothing.
\end{proof}
The three lemmas that follow are also about pseudodifferential operators. The first two say that certain products of operators are guaranteed to be Sobolev-smoothing.
\begin{lemma}
\label{smoothing}
Let $P$ be a pseudodifferential operator with symbol $p(\xi)$ in the class $S^m_{1,0}$. Let $\phi_1, \phi_2$ be two smooth functions with disjoint supports (and assume they are pseudodifferential operators in the class $S^0_{1,\delta}$ with $1>\delta\geq0$). Then, the composition $\phi_1P\phi_2$ is a smoothing pseudodifferential operator.
\end{lemma}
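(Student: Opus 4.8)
The plan is to show that the operator $\phi_1 P\phi_2$ has symbol in the class $S^{-\infty}$; once this is done, Lemma \ref{SMOOTHIE} (already established) immediately gives that $\phi_1 P\phi_2$ is Sobolev-smoothing, and it is manifestly still a pseudodifferential operator, being a composition of such. The underlying mechanism is the standard one: in the asymptotic expansion of the composition $P\circ M_{\phi_2}$, where $M_{\phi_2}$ denotes multiplication by $\phi_2$, every term carries an $x$-derivative of $\phi_2$ and is therefore supported inside $\operatorname{supp}\phi_2$; multiplying on the left by $\phi_1$, whose support is disjoint from that of $\phi_2$, annihilates all of these terms, while the remainder of the expansion can be arranged to have arbitrarily negative order.

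First I would record that, as noted just before the statement, the hypothesis $\phi_i\in S^0_{1,\delta}$ forces $\phi_i$ and all of its derivatives to be bounded, so multiplication by $\phi_i$ is an operator $M_{\phi_i}\in OPS^0_{1,0}$. Then I would write $\phi_1 P\phi_2=M_{\phi_1}\circ(P\circ M_{\phi_2})$ and apply Lemma \ref{exotic1} to $P\circ M_{\phi_2}$ — legitimate since $P\in OPS^m_{1,0}\subseteq OPS^m_{1,1}$ and $M_{\phi_2}\in OPS^0_{1,0}$ with $\delta=0<1$ — obtaining, for every $N$,
$$P\circ M_{\phi_2}=T_N+R_N,\qquad T_N(x,\xi)=\sum_{\abs{\alpha}\le N}\frac{i^{\abs{\alpha}}}{\alpha!}\,D^\alpha_\xi p(\xi)\,D^\alpha\phi_2(x),\qquad R_N\in OPS^{m-N}_{1,1},$$
where $T_N$ has this special form because $p$ has no $x$-dependence and $\phi_2$ no $\xi$-dependence. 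Next I would multiply on the left by $\phi_1$: since $\operatorname{supp}(D^\alpha\phi_2)\subseteq\operatorname{supp}\phi_2$ and $\operatorname{supp}\phi_1\cap\operatorname{supp}\phi_2=\varnothing$, one has $\phi_1 T_N\equiv 0$ identically; and $M_{\phi_1}\circ R_N$ has symbol $\phi_1(x)\,r_N(x,\xi)$, which is still in $S^{m-N}_{1,1}$ because multiplying a symbol by a $C^\infty$ function with bounded derivatives preserves the symbol class (Leibniz). Hence $\phi_1 P\phi_2\in OPS^{m-N}_{1,1}$ for every $N$, so its symbol lies in $\bigcap_N S^{m-N}_{1,1}=S^{-\infty}_{1,1}$, which by the earlier lemma identifying all symbol classes of infinitely negative order coincides with $S^{-\infty}$.

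Invoking Lemma \ref{SMOOTHIE} then shows that $\phi_1 P\phi_2$ maps every $H^s$ into every $H^t$, i.e.\ it is a smoothing pseudodifferential operator, which is the assertion. The proof is largely bookkeeping once the exotic composition calculus of Lemma \ref{exotic1} is available; the only point that genuinely requires care is verifying that the \emph{entire} expansion, and not merely its principal part $\phi_1(x)\phi_2(x)p(\xi)=0$, is annihilated by $\phi_1$ — which is exactly where $\operatorname{supp}(D^\alpha\phi_2)\subseteq\operatorname{supp}\phi_2$ is used — together with the quantitative fact that $R_N$ drops in order by exactly $N$. (If one prefers to avoid the exotic classes, an equivalent kernel-side argument is available: $P$ is convolution by $K=\mathcal{F}^{-1}(p)$, which restricted away from the origin is a genuine Schwartz function by the usual symbol-to-kernel estimates, so the Schwartz kernel $\phi_1(x)K(x-y)\phi_2(y)$ of $\phi_1 P\phi_2$ equals $\phi_1(x)G(x-y)\phi_2(y)$ with $G\in\mathcal{S}(\mathbb{R}^n)$ — using that in all our applications $\operatorname{supp}\phi_1$ and $\operatorname{supp}\phi_2$ lie a positive distance apart — whence $\phi_1 P\phi_2=M_{\phi_1}\circ(\text{convolution by }G)\circ M_{\phi_2}$ is Sobolev-smoothing, since convolution by $G$ has symbol $\widehat{G}\in\mathcal{S}\subseteq S^{-\infty}$ and multiplication by a $C^\infty$ function with bounded derivatives is an $OPS^0_{1,0}$ operator, hence bounded on every $H^s$ by Lemmas \ref{pseudoproduct} and \ref{L2CONT}.)
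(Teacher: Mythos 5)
Your proof is correct and follows essentially the same route as the paper: expand the symbol of $P\phi_2$ asymptotically, observe that $\operatorname{supp}(D^\alpha\phi_2)\subseteq\operatorname{supp}\phi_2$ is disjoint from $\operatorname{supp}\phi_1$ so that every term of the expansion is annihilated by $\phi_1$, conclude that the resulting symbol lies in $S^{-\infty}$, and invoke Lemma~\ref{SMOOTHIE}. The only cosmetic deviations are that you route the expansion through Lemma~\ref{exotic1} (after noting $S^m_{1,0}\subseteq S^m_{1,1}$) where the paper cites the standard $S^m_{1,0}\circ S^0_{1,\delta}$ calculus from \cite{TA}, and that you exploit the exact identity that $M_{\phi_1}\circ R_N$ has symbol $\phi_1(x)\,r_N(x,\xi)$ rather than running a second asymptotic expansion for $\phi_1\circ P_2$; both simplifications are sound.
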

\begin{proof}
We first compute the symbol $p_2$ of $P_2=P\phi_2$. Since $1>\delta$, a standard result (see \cite{TA}) gets us the asymptotic expansion:
$$p_2(x,\xi) \sim \sum_{\alpha\geq 0} \frac{i^{\abs{\alpha}}}{\alpha !} D^\alpha_\xi p(\xi) D^{\alpha}_x \phi_2(x)$$
where the sum and asymptotic expression is interpreted in the sense that the sum over $\abs{\alpha}<N$ differs from $p_2(x,\xi)$ by an element of $S^{m-N(1-\delta)}_{1,\delta}$. Then, a similar asymptotic expansion can be found for the symbol of $Q= \phi_1 P_2=\phi_1 P \phi_2$, which is given by:
$$q(x,\xi) \sim \sum_{\alpha\geq 0} \frac{i^{\abs{\alpha}}}{\alpha !} D^\alpha_\xi \phi_1(x) D^{\alpha}_x p_2(x,\xi).$$
It follows that up to a smoothing operator we have:
$$q(x,\xi) \sim \phi_1(x) p_2(x,\xi).$$
Now, for any arbitrary $N$, we have that
$$q(x,\xi) \sim \phi_1(x) \sum_{N>\alpha\geq 0} \frac{i^{\abs{\alpha}}}{\alpha !} D^\alpha_\xi p(\xi) D^{\alpha}_x \phi_2(x)$$ up to an operator in $S^{m-N(1-\delta)}_{1,\delta}$. Since $\phi_1, \phi_2$ have disjoint supports, the above sum is zero for all $N$. We conclude that $q(x,\xi)\in S^{-\infty}_{1,\delta}$, so $Q=\phi_1 P\phi_2$ is an operator in $OPS^{-\infty}$, which, by Lemma \ref{SMOOTHIE}, means that $Q$ is Sobolev-smoothing.
\end{proof}
We separate the next lemma from the previous one since it deals with the exotic class. The proof, however, is the same as for Lemma \ref{smoothing}.
\begin{lemma}
\label{exotic2}
Let $\phi_1,\phi_2$ be two smooth functions with disjoint supports in the class $S^{0}_{1,\delta}$, with $1>\delta\geq 0$. Let $P$ be a pseudodifferential operator in the class $OPS^m_{1,1}$. Then the composition $\phi_1 P\phi_2$ is a Sobolev-smoothing pseudodifferential operator.
\end{lemma}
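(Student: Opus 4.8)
The plan is to imitate the proof of Lemma \ref{smoothing}, replacing the ordinary asymptotic expansion of a composition by the exotic-class symbol calculus of Lemma \ref{exotic1}. First I would regard $\phi_2$ as the multiplication operator it is, so that $\phi_2\in OPS^0_{1,\delta}$ with $\delta<1$, and apply Lemma \ref{exotic1} to the composition $P\phi_2$. Writing $p$ for the symbol of $P$, this produces, for every integer $N\geq 0$, a decomposition $P\phi_2=T_N+R_N$ in which
$$T_N(x,\xi)=\sum_{\abs{\alpha}\leq N}\frac{i^{\abs{\alpha}}}{\alpha!}\,D^\alpha_\xi p(x,\xi)\,D^\alpha_x\phi_2(x),\qquad R_N\in OPS^{m-N(1-\delta)}_{1,1}.$$

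Next I would left-multiply by $\phi_1$. Composition on the left with a function of $x$ alone is exact at the level of symbols and contributes no remainder term: if $A$ has symbol $a(x,\xi)$, then $\phi_1 A$ has symbol $\phi_1(x)a(x,\xi)$. Hence the symbol of $\phi_1 P\phi_2$ equals $\phi_1(x)\big(T_N(x,\xi)+r_N(x,\xi)\big)$, where $r_N\in S^{m-N(1-\delta)}_{1,1}$ is the symbol of $R_N$. The point of the argument is that $\phi_1 T_N\equiv 0$: every summand of $T_N$ carries a factor $D^\alpha_x\phi_2$, whose support is contained in $\operatorname{supp}\phi_2$, which is disjoint from $\operatorname{supp}\phi_1$, so $\phi_1(x)\,D^\alpha_x\phi_2(x)$ vanishes identically for each multi-index $\alpha$. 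Thus $\phi_1 P\phi_2=\phi_1 R_N$ as operators, and since multiplication by $\phi_1\in S^0_{1,\delta}$ preserves symbol classes, $\phi_1 P\phi_2\in OPS^{m-N(1-\delta)}_{1,1}$.

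Since $N$ was arbitrary and $1-\delta>0$, the order $m-N(1-\delta)$ tends to $-\infty$, so the symbol of $\phi_1 P\phi_2$ lies in $\bigcap_{N\geq 0}S^{m-N(1-\delta)}_{1,1}=S^{-\infty}_{1,1}\subseteq S^{-\infty}$, the last inclusion being the lemma stating that a symbol of infinitely negative order in any class $S^{-\infty}_{\rho,\delta}$ already lies in $S^{-\infty}_{1,0}$. Therefore $\phi_1 P\phi_2\in OPS^{-\infty}$, and Lemma \ref{SMOOTHIE} gives that it is Sobolev-smoothing, which is the desired conclusion.

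I do not expect a genuine obstacle: the argument is a transcription of the proof of Lemma \ref{smoothing}. The two points deserving care are that Lemma \ref{exotic1} requires the factor $\phi_2$ to lie in a class $S^m_{1,\delta}$ with $\delta<1$ (which is what makes the remainders $R_N$ genuinely improve in order), and that $\phi_1$ must be absorbed as an exact multiplication of symbols rather than through a second application of Lemma \ref{exotic1} — applied to $\phi_1\cdot(P\phi_2)$ directly, that lemma would have its $OPS_{1,1}$ factor on the wrong side.
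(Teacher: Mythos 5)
Your proposal is correct and follows essentially the same route as the paper: apply Lemma \ref{exotic1} to $P\phi_2$, multiply the resulting symbol by $\phi_1(x)$ on the left, observe that every term of the expansion carries a factor $D^\alpha_x\phi_2$ and hence is killed by the disjointness of supports, and conclude that the symbol lies in $S^{-\infty}$, so the operator is Sobolev-smoothing by Lemma \ref{SMOOTHIE}. Your explicit remarks on why the left factor $\phi_1$ enters by exact symbol multiplication (rather than a second application of Lemma \ref{exotic1}) are a welcome precision that the paper leaves implicit.
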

\begin{proof}
We first deal with the composition $P_2=P\phi_2$, which by Lemma \ref{exotic1} lies in the class $OPS^m_{1,1}$. However, the same Lemma \ref{exotic1} also gives us the asymptotic expansion:
$$p_2(x,\xi)\sim \sum_{\abs{\alpha}\leq N} \frac{i^{\abs{\alpha}}}{\alpha!} D^\alpha_\xi p(x,\xi)D^\alpha_x \phi_2(x)$$
up to a term in $OPS^{m-N(1-\delta)}_{1,1}$. But then the symbol of $Q=\phi_1 P\phi_2$ is given by:
$$q(x,\xi)\sim \phi_1(x)\sum_{\abs{\alpha}\geq0} \frac{i^{\abs{\alpha}}}{\alpha!} D^\alpha_\xi p(x,\xi)D^\alpha_x \phi_2(x)$$
up to an operator in $S^{-\infty}$. Since $\phi_1, \phi_2$ have disjoint support, the above sum is zero, so it follows that $q\in S^{-\infty}$, as desired.
\end{proof}

The next lemma gives us a quantitative version of the smoothing property:
\begin{lemma}
\label{quantbound} Let $j\in\mathbb{Z}$ and let $P_j$ be a Littlewood-Paley projection operator. Let $\beta$ be an arbitrary real number, and assume that $\| f\|_{H^\beta}\leq 1$.
Let $Q$ be a pseudodifferential operator with symbol $q(x,\xi)$ in the class $S^m_{1,1}$. Let $\phi_1, \phi_2$ be two smooth functions with disjoint supports in the class $S^0_{1,\delta}$ where $1>\delta\geq0$. Then for any positive integer $N$, the composition $\phi_1Q\phi_2$ satisfies the following quantitative bound
$$\| P_j\phi_1 Q \phi_2 f\|_2 \leq K 2^{-jN}$$
with constant $K$ depending only on $N$ and $\beta$.
\end{lemma}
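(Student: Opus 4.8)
The plan is to upgrade the qualitative conclusion of Lemma~\ref{exotic2} to a quantitative Sobolev-smoothing estimate and then play it against the frequency localization of $P_j$. Write $R := \phi_1 Q \phi_2$. Since $Q \in OPS^m_{1,1}$ and $\phi_1,\phi_2$ are smooth functions with disjoint supports in $S^0_{1,\delta}$ for some $1 > \delta \geq 0$, Lemma~\ref{exotic2} shows that $R$ has symbol in $S^{-\infty} = S^{-\infty}_{1,0}$; equivalently, $R \in OPS^{m_0}_{1,0}$ for every real $m_0$.

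First I would prove a quantitative version of ``$R$ smooths infinitely''. Set $L := N - \beta$, so $\beta + L = N \geq 1$. Composing $\Lambda^{\beta+L} \in OPS^{\beta+L}_{1,0}$ with $R \in OPS^{\beta-N}_{1,0}$ and then with $\Lambda^{-\beta} \in OPS^{-\beta}_{1,0}$, Lemma~\ref{pseudoproduct} — whose hypothesis $0 \leq \delta_2 < \rho_1 \leq 1$ holds since all the $\rho$'s equal $1$ — gives $\Lambda^{\beta+L} R \Lambda^{-\beta} \in OPS^{0}_{1,0}$, which is bounded on $L^2$ by Lemma~\ref{L2CONT}. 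Denoting its operator norm by $C_L$ and using $\|g\|_{H^s} = \|\Lambda^s g\|_2$, we obtain
$$\|R f\|_{H^{\beta+L}} = \big\|\Lambda^{\beta+L} R \Lambda^{-\beta}(\Lambda^{\beta} f)\big\|_2 \leq C_L \|\Lambda^{\beta} f\|_2 = C_L \|f\|_{H^\beta} \leq C_L.$$

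Next I would use that $p_j$ is bounded by $1$ and supported in $\tfrac23 2^j < |\xi| < 3\cdot 2^j$. For $j \geq 1$ and $|\xi| > \tfrac23 2^j$ we have $(1+|\xi|^2)^{-(\beta+L)} \leq |\xi|^{-2N} \leq (\tfrac32)^{2N} 2^{-2jN}$, so by Plancherel
$$\|P_j R f\|_2^2 = \int |p_j(\xi)|^2\, |\widehat{Rf}(\xi)|^2\, d\xi \leq (\tfrac32)^{2N} 2^{-2jN}\, \|R f\|_{H^{\beta+L}}^2 \leq \big((\tfrac32)^{N} C_L\big)^2\, 2^{-2jN}.$$
For $j \leq 0$ the asserted bound is automatic, since then $2^{-jN} \geq 1$ while $\|P_j R f\|_2 \leq \|R f\|_2 \leq C_{-\beta}$ by the previous step with $L$ replaced by $-\beta$. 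Taking $K := \max\big((\tfrac32)^{N} C_{N-\beta},\, C_{-\beta}\big)$ would finish the proof.

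This is bookkeeping within the pseudodifferential calculus; the point meriting care is the dependence of $K$. The constant $C_L$ is governed by finitely many $S^0_{1,0}$-seminorms of $\Lambda^{\beta+L} R \Lambda^{-\beta}$, hence — through Lemmas~\ref{exotic1}, \ref{exotic2}, and \ref{pseudoproduct} — by $N$, $\beta$, and finitely many symbol seminorms of the fixed operators $Q,\phi_1,\phi_2$, so indeed $K = K(N,\beta)$. In the applications $\phi_1,\phi_2$ are bump functions of type $j$, whose derivatives satisfy $|D^\alpha\phi| \leq C_\alpha 2^{|\alpha| j(1-\epsilon)}$; then the relevant seminorms of $R$ grow only polynomially in $2^j$, which is harmless because $N$ may be chosen arbitrarily large and the polynomial factor is absorbed into the gain $2^{-jN}$. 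I expect the only genuine obstacle to be applying the exotic-class composition calculus of Lemma~\ref{exotic2} correctly, so that $R$ really does land in $S^{-\infty}$ despite $Q$ living in the borderline class $S^m_{1,1}$.
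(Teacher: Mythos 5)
Your proof is correct and follows essentially the same route as the paper's: Lemma~\ref{exotic2} gives that $R=\phi_1 Q\phi_2$ has $S^{-\infty}$ symbol, the pseudodifferential calculus (Lemma~\ref{pseudoproduct} plus $L^2$ boundedness) upgrades this to a quantitative Sobolev bound $\|Rf\|_{H^{\beta+L}}\leq C_L\|f\|_{H^\beta}$, and the frequency localization of $P_j$ converts the Sobolev gain into the factor $2^{-jN}$. The only cosmetic difference is that you invoke Plancherel directly on the support of $p_j$ where the paper cites Lemma~\ref{finiteband1}, and you are slightly more scrupulous in tracking the operator norm and the trivial regime $j\leq 0$; your closing remark about polynomial growth of the seminorms when $\phi_1,\phi_2$ are type-$j$ bump functions is a fair and relevant caveat about how the lemma is later applied, correctly noting that it is absorbed by choosing $N$ large.
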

\begin{proof}
By Lemma \ref{exotic2}, we know that $\tilde{Q}:= \phi_1 Q \phi_2$ is a smoothing operator. In particular, we have for any real number $s$:
$$\| \tilde{Q} f\|_s \leq K \|f\|_\beta \leq K,$$
where the constant $K$ depends only on $s$ and $\beta$. By Lemma \ref{finiteband1}, we have
$$2^{sj} \|P_j \tilde{Q}f\|_2 \leq K \|\tilde{Q}f\|_s.$$
Together we get, for some other constant $K$ depending only on $s$ and $\beta$:
$$\|P_j \phi_1 Q\phi_2 f\|_2 \leq K 2^{-s j}.$$
Since $s$ was arbitrary, we achieve what we desired.
\end{proof}
\subsection{Littlewood-Paley Decomposition and Localization}
We use a Littlewood-Paley partition of frequency space. Namely, for $j\in \mathbb{Z}$ we have pseudodifferential operators $P_j$ with smooth symbols $p_j(\xi)$ that are supported in $\frac{2}{3}2^j < \abs{\xi}<3\cdot 2^j$, which also satisfy $p_j(\xi) = p_0(2^{-j} \xi)$ and 
\begin{equation}\sum_{j\in \mathbb{Z}} p_j(\xi) =1.\end{equation}
We define $\tilde{P}_j:= \sum_{k=-2}^{2} P_{j+k}$ to be the sum of all Littlewood-Paley projections whose symbols' supports intersect the support of $p_j(\xi)$. Likewise we may define $\tilde{p}_j(\xi)$. We also define $\tilde{\tilde{P_j}}:= \sum_{k=-4}^4 P_{j+k}$ to be the sum of all Littlewood-Paley projections whose symbols' supports intersect the support of $\tilde{p}_j$. Likewise we may define $\tilde{\tilde{p}}_j$.

We also note that
\begin{equation}
\label{proj1}
\tilde{P}_j P_j = P_j
\end{equation}
These Littlewood-Paley projections satisfy the following inequalities
\begin{lemma}
\label{PIneq}
For any $2\leq q \leq \infty$ there exists a constant $K$ independent of $j$ such that
$$\| P_j f\|_q \leq K 2^{nj\big(\frac{1}{2}-\frac{1}{q}\big)} \|P_j f\|_2$$
\end{lemma}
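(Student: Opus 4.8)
The statement is the classical Bernstein inequality for frequency-localized functions, and the plan is to reduce it to a scaling computation built on Young's convolution inequality. First I would use the reproducing identity $\tilde{P}_j P_j = P_j$ from Equation (\ref{proj1}) to write $P_j f = \tilde{P}_j(P_j f)$. Since $\tilde{P}_j$ is a Fourier multiplier whose symbol satisfies $\tilde{p}_j(\xi) = \tilde{p}_0(2^{-j}\xi)$ with $\tilde{p}_0$ smooth and compactly supported (in particular away from the origin), the operator $\tilde{P}_j$ acts by convolution with a kernel of the form $K_j(x) = c_n 2^{nj}\Phi(2^j x)$, where $\Phi := \mathcal{F}^{-1}(\tilde{p}_0)$ is a fixed Schwartz function and $c_n$ is a dimensional constant. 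Hence $P_j f = K_j * (P_j f)$.

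Next I would apply Young's convolution inequality with the exponent $r$ determined by $\tfrac{1}{r} = \tfrac{1}{2} + \tfrac{1}{q}$, which lies in $[1,2]$ precisely because $q \in [2,\infty]$. This gives $\|P_j f\|_q = \|K_j * P_j f\|_q \le \|K_j\|_r\,\|P_j f\|_2$. A change of variables then yields $\|K_j\|_r = c_n 2^{nj}\,\|\Phi(2^j\cdot)\|_r = c_n 2^{nj(1 - 1/r)}\|\Phi\|_r = c_n 2^{nj(1/2 - 1/q)}\|\Phi\|_r$, and $\|\Phi\|_r < \infty$ for every $r \in [1,\infty]$ since $\Phi$ is Schwartz. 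Setting $K := c_n\|\Phi\|_r$, which depends only on $q$ and $n$ and not on $j$, produces the claimed inequality; the endpoint $q = \infty$ corresponds to $r = 2$ and is handled by the identical argument.

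I do not anticipate a genuine obstacle here: the only points requiring care are verifying that the Young exponent $r = (\tfrac{1}{2} + \tfrac{1}{q})^{-1}$ stays in the admissible range $[1,2]$ for all $q \in [2,\infty]$, and carrying out the dilation bookkeeping for $\|K_j\|_r$ correctly so that the exponent of $2^j$ comes out as $n(\tfrac12 - \tfrac1q)$. One could alternatively skip $\tilde{P}_j$ and convolve directly with the kernel of $P_j$ itself, but routing through $\tilde{P}_j$ keeps the relevant kernel a fixed dilate of a single Schwartz function, which makes the scaling transparent.
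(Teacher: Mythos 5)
Your proposal is correct and follows essentially the same route as the paper: both write $P_j f = \tilde{P}_j P_j f$, apply Young's convolution inequality with exponent $r$ satisfying $1/r = 1/2 + 1/q$ (i.e.\ $r = 2q/(2+q)$), and finish by the dilation scaling of the kernel of $\tilde{P}_j$. No gaps.
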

\begin{proof}
Let $\phi_j = \mathcal{F}^{-1}(\tilde{p}_j)$. Recalling the scaling law satisfied by the symbols $p_j$, we see that
\begin{equation}\label{scale1}\phi_j(x)=2^{nj}\phi_0(2^j x).\end{equation}
By definition, $\tilde{P}_jf = \phi_j \ast f$. By Young's convolution inequality (for which we need $q\geq 2$) and the projection property Equation (\ref{proj1}) we have
$$\| P_j f\|_q = \|\tilde{P}_j P_jf\|_q \leq \| P_j f\|_2 \|\phi_j \|_{2q/(2+q)}$$
By the scaling property in Equation (\ref{scale1}) and a change of variables we get:
$$\|\phi_j\|_{2q/(2+q)} =2^{nj} \|\phi_0(2^jx)\|_{2q/(2+q)}= 2^{nj(1/2-1/q)}\|\phi_0\|_{2q/(2+q)}.$$
Combining the last two inequalities yields our result.
\end{proof}
The next two lemmas may together be characterized as representative of the "finite band property" relating derivatives with Littlewood-Paley projections using the localization in frequency space. For more details on the finite band property, see \cite{KR} and Chapter 2 in \cite{BCD}. The following result is just a restatement of a lemma from \cite{BCD} in our notation.
\begin{lemma}[Lemma 2.1 in \cite{BCD}]
\label{finiteband1}
For any $s\geq 0$ there exists a constant $K$ such that for any $j\in \mathbb{Z}$ and for all $1\leq p\leq \infty$ we have
$$\|P_j f\|_{W^{s,p}(\mathbb{R}^n)} \leq K 2^{js} \|P_j f\|_{L^p(\mathbb{R}^n)}$$
$$2^{js} \|P_j f\|_{L^p(\mathbb{R}^n)} \leq K \|P_j f\|_{W^{s,p}(\mathbb{R}^n)}.$$
\end{lemma}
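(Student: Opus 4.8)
The plan is to reduce both estimates to uniform bounds on two convolution kernels, in the spirit of the proof of Lemma~\ref{PIneq}. Write $\Lambda^s$ for the Fourier multiplier with symbol $(1+|\xi|^2)^{s/2}$, so that the $W^{s,p}$ norm of a function $g$ is $\|\Lambda^s g\|_{L^p}$. Since $\tilde P_j P_j = P_j$ by Equation~(\ref{proj1}), we may insert $\tilde P_j$ for free and write $\Lambda^s P_j f = (\Lambda^s \tilde P_j)(P_j f) = m_j * P_j f$, where $m_j := \mathcal{F}^{-1}\big((1+|\xi|^2)^{s/2}\,\tilde p_j(\xi)\big)$. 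Young's convolution inequality gives $\|P_j f\|_{W^{s,p}} \le \|m_j\|_{L^1}\,\|P_j f\|_{L^p}$ for all $1\le p\le\infty$, so the first inequality follows once we prove $\|m_j\|_{L^1}\le K\,2^{js}$ with $K$ independent of $j$. For the reverse inequality we run the identity backwards: because $\tilde p_j$ is supported in an annulus bounded away from the origin, the symbol $\tilde p_j(\xi)(1+|\xi|^2)^{-s/2}$ is a legitimate smooth compactly supported function, and with $n_j := \mathcal{F}^{-1}\big(\tilde p_j(\xi)(1+|\xi|^2)^{-s/2}\big)$ we get $P_j f = n_j * (\Lambda^s P_j f)$, hence $\|P_j f\|_{L^p} \le \|n_j\|_{L^1}\,\|P_j f\|_{W^{s,p}}$; this reduces the second inequality to $\|n_j\|_{L^1}\le K\,2^{-js}$.

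Both kernel bounds come from rescaling. Substituting $\xi = 2^j\eta$ and using $\tilde p_j(\xi)=\tilde p_0(2^{-j}\xi)$ one factors
$$(1+|\xi|^2)^{s/2}\,\tilde p_j(\xi) \;=\; \langle 2^j\rangle^{s}\,g_j(2^{-j}\xi), \qquad g_j(\eta):=\Big(\frac{1+2^{2j}|\eta|^2}{1+2^{2j}}\Big)^{s/2}\tilde p_0(\eta),$$
with $\langle 2^j\rangle := (1+2^{2j})^{1/2}$, and similarly with $-s$ in place of $s$ for the symbol of $n_j$. The key observation is that the family $\{g_j\}_{j\in\mathbb{Z}}$ (and its $-s$ analogue) is bounded in $C^\infty_c(\mathbb{R}^n)$ with a common compact support: on the fixed annulus $\textrm{supp}\,\tilde p_0$, which is separated from both $0$ and $\infty$, the quantity $(1+2^{2j}|\eta|^2)/(1+2^{2j})$ lies between two positive constants uniformly in $j$, so $(\,\cdot\,)^{\pm s/2}$ and all its derivatives are controlled. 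Consequently $\mathcal{F}^{-1}g_j$ is Schwartz with $L^1$ norm bounded uniformly in $j$, and a change of variables gives $m_j(x) = \langle 2^j\rangle^{s}\,2^{nj}\,(\mathcal{F}^{-1}g_j)(2^j x)$, whence $\|m_j\|_{L^1}=\langle 2^j\rangle^{s}\,\|\mathcal{F}^{-1}g_j\|_{L^1}\le K\,\langle 2^j\rangle^{s}$; likewise $\|n_j\|_{L^1}\le K\,\langle 2^j\rangle^{-s}$. Since $2^{js}\le \langle 2^j\rangle^{s}$ for every $j$, the reverse inequality of the lemma follows at once for all $j\in\mathbb{Z}$; since $\langle 2^j\rangle^{s}$ is comparable to $2^{js}$ for $j\ge 0$, the first inequality follows for $j\ge 0$, which is the only range invoked in Sections~2 and~3.

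I do not anticipate a genuine obstacle: this is the classical Bernstein / finite-band computation, and the only step needing care is the uniform-in-$j$ control of the rescaled symbols $g_j$, which rests entirely on the Littlewood-Paley supports being separated from the frequency origin (the same fact that legitimizes dividing by $(1+|\xi|^2)^{s/2}$). The one delicate point is the low-frequency range $j\le 0$, where $(1+|\xi|^2)^{s/2}$ is comparable to the constant $1$ rather than to $2^{js}$ on $\textrm{supp}\,\tilde p_j$; there the sharp form of the first inequality is the one with $\langle 2^j\rangle^{s}$ in place of $2^{js}$, equivalently the statement obtained by using the homogeneous weight $|\xi|^s$, for which the scaling is exact and both estimates hold for every $j\in\mathbb{Z}$. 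Since the lemma is only ever applied with $j$ large, nothing is lost by stating it with $2^{js}$.
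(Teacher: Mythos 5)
Your proof is correct, and since the paper simply cites Lemma~2.1 of \cite{BCD} without giving an argument, there is no internal proof to compare against; your write-up is the standard Bernstein/finite-band kernel argument that underlies that reference. The rescaling identity $(1+|\xi|^2)^{s/2}\tilde p_j(\xi)=\langle 2^j\rangle^s g_j(2^{-j}\xi)$ and the uniform $C^\infty_c$ bounds on $\{g_j\}$ on the fixed annulus $\mathrm{supp}\,\tilde p_0$ (both for the $+s/2$ and the $-s/2$ weights) are exactly what is needed, and the conclusion $\|m_j\|_{L^1}\le K\langle 2^j\rangle^s$, $\|n_j\|_{L^1}\le K\langle 2^j\rangle^{-s}$ by Young's inequality is right.

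Your remark about the sign of $j$ is a genuine, if minor, observation: with the inhomogeneous weight $(1+|\xi|^2)^{s/2}$, the first inequality of the lemma \emph{as literally stated} fails for $j\to-\infty$ (the left side is comparable to $\|P_j f\|_{L^p}$ while the right side tends to $0$), and the correct uniform-in-$j$ form replaces $2^{js}$ by $\langle 2^j\rangle^s$; the reverse inequality, however, does hold for all $j\in\mathbb{Z}$ since $2^{js}\le\langle 2^j\rangle^s$. As you note, this discrepancy is immaterial for the paper, which only invokes Lemma~\ref{finiteband1} for large positive $j$ (or, in the proof of Lemma~\ref{quantbound}, in a way that is insensitive to the low-frequency range). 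Everything else in your argument is sound.
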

We shall need the following $L^\infty$ estimates on the high-frequency cutoff of each bump function $\phi_Q$.
\begin{lemma}
\label{highfreq}
Let $\phi=\phi_{Q,j}$ be a bump function of type $j$ (as in Section 2). Define $$\phi_2:= \mathcal{F}^{-1}(\chi_{\abs{\xi}>\frac{1}{100}2^j}(\xi)\mathcal{F}\phi(\xi)).$$ We call $\phi_2$ the high frequency cutoff of $\phi$. Then, given any $N$, there is a constant $K$ depending only on $N$ and $\epsilon$ such that:
$$\max(\|\phi_2(x)\|_\infty,\|\mathcal{F}(\phi_2)(\xi)\|_\infty) \leq K 2^{-jN}.$$
\end{lemma}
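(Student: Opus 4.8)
The idea is a scale-mismatch estimate. The bump $\phi=\phi_{Q,j}$ has all derivatives controlled by $2^{|\alpha| j(1-\epsilon)}$, so $\mathcal{F}\phi$ decays rapidly past frequency $\sim 2^{j(1-\epsilon)}$, whereas $\phi_2$ retains only the part of $\phi$ at frequencies $|\xi|>\tfrac1{100}2^j$ --- a band a whole factor $2^{\epsilon j}$ above the natural scale of $\phi$. So $\mathcal{F}\phi$ should be super-polynomially small on the spectral support of $\phi_2$, with a gain of $2^{-\epsilon j}$ for each integration by parts, and the two stated bounds will follow by taking, respectively, a sup and an $L^1$ norm of $\mathcal{F}\phi_2=\chi_{|\xi|>\frac1{100}2^j}\,\mathcal{F}\phi$. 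In effect this is just the standard proof that a Schwartz function has rapidly decaying Fourier transform, carried out with careful bookkeeping of the parameter $j$.

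The first step is the identity $\xi^\beta\,\mathcal{F}\phi(\xi)=c_\beta\,\mathcal{F}(D^\beta\phi)(\xi)$, which gives, for every multi-index $\beta$,
$$|\xi|^{|\beta|}\,|\mathcal{F}\phi(\xi)|\le C_\beta\max_{|\beta'|=|\beta|}\|D^{\beta'}\phi\|_{L^1}\le C_\beta\,2^{|\beta| j(1-\epsilon)}\,\bigl|\textrm{supp}\,\phi\bigr|\le C_\beta'\,2^{|\beta| j(1-\epsilon)}\,2^{-nj(1-\epsilon)},$$
where I use the bump bounds $|D^{\beta'}\phi|\le C_{\beta'}2^{|\beta'| j(1-\epsilon)}$, $0\le\phi\le1$, and the fact that $\phi$ is supported in the cube $(1+2^{-\epsilon j})Q$, which is comparable to $Q$ and of volume $\le C_\epsilon 2^{-nj(1-\epsilon)}$ for the cubes relevant here (we may take $j\ge0$). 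Restricting to $|\xi|>\tfrac1{100}2^j$ and using $2^{|\beta| j(1-\epsilon)}(2^j)^{-|\beta|}=2^{-|\beta|\epsilon j}$, this yields for every integer $M\ge0$:
$$|\mathcal{F}\phi(\xi)|\le C_M\,2^{Mj(1-\epsilon)}\,2^{-nj(1-\epsilon)}\,|\xi|^{-M},\qquad\text{hence}\qquad |\mathcal{F}\phi(\xi)|\le C_M'\,2^{-M\epsilon j}\quad\bigl(|\xi|>\tfrac1{100}2^j\bigr).$$

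Now for the two bounds. Since $\mathcal{F}\phi_2=\chi_{|\xi|>\frac1{100}2^j}\,\mathcal{F}\phi$, the second displayed estimate with $M$ chosen so that $M\epsilon\ge N$ (which fixes $M$ in terms of $N$ and $\epsilon$ only) immediately gives $\|\mathcal{F}\phi_2\|_\infty=\sup_{|\xi|>\frac1{100}2^j}|\mathcal{F}\phi(\xi)|\le K2^{-jN}$. For $\|\phi_2\|_\infty$, use $\|\phi_2\|_\infty=\|\mathcal{F}^{-1}(\mathcal{F}\phi_2)\|_\infty\le C\|\mathcal{F}\phi_2\|_{L^1}=C\int_{|\xi|>\frac1{100}2^j}|\mathcal{F}\phi(\xi)|\,d\xi$; integrating the first displayed bound with any $M>n$ (so $|\xi|^{-M}$ is integrable at infinity) produces the factor $\int_{|\xi|>\frac1{100}2^j}|\xi|^{-M}\,d\xi\sim (2^j)^{n-M}$, and collecting exponents, $Mj(1-\epsilon)-nj(1-\epsilon)+(n-M)j=-(M-n)\epsilon j$, leaves the bound $C\,2^{-(M-n)\epsilon j}$; choosing $M>n$ with $(M-n)\epsilon\ge N$ gives $\|\phi_2\|_\infty\le K2^{-jN}$. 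In both cases $K$ depends only on $N$, $\epsilon$, the ambient dimension, and the fixed bump constants $C_\alpha$, as required.

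The argument is essentially routine; the only two things to watch are (i) the exponent arithmetic --- making sure each derivative nets a clean $2^{-\epsilon j}$ by correctly pairing ``a derivative of $\phi$ costs $2^{j(1-\epsilon)}$'' with ``a power of $|\xi|^{-1}$ on $\{|\xi|>\tfrac1{100}2^j\}$ gains $2^{-j}$'' --- and (ii) retaining the support-volume factor $|\textrm{supp}\,\phi|\sim 2^{-nj(1-\epsilon)}$, which is exactly what cancels the divergent $(2^j)^n$ appearing in the $L^1$ estimate for $\phi_2$ and forces the choice $M>n$ there. I anticipate no genuine obstacle beyond this.
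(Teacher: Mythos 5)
Your proof is correct, and it is cleaner and more explicit than the paper's own argument; the underlying heuristic is the same. Each integration by parts trades one derivative of $\phi$ (cost $\le C\,2^{j(1-\epsilon)}$ by the bump-function bounds) for a factor $|\xi|^{-1}$ (gain $\le C\,2^{-j}$ on $\{|\xi|>\tfrac{1}{100}2^j\}$), netting $2^{-\epsilon j}$. You carry this out uniformly for both conclusions via the classical Fourier-decay identity $\xi^\beta\mathcal{F}\phi=c_\beta\mathcal{F}(D^\beta\phi)$, reading off $\|\mathcal{F}\phi_2\|_\infty$ directly from the resulting pointwise bound and getting $\|\phi_2\|_\infty$ from $\|\mathcal{F}\phi_2\|_{L^1}$; your exponent bookkeeping checks out, and while the support-volume factor $|\mathrm{supp}\,\phi|\sim 2^{-nj(1-\epsilon)}$ tidies the arithmetic, you could also drop it at the cost of choosing $M$ with $M\epsilon-n\ge N$ rather than $M\epsilon\ge N$. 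The paper takes a less uniform route: it appeals to ``Schwartz--Paley--Wiener'' for $\|\mathcal{F}\phi_2\|_\infty$ and then handles $\|\phi_2\|_\infty$ by a separate rescaling-plus-contradiction argument. That argument as written is loose---it posits a $\psi$ with both compact spatial support and $\mathrm{supp}(\mathcal{F}\psi)=\{|\xi|\ge\tfrac{1}{100}\}$, conditions that cannot literally coexist for a nonzero function, and the closing ``the uniform derivative bounds preclude the arbitrary size of $f$'' is asserted rather than derived. Your single integration-by-parts pass proves both bounds at once, keeps the $\epsilon$-dependence of the constants explicit (as the lemma statement requires), and sidesteps the imprecision in the paper's rescaling step.
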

\begin{proof}
By the Schwartz-Paley-Wiener Theorem stated in Lemma \ref{SPW}, and since $\phi_2$ is smooth and compactly supported with Fourier transform $\mathcal{F}(\phi_2)$ supported away from the origin of frequency space, we may conclude that there is a constant $K$ depending only on $N$ such that
$$\| \mathcal{F}\phi_2(\xi)\|_\infty \leq K 2^{-jN}.$$
We prove a similar inequality for $\|\phi_2(x)\|_\infty$ by a rescaling argument. 
Let $N$ be arbitrary. Let $\psi$ be a smooth function with compact support such that $\textrm{supp}(\mathcal{F}(\psi))=\{\xi : \abs{\xi}\geq \frac{1}{100}\}$. We claim that if $\|D^\alpha \psi(x)\|_\infty\leq C_\alpha 2^{(-\epsilon\abs{\alpha}-n)j}$ for all $\alpha$, then $\| \psi(x)\|_\infty \leq K(N,\epsilon) 2^{(-N-n)j}$, where $K(N,\epsilon)$ depends only on $N$ and $\epsilon$ but not on $j$. Assuming the claim, let $\psi$ be a function satisfying the hypotheses of the claim and consider the rescaled function $$\tilde{\psi}(x):= 2^{nj}\psi(2^{j}x).$$ The Fourier transform of this function is
$$\mathcal{F}(\tilde{\psi})(\xi) = \mathcal{F}(\psi)(2^{-j}\xi).$$
The support of $\mathcal{F}(\tilde{\psi})$ is $\{\xi : \abs{\xi}\geq \frac{1}{100}2^{j}\}$, our claim yields
$$\|\tilde{\psi}(x)\|_\infty \leq 2^{-Nj},$$ and the following derivative bounds hold:
$$\abs{D^\alpha \tilde{\psi}(x)}\leq C_\alpha 2^{(\abs{\alpha}(1-\epsilon))j}.$$
Proving the $L^\infty$ bound for $\tilde{\psi}$ assuming the above derivative bounds and high frequency support is equivalent to proving the $L^\infty$ bound claimed before for $\psi$. We recognize $\phi_2$ as one such function $\tilde{\psi}$, so we just have to prove the statement regarding $\psi$ to finish the proof of the lemma.

Consider instead $f=2^{nj}\psi$, and assume to the contrary that there exists an $N$ such that for any $K$, we have
$$\|f\|_\infty >K2^{-Nj}\quad\textrm{and}\quad \|D^\alpha f\|_\infty \leq C_\alpha 2^{-\epsilon\abs{\alpha}j} \quad \forall \alpha>0 $$
However, the uniform derivative bounds preclude the arbitrary size of $f$, which is a contradiction.
\end{proof}

In the statement of the following lemmas, the constant $100$ appears. The particular value of this constant is unimportant and may be replaced by any large number of our choice. In addition, the hypothesis $k\geq j$ appears. This hypothesis can be weakened to $k\geq j-K$ for any given positive constant $K$ without any significant modification to the arguments used.
The next lemma is a commutator estimate similar to Proposition 5.2 in \cite{KP}.
\begin{lemma}
\label{commutator1}
Let $2<q<\infty$ be arbitrary. 
Suppose $\| f\|_2 \leq K$. Suppose $\phi$ is a bump function of type $j$ and let $k\geq j-2$ be arbitrary. Then, for a constant $K$ depending only on $q$:
$$\|\phi P_k f-\tilde{P}_k\phi P_k f\|_2\leq K2^{-j(100\cdot\lfloor \frac{q}{q-2}\rfloor)},$$
and
$$\|\phi \tilde{P}_k f-\tilde{\tilde{P}}_k\phi \tilde{P}_k f\|_2\leq K2^{-j(100\cdot\lfloor \frac{q}{q-2}\rfloor)}.$$
\end{lemma}
\begin{proof}
Given our bump function, we can decompose it into the sum $\phi = \phi_1+\phi_2$ such that $\mathcal{F}\phi_2(\xi)=\chi_{\abs{\xi}> \frac{1}{100}2^j}(\xi)\mathcal{F}\phi(\xi)$ for some $M$. By Lemma \ref{highfreq}, we know there is a constant $K$ depending only on $q$ such that:
$$\max\big(\|\phi_2(x)\|_\infty,\| \mathcal{F}\phi_2(\xi)\|_\infty\big) \leq K 2^{-j(100\cdot\lfloor \frac{q}{q-2}\rfloor)}.$$
On the other hand, we have:
\begin{equation}\label{YES}\tilde{P}_k\phi_1P_k f= \phi_1P_k f\end{equation}
and
\begin{equation}\label{YES2}\tilde{\tilde{P}}_k\phi_1\tilde{P}_k f= \phi_1\tilde{P}_k f.\end{equation}
Indeed, up to a dimensional constant we can write
$$\tilde{P}_k\phi_1 P_k f= \iint e^{ix\cdot \xi} \tilde{p}_k(\xi)\mathcal{F}(\phi_1)(z)p_k(\xi-z)\mathcal{F}(f)(\xi-z)dzd\xi$$
and
$$\tilde{\tilde{P}}_k\phi_1 \tilde{P}_k f= \iint e^{ix\cdot \xi} \tilde{\tilde{p}}_k(\xi)\mathcal{F}(\phi_1)(z)\tilde{p}_k(\xi-z)\mathcal{F}(f)(\xi-z)dzd\xi.$$
Now if $\xi-z$ is in the support of $p_k$ and $\abs{z}<\frac{1}{100}2^j$, then, since $k\geq j-2$, we have that $\xi$ remains well inside the support of $\tilde{p}_k$. It follows that the $\tilde{p}_k$ term in the integrand can be discounted, so Equation (\ref{YES}) is true. Likewise if $\xi-z$ is in the support of $\tilde{p}_k$ and $\abs{z}<\frac{1}{100}2^j$, then, since $k\geq j-2$, we have that $\xi$ remains well inside the support of $\tilde{\tilde{p}}_k$. It follows that the $\tilde{\tilde{p}}_k$ term in the integrand can be discounted, so Equation (\ref{YES2}) is true. Now observing that 
$$\phi P_k f- \tilde{P}_k \phi P_k f= \phi_1 P_k f-\tilde{P}_k \phi_1P_k f+\phi_2 P_k f-\tilde{P}_k\phi_2 P_k f$$ and that $\|f\|_2\leq K$, we conclude that
$$\|\phi P_k f- \tilde{P}_k \phi P_k f\|_2 = \|\phi_2 P_k f-\tilde{P}_k \phi_2 P_k f\|_2 \leq K\max(\|\phi_2(x)\|_\infty,\|\mathcal{F}(\phi_2)(\xi)\|_\infty)\leq$$ $$\leq K2^{-j(100\cdot\lfloor \frac{q}{q-2}\rfloor)}$$
Likewise we have
$$\phi \tilde{P}_k f- \tilde{\tilde{P}}_k \phi \tilde{P}_k f= \phi_1 \tilde{P}_k f-\tilde{\tilde{P}}_k \phi_1\tilde{P}_k f+\phi_2 \tilde{P}_k f-\tilde{\tilde{P}}_k\phi_2 \tilde{P}_k f$$ and that $\|f\|_2\leq K$, we conclude that
$$\|\phi \tilde{P}_k f- \tilde{\tilde{P}}_k \phi\tilde{ P}_k f\|_2 = \|\phi_2 \tilde{P}_k f-\tilde{\tilde{P}}_k \phi_2 \tilde{P}_k f\|_2 \leq K\max(\|\phi_2(x)\|_\infty,\|\mathcal{F}(\phi_2)(\xi)\|_\infty)\leq$$ $$\leq K2^{-j(100\cdot\lfloor \frac{q}{q-2}\rfloor)}$$
\end{proof}
One case of the next commutator estimate is used implicitly in \cite{KP}, although neither a statement nor a proof is given there.
\begin{lemma}
\label{infcommute}
Let $2<q<\infty$ be arbitrary. 
Suppose $\| f\|_2 \leq K$. Suppose $\phi$ is a bump function of type $j$ and let $k\geq j-2$ be arbitrary. Then, for a constant depending only on $q$:
$$\|\phi P_k f-\tilde{P}_k\phi P_k f\|_\infty\leq K2^{-j(100\cdot\lfloor \frac{q}{q-2}\rfloor)}.$$
\end{lemma}
\begin{proof}
As before, we decompose $\phi=\phi_1+\phi_2$ where $\phi_2$ is the high frequency cutoff. By Lemma \ref{highfreq}, where we let $N=(100\cdot\lfloor \frac{q}{q-2}\rfloor)$, there is a constant $K$ depending only on $q$ such that:
$$\max\big(\|\phi_2(x)\|_\infty,\| \mathcal{F}\phi_2(\xi)\|_\infty\big) \leq K 2^{-j(100\cdot\lfloor \frac{q}{q-2}\rfloor)}.$$
As in the proof of Lemma \ref{commutator1}, we just have to estimate:
$$\|\phi_2 P_k f-\tilde{P}_k\phi_2 P_k f\|_\infty.$$
Now using the fact that $\|f\|_{L^2}\leq K$, our $L^\infty$ bounds for $\phi_2$ and $\mathcal{F}(\phi_2)$, and Lemma \ref{PIneq}, we reach our desired conclusion.
\end{proof}
\begin{lemma}\label{GOODYTWO}
Let $\phi$ be a bump function of type $j$, let $k\geq j-2$ and $2<q<\infty$ be arbitrary. Suppose also that $\|f \|_2 \leq K$. Then:
$$\|\phi P_k f\|_\infty \leq K 2^{nk/2}\|\phi P_k f\|_2 +K 2^{-j(100\cdot\lfloor \frac{q}{q-2}\rfloor)}$$
\end{lemma}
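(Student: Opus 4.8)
The plan is to deduce this from the commutator estimate of Lemma \ref{infcommute} together with the Bernstein-type inequality of Lemma \ref{PIneq}, via a single triangle-inequality split. First I would write
$$\|\phi P_k f\|_\infty \leq \|\tilde{P}_k \phi P_k f\|_\infty + \|\phi P_k f - \tilde{P}_k \phi P_k f\|_\infty .$$
The second term is precisely what Lemma \ref{infcommute} controls: applied with the same $q$ and with the hypothesis $\|f\|_2 \leq K$, it gives $\|\phi P_k f - \tilde{P}_k \phi P_k f\|_\infty \leq K 2^{-j(100\cdot\lfloor q/(q-2)\rfloor)}$, which is exactly the error term appearing in the statement (and $k \geq j$ is used only here).

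For the first term I would invoke Lemma \ref{PIneq} in the endpoint case $q = \infty$, which reads $\|P_m g\|_\infty \leq K 2^{nm/2}\|P_m g\|_2$ with $K$ independent of $m$. Since $\tilde{P}_k = \sum_{l=-2}^{2} P_{k+l}$ is a finite sum of Littlewood-Paley pieces of comparable frequency, summing these estimates yields $\|\tilde{P}_k g\|_\infty \leq K 2^{nk/2}\|g\|_2$ for any $g \in L^2$; here I also use that each $P_{k+l}$ has a bounded symbol and hence is bounded on $L^2$ uniformly in $k$ and $l$, so that $\|P_{k+l} g\|_2 \leq K\|g\|_2$. Taking $g = \phi P_k f$ gives $\|\tilde{P}_k \phi P_k f\|_\infty \leq K 2^{nk/2}\|\phi P_k f\|_2$. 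Combining the two bounds in the displayed split yields the asserted inequality.

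There is no genuine obstacle here: the lemma is a routine combination of the frequency-localization (Bernstein) estimate with the bump-function/projection commutator bound. The only point that deserves a line of care is the passage from Lemma \ref{PIneq}, stated for a single projection $P_j$, to the thickened projection $\tilde{P}_k$, which is immediate from the finite-sum definition of $\tilde{P}_k$ and the uniform $L^2$-boundedness of the individual Littlewood-Paley operators. As in the neighbouring lemmas, it is also worth noting that the constant $100$ in the exponent is arbitrary and may be enlarged at will by an appropriate choice of $q$, which is how this error term will later be rendered negligible.
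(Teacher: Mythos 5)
Your proposal is correct and matches the paper's proof exactly: both rely on the same additive split $\phi P_k f = (\phi P_k f - \tilde P_k \phi P_k f) + \tilde P_k \phi P_k f$, bound the first term by Lemma \ref{infcommute}, and bound the second via the $q=\infty$ case of Lemma \ref{PIneq}. The only extra content in your write-up is the explicit remark on passing from $P_j$ to $\tilde P_k$, which the paper leaves implicit; this is a harmless elaboration, not a change of method.
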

\begin{proof}
This follows immediately from Lemma \ref{infcommute}, Lemma \ref{PIneq}, and the observation that
$$\phi P_k f= \phi P_k f -\tilde{P}_k\phi P_k f+ \tilde{P}_k\phi P_k f.$$
\end{proof}
The next lemma extends Lemma 5.3 in \cite{KP}.
\begin{lemma}
\label{ineq2}
Let $2\leq q \leq \infty$, $\|f\|_2\leq K$, and let $\phi$ be a bump function of type $j$. Let $k\geq j-2$ be arbitrary. Then:
$$\|\phi P_k f \|_q \leq K 2^{nk(1/2-1/q)}\|\phi P_k f\|_2 + K2^{-100j}$$
\end{lemma}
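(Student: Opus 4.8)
The plan is to treat Lemma~\ref{ineq2} as the $L^q$ analogue of Lemma~\ref{GOODYTWO}, proved by the very same decomposition. First I would write
$$\phi P_k f = \tilde P_k(\phi P_k f) + (\phi P_k f - \tilde P_k \phi P_k f),$$
so that, by the triangle inequality, $\|\phi P_k f\|_q$ is bounded by the sum of the $L^q$ norms of the two pieces. The first piece is frequency-localized at scale $2^k$ and will furnish the main term $2^{nk(1/2-1/q)}\|\phi P_k f\|_2$; the second is a commutator remainder that I will show is negligible.

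For the localized piece, since $\tilde P_k = \sum_{l=-2}^{2} P_{k+l}$, I would apply Lemma~\ref{PIneq} to each $P_{k+l}(\phi P_k f)$, use $\|P_{k+l}g\|_2 \le \|g\|_2$ (the symbols $p_j$ are nonnegative and sum to $1$, hence are $\le 1$), and note that $2^{n(k+l)(1/2-1/q)} \le C\,2^{nk(1/2-1/q)}$ for $|l|\le 2$. Summing the five terms gives
$$\|\tilde P_k(\phi P_k f)\|_q \le K\,2^{nk(1/2-1/q)}\|\phi P_k f\|_2,$$
which is exactly the first term on the right-hand side of the claim. This step is routine Bernstein-type estimation.

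For the commutator remainder, the hypotheses of Lemma~\ref{ineq2} --- namely $\|f\|_2\le K$, $k\ge j$, and $\phi$ of type $j$ --- are precisely those of Lemmas~\ref{commutator1} and~\ref{infcommute}. Applying those two lemmas with the auxiliary exponent fixed at, say, $4$ (so that $\lfloor 4/(4-2)\rfloor = 2$) gives simultaneously
$$\|\phi P_k f - \tilde P_k \phi P_k f\|_2 \le K\,2^{-200j}, \qquad \|\phi P_k f - \tilde P_k \phi P_k f\|_\infty \le K\,2^{-200j}.$$
The elementary interpolation inequality $\|g\|_q \le \|g\|_2^{2/q}\|g\|_\infty^{1-2/q}$, valid for every $2\le q\le\infty$ (with the cases $q=2$ and $q=\infty$ understood directly), then shows that the $L^q$ norm of the remainder is at most $K\,2^{-200j}\le K\,2^{-100j}$. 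Adding this to the estimate for the localized piece yields the lemma, with $K$ depending on $q$ and $\epsilon$.

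The only point needing a moment's thought --- not really an obstacle --- is making sure the error is a genuine negative power of $2^j$ whose exponent does not degenerate as $q\to 2$ (which is what would happen if one instead interpolated $\phi P_k f$ itself between $L^2$ and $L^\infty$ via Lemma~\ref{GOODYTWO}). This is automatic in the scheme above, because Lemmas~\ref{commutator1} and~\ref{infcommute} already deliver the \emph{same} bound in $L^2$ and in $L^\infty$ --- with exponent $100\lfloor q'/(q'-2)\rfloor$, which can be made as large as we wish by taking the auxiliary $q'$ close to $2$ --- so the interpolation step loses nothing. No commutator estimate beyond those already proved in Section~5.2 is required.
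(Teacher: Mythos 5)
Your proof is correct, but it is organized differently from the paper's. The paper proves the $q=\infty$ endpoint separately (Lemma \ref{GOODYTWO}) and then interpolates the \emph{whole} quantity $\|\phi P_k f\|_q$ between $L^2$ and $L^\infty$ via log-convexity, expanding $(2^{nk/2}A+K2^{-100j\lfloor q/(q-2)\rfloor})^{(q-2)/q}$ with the elementary inequality $(a+b)^p\le 2^p(a^p+b^p)$; the floor-function exponent in Lemmas \ref{commutator1} and \ref{GOODYTWO} exists precisely to compensate for the loss incurred when the error term is raised to the power $(q-2)/q$. You instead perform the decomposition $\phi P_k f=\tilde P_k(\phi P_k f)+(\phi P_k f-\tilde P_k\phi P_k f)$ at the level of $L^q$ itself, estimate the localized piece directly by Bernstein (Lemma \ref{PIneq}, together with $\|P_{k+l}g\|_2\le\|g\|_2$, which holds since $0\le p_{k+l}\le 1$), and interpolate only the commutator remainder between the $L^2$ and $L^\infty$ bounds of Lemmas \ref{commutator1} and \ref{infcommute}. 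Both routes rest on the same two ingredients (Bernstein plus the commutator estimates), so neither is more general, but yours is arguably cleaner: because the remainder obeys the \emph{same} bound at both interpolation endpoints, the error exponent survives interpolation intact and does not degenerate as $q\to 2^+$ or require tracking $\lfloor q/(q-2)\rfloor$ through a power $(q-2)/q$ — a point where the paper's own bookkeeping is in fact slightly loose (for $q>4$ the interpolated error is only $2^{-100j(q-2)/q}$, which is harmless since the constant $100$ is declared replaceable by any large number, but your version avoids the issue altogether).
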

\begin{proof}
When $q=2$ there is nothing to show. When $q=\infty$, the result is proven in Lemma \ref{GOODYTWO}, and the exponent in the error term can be made arbitrarily negative. We prove the remaining cases by interpolation. First, it is evident from Lemma \ref{PIneq} that the function $\phi P_k f \in L^q$ for all $2\leq q \leq \infty$. In particular, since the $L^p$ norms are log-convex:
$$\|\phi P_k f\|_q \leq \|\phi P_k f\|_2^{2/q} \| \phi P_k f\|_\infty^{1-2/q}$$
For the remainder of the proof, we denote $A:= \|\phi P_k f\|_2$. Using Lemma \ref{GOODYTWO}, which is the case $q=\infty$ in the statement of our lemma, we have that:
$$\|\phi P_k f\|_q \leq A^{2/q}\bigg(2^{nk/2}A+ K2^{-100j\lfloor \frac{q}{q-2}\rfloor}\bigg)^{(q-2)/q}$$
Using the elementary inequality $(a+b)^p\leq 2^p(a^p+b^p)$ for $p\geq 0$ and the fact that $q>2$ gets us:
$$\|\phi P_k f\|_q \leq 2^{(q-2)/q} A^{2/q}\bigg(2^{\frac{nk}{2}\cdot\frac{q-2}{q}}A^{(q-2)/q}+ K2^{-100j}\bigg)$$
Using the fact that $\|f\|_2\leq K$ we conclude:
$$\|\phi P_k f\|_q \leq K 2^{nk(\frac{1}{2}-\frac{1}{q})}\|\phi P_k f\|_2 +K2^{-100j}$$
as desired.
\end{proof}
We can commute bump functions with Littlewood-Paley projections, as long as we multiply with another bump function with slightly larger support.
\begin{lemma}
\label{bumpcommute}
Let $k\geq j-2$, let $\phi_{Q,j}$ be a bump function at level $j$. Let $\beta<100$ and consider $f$ with $\| f\|_{H^\beta}\leq K$. Then
$$\|(1-\phi_{(1+2^{-\epsilon j})Q,j})P_k \phi_{Q,j} f\|_2 \leq K2^{-100j}$$
and
$$\|(1-\phi_{(1+2^{-\epsilon j})Q,j})\tilde{P}_k \phi_{Q,j} f\|_2 \leq K2^{-100j}.$$
\end{lemma}
\begin{proof}
By Lemma \ref{smoothing}, we see that
$$(1-\phi_{(1+2^{-\epsilon j})Q,j})P_k\phi_{Q,j}$$
is a smoothing pseudodifferential operator. By Lemma \ref{commutator1} and since $\tilde{P}_k P_k = P_k$, we have
$$(1-\phi_{(1+2^{-\epsilon j})Q,j})P_k\phi_{Q,j} = P_k\phi_{Q,j} - \phi_{(1+2^{-\epsilon j})Q,j}P_k\phi_{Q,j}  =$$ $$= \tilde{P}_kP_k\phi_{Q,j} - \tilde{P}_k\phi_{(1+2^{-\epsilon j})Q,j}P_k\phi_{Q,j} +R_j = \tilde{P}_k((1-\phi_{(1+2^{-\epsilon j})Q,j})P_k\phi_{Q,j}) +R_j,$$
where $R_j$ is an error term satisfying $\abs{R_j}\leq K2^{-200j}$.
By Lemma \ref{quantbound}, we have a quantitative bound on the Sobolev smoothing operator that appears, thus proving our desired inequality. By Lemma \ref{smoothing}, we see that
$$(1-\phi_{(1+2^{-\epsilon j})Q,j})\tilde{P}_k\phi_{Q,j}$$
is a smoothing pseudodifferential operator. By Lemma \ref{commutator1} and since $\tilde{\tilde{P}}_k \tilde{P}_k = \tilde{P}_k$, we have
$$(1-\phi_{(1+2^{-\epsilon j})Q,j})\tilde{P}_k\phi_{Q,j} = \tilde{P}_k\phi_{Q,j} - \phi_{(1+2^{-\epsilon j})Q,j}\tilde{P}_k\phi_{Q,j}  =$$ $$= \tilde{\tilde{P}}_k\tilde{P}_k\phi_{Q,j} - \tilde{\tilde{P}}_k\phi_{(1+2^{-\epsilon j})Q,j}\tilde{P}_k\phi_{Q,j} +R_j = \tilde{\tilde{P}}_k((1-\phi_{(1+2^{-\epsilon j})Q,j})\tilde{P}_k\phi_{Q,j}) +R_j,$$
where $R_j$ is an error term satisfying $\abs{R_j}\leq K2^{-200j}$.
By Lemma \ref{quantbound}, we have a quantitative bound on the Sobolev smoothing operator that appears, thus proving our desired inequality. The same proof works to get an arbitrarily negative exponent in the error term instead.
\end{proof}
Note that in the previous lemma, the order of the bump functions in the composition was unimportant. The only essential fact was disjointness of support.
\begin{lemma}
\label{collectbound}
Let $\mathcal{A}$ be a collection of cubes at level $j$ covering a measurable set $E$. For any $f\in L^2$, we have:
$$\|\chi_E P_j f\|_2^2 \leq \sum_{Q\in\mathcal{A}} f_Q^2$$
\end{lemma}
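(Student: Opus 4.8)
\section*{Proof proposal for Lemma \ref{collectbound}}

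The plan is to reduce the estimate to a single pointwise inequality between characteristic functions and the squared bump functions $\phi_{Q,j}$, and then integrate against $|P_j f|^2$. Here I read $E$ as the union $\bigcup_{Q\in\mathcal{A}}Q$ (or, more generally, any measurable set contained in that union — this is all that is used, and the inequality only improves if the cubes of $\mathcal A$ overlap). Recall that for a cube $Q$ at level $j$ one has $f_Q=\|\phi_{Q,j}P_j f\|_2$, where $\phi_{Q,j}$ is a bump function of type $j$ satisfying $0\le\phi_{Q,j}\le 1$ everywhere and $\phi_{Q,j}\equiv 1$ on $Q$. Consequently $\chi_Q=\chi_Q^2\le\phi_{Q,j}^2$ pointwise, and since $\chi_E\le\sum_{Q\in\mathcal A}\chi_Q$ we obtain the pointwise bound
$$\chi_E\ \le\ \sum_{Q\in\mathcal{A}}\phi_{Q,j}^2 .$$

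Multiplying by $|P_j f|^2\ge 0$, integrating, and interchanging sum and integral (all integrands are nonnegative, so Tonelli applies), and using that $\phi_{Q,j}$ is real-valued so that $|\phi_{Q,j}P_j f|^2=\phi_{Q,j}^2|P_j f|^2$, I would conclude
$$\|\chi_E P_j f\|_2^2=\int \chi_E\,|P_j f|^2\ \le\ \sum_{Q\in\mathcal{A}}\int \phi_{Q,j}^2\,|P_j f|^2=\sum_{Q\in\mathcal{A}}\|\phi_{Q,j}P_j f\|_2^2=\sum_{Q\in\mathcal{A}}f_Q^2 ,$$
which is the claimed inequality.

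There is no genuine obstacle in this argument; the only points requiring a word of care are the identification of the set $E$ with the union of the cubes of $\mathcal A$ and the observation that pairwise disjointness of $\mathcal A$ is not needed (over-counting can only help). If one does want the sharper reading with $\mathcal A$ a disjoint family, it suffices to note that cubes sharing a common level $j$, drawn from the fixed grid of side-length $2^{-j(1-\epsilon)}$, tile space and hence distinct such cubes overlap only in a Lebesgue-null set, so that $\chi_E=\sum_{Q\in\mathcal A}\chi_Q$ almost everywhere.
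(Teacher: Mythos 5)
Your argument is correct and is essentially the paper's own proof: both reduce to the pointwise inequality $\chi_E\le\sum_{Q\in\mathcal A}\phi_{Q,j}^2$ and then integrate against $|P_jf|^2$. You have merely spelled out the steps (Tonelli, $\chi_Q\le\phi_{Q,j}^2$, the reading of $E$) that the paper leaves implicit.
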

\begin{proof}
We notice that
$$\int \chi_E \abs{P_j f}^2 \leq \int \bigg(\sum_{Q\in\mathcal{A}} \phi_{Q,j}^2\bigg)\abs{P_j f}^2 =\sum_{Q\in\mathcal{A}}f_Q^2.$$ 
\end{proof}
\subsection{Hausdorff Measure and Dimension}
We denote a ball of radius $r$ in $\mathbb{R}^n$ by $B_r$.
Let $E\subset \mathbb{R}^n$. We recall that, up to a dimensional constant multiple, the $d$-dimensional Hausdorff measure of $E$ is given by
$$H^d(E) := \sup_{\delta >0} \inf_{\mathcal{C}_\delta(E)} \sum_{B_r\in \mathcal{C}_\delta(E)}r^d$$
where we have taken an infimum over all coverings $C_\delta(E)$ of $E$ by balls of radius less than or equal to $\delta$.
We define the Hausdorff dimension of $E$ to be:
$$\mathcal{H}(E) = \inf \{ d : H^d(E)=0\}.$$
We need a way to compute the Hausdorff dimension that follows from a discretization by sets of scale $2^j$. In particular we have the following lemma from \cite{KP}:
\begin{lemma}
\label{dimcompute}
Let $\mathcal{A}_j$ be a sequence of collections of balls in $\mathbb{R}^n$ so that each element of $\mathcal{A}_j$ has radius $2^{-j}$. Suppose that the number of balls in each $\mathcal{A}_j$, denoted $N(\mathcal{A}_j)$, is bounded by $N(\mathcal{A}_j)\leq C 2^{jd}$ where $C$ is independent of $j$. Let
$$E = \limsup_{j\to \infty}\mathcal{A}_j:= \cap_{j\in \mathbb{N}} \cup_{k>j} \cup_{B\in \mathcal{A}_k} B$$
be the set of points in infinitely many of the unions $\cup_{B\in\mathcal{A}_j} B$. Then $\mathcal{H}(E)\leq d$. \end{lemma}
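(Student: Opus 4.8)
The plan is to prove the contrapositive-flavored statement that $H^{d'}(E) = 0$ for \emph{every} $d' > d$; once this is established, $\mathcal{H}(E) = \inf\{d' : H^{d'}(E) = 0\} \le d$ follows immediately from the definition of Hausdorff dimension. So fix $d' > d$, and fix $\delta > 0$; the goal is to exhibit one admissible cover of $E$ (by balls of radius $\le \delta$) whose $d'$-sum is small, small enough that the outer $\sup_{\delta > 0}$ in the definition of $H^{d'}$ is forced to be $0$.

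First I would unwind the $\limsup$. By definition $E = \bigcap_{j \in \mathbb{N}} \bigcup_{k > j} \bigcup_{B \in \mathcal{A}_k} B$, so for \emph{every} $j$ the family $\mathcal{C}_j := \{\, B : B \in \mathcal{A}_k \text{ for some } k > j \,\}$ covers $E$. Since each $B \in \mathcal{A}_k$ has radius $2^{-k}$ and $k \ge j+1$ on this family, every ball in $\mathcal{C}_j$ has radius at most $2^{-(j+1)}$. Hence, given $\delta > 0$, picking $j = j(\delta)$ with $2^{-(j+1)} \le \delta$ makes $\mathcal{C}_j$ an admissible cover in the infimum defining $H^{d'}$ at scale $\delta$. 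Grouping the balls of $\mathcal{C}_j$ by their scale $k$ and using the hypothesis $N(\mathcal{A}_k) \le C 2^{kd}$ gives
$$\sum_{B_r \in \mathcal{C}_j} r^{d'} \;=\; \sum_{k > j} N(\mathcal{A}_k)\,(2^{-k})^{d'} \;\le\; C \sum_{k > j} 2^{-k(d'-d)} \;=\; \frac{C\,2^{-(j+1)(d'-d)}}{1 - 2^{-(d'-d)}},$$
where the geometric series converges precisely because $d' - d > 0$. Therefore the infimum over all $\delta$-covers is bounded by the right-hand side, which tends to $0$ as $\delta \to 0$ (equivalently $j(\delta) \to \infty$); since that inner infimum is monotone in $\delta$, its supremum over $\delta > 0$ is its limit as $\delta \to 0^+$, namely $0$. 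This yields $H^{d'}(E) = 0$, and letting $d' \downarrow d$ completes the argument.

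I do not expect a genuine obstacle: this is the textbook "a set covered at arbitrarily fine scales $2^{-k}$ by $O(2^{kd})$ balls of radius $2^{-k}$ has Hausdorff dimension $\le d$" computation, powered entirely by the convergence of a geometric series. The one place to be slightly careful is the bookkeeping in the definition of $H^{d'}$, i.e. remembering that it suffices to produce, for each $\delta$, a \emph{single} good cover by balls of radius $\le \delta$, and that the outer $\sup_{\delta>0}$ causes no trouble because the quantity it acts on is monotone in $\delta$ and we have driven it to $0$. One should also note in passing that replacing the balls of $\mathcal{A}_k$ by balls of radius exactly $2^{-k}$ (rather than $\le 2^{-k}$) only inflates the sum, so the stated hypothesis is exactly what is used.
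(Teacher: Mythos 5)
Your proof is correct and follows essentially the same approach as the paper: use the tail cover $\bigcup_{k>j}\bigcup_{B\in\mathcal{A}_k}B$ as the $\delta$-admissible cover, bound the $d'$-sum by the geometric series $C\sum_{k>j}2^{-k(d'-d)}$, and send $j\to\infty$ to conclude $H^{d'}(E)=0$ for every $d'>d$. The only difference is cosmetic — you spell out the $\sup_\delta$ monotonicity bookkeeping a bit more explicitly than the paper does.
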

\begin{proof}
By the definition of the Hausdorff dimension, it suffices to show that $H^s(E)=0$ for all $s>d$. To that end, pick $j$ large enough that $2^{-j} <\delta$. By construction of our set, $E$ can be covered by $\cup_{k>j} \cup_{B\in \mathcal{A}_k} B$. Evidently, we have
$$H^s(E)\leq \sum_{k>j} N(\mathcal{A}_k)(2^{-k})^s\leq C \sum_{k>j} 2^{kd}(2^{-k})^s $$
and the right-hand side above converges to zero as $j\to \infty$ so long as $d<s$.
\end{proof}
We also include in this subsection the standard Vitali covering lemma, which is often used in estimating the Hausdorff dimension of a set.
\begin{lemma}[Vitali]
\label{Vitali}
Let $\mathcal{A}$ be a collection of cubes. Then there is subcollection $\mathcal{A}'$ so that any two cubes in $\mathcal{A}'$ are pairwise disjoint and 
$$\bigcup_{Q\in \mathcal{A}} Q \subset \bigcup_{Q\in \mathcal{A}'} 5Q.$$
\end{lemma}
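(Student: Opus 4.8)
The plan is to prove this by the classical greedy-selection argument, organized by dyadic scales so that every cube of $\mathcal{A}$ is forced to meet a selected cube of comparable or larger size (this is what keeps the dilation factor at the absolute constant $5$). First I would note that in every application of the lemma in this paper the cubes of $\mathcal{A}$ all lie at a single level, so we may (and do) assume $R:=\sup_{Q\in\mathcal{A}}\textrm{side-length}(Q)<\infty$; some such finiteness is genuinely needed for the statement, but the bounded case is all that is ever invoked.

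Next I would isolate the one geometric input: if $Q,Q'$ are axis-parallel cubes with $Q\cap Q'\neq\emptyset$ and $\textrm{side-length}(Q')\geq\tfrac12\,\textrm{side-length}(Q)$, then $Q\subset 5Q'$. This is checked coordinate by coordinate. Writing $s_Q,s_{Q'}$ for the side lengths and $c,c'$ for the centers, a point $z\in Q\cap Q'$ gives $|c_i-c'_i|\leq\tfrac12(s_Q+s_{Q'})$ in each coordinate $i$, so any $x\in Q$ satisfies $|x_i-c'_i|\leq\tfrac12 s_Q+\tfrac12(s_Q+s_{Q'})=s_Q+\tfrac12 s_{Q'}\leq\tfrac52 s_{Q'}$, which is exactly the half-side length of $5Q'$.

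Then I would carry out the selection. Stratify $\mathcal{A}=\bigcup_{n\geq 1}\mathcal{A}_n$ with $\mathcal{A}_n:=\{Q\in\mathcal{A}:2^{-n}R<\textrm{side-length}(Q)\leq 2^{-n+1}R\}$, and define $\mathcal{A}'_n$ recursively: having chosen $\mathcal{A}'_1,\dots,\mathcal{A}'_{n-1}$, let $\mathcal{A}'_n\subset\mathcal{A}_n$ be maximal with respect to inclusion among those subcollections whose members are pairwise disjoint and also disjoint from every cube already chosen. (Such a maximal subcollection exists by Zorn's lemma; in all the applications $\mathcal{A}$ is countable — any disjoint family of nondegenerate cubes is countable, each cube containing a distinct point with rational coordinates — so a plain greedy enumeration suffices.) Set $\mathcal{A}':=\bigcup_n\mathcal{A}'_n$; its cubes are pairwise disjoint by construction. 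Finally, given $Q\in\mathcal{A}$, say $Q\in\mathcal{A}_n$: by maximality of $\mathcal{A}'_n$ the cube $Q$ must intersect some $Q'\in\mathcal{A}'_1\cup\cdots\cup\mathcal{A}'_n$ (else $Q$ could be adjoined), and any such $Q'$ has $\textrm{side-length}(Q')>2^{-n}R\geq\tfrac12\,\textrm{side-length}(Q)$, so the geometric fact above yields $Q\subset 5Q'$. Therefore $\bigcup_{Q\in\mathcal{A}}Q\subset\bigcup_{Q'\in\mathcal{A}'}5Q'$, which is the claim.

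The only real subtlety — the \emph{main obstacle}, such as it is for this classical lemma — is the one-scale-at-a-time bookkeeping: a single maximal disjoint subcollection of all of $\mathcal{A}$ does not work, since a small cube of $\mathcal{A}$ might meet only much smaller selected cubes and then no fixed dilate controls it. Grouping by dyadic scale and always processing larger cubes before smaller ones is exactly what forces each cube to meet a selected cube of at least half its size, and hence to sit inside the $5$-fold dilate.
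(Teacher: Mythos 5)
The paper does not actually prove this lemma: it is presented in Section 5.3 as ``the standard Vitali covering lemma'' with no argument attached, so there is no authorial proof to compare against. Your proof is the classical one — stratify by dyadic scale, select maximal disjoint subfamilies from largest scale downward, and use the elementary geometric fact that a cube meeting a cube of at least half its side length lies inside the fivefold dilate of the latter — and it is correct. The coordinate-wise computation $|x_i - c'_i| \leq s_Q + \tfrac12 s_{Q'} \leq \tfrac52 s_{Q'}$ is right, the maximality argument correctly produces a $Q'$ from a strictly earlier or equal dyadic band, and the scale bookkeeping gives $s_{Q'} > 2^{-n}R \geq \tfrac12 s_Q$ as needed. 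One genuinely useful observation you make is that the lemma as literally stated in the paper is false without a boundedness hypothesis on $\sup_Q \mathrm{side\text{-}length}(Q)$ (e.g.\ the nested cubes $[-n,n]^3$), and that this hypothesis is harmlessly available in the paper's only invocation (Proposition 2.3), where $\mathcal{A}$ consists of equi-sized cubes $2^{3000/\epsilon}Q$ with $Q$ at a fixed level $j$. If anything, for the paper's purposes the full dyadic stratification is overkill — with all cubes of a single side length one maximal disjoint subfamily suffices, and even the dilation factor $3$ would do — but there is no harm in proving the stated general form.
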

\subsection{Zero-Momentum Schwartz Vector Fields}
\begin{lemma}
\label{DivF}
A Schwartz function $\psi: \mathbb{R}^3 \to \mathbb{R}$ can be written as $\psi = \textrm{div } \Psi$ for some Schwartz vector field $\Psi$ if and only if $\int_{{\mathbb{R}^3}}\psi =0$.
\end{lemma}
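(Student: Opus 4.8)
The plan is to prove the two implications separately. The "only if" direction is immediate: if $\psi = \operatorname{div}\Psi$ for a Schwartz vector field $\Psi$, then integrating by parts and using the rapid decay of $\Psi$ to kill the boundary terms at infinity gives $\int_{\mathbb{R}^3}\psi = \int_{\mathbb{R}^3}\operatorname{div}\Psi = 0$. For the converse I would prove, more generally, the statement on $\mathbb{R}^n$ for every $n\geq 1$ by induction on $n$, since the construction is cleanest that way. The base case $n=1$ is the heart of the matter: given a Schwartz function $\psi$ on $\mathbb{R}$ with $\int_{\mathbb{R}}\psi = 0$, set $\Psi(x) := \int_{-\infty}^{x}\psi(t)\,dt$, so that $\Psi' = \psi$. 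Then $\Psi$ is smooth, and it is Schwartz because for $x \geq 0$ one may rewrite $\Psi(x) = -\int_{x}^{\infty}\psi(t)\,dt$ using the zero-mean hypothesis, whence the rapid decay of $\psi$ forces rapid decay of $\Psi$ as $x\to+\infty$; the case $x\to-\infty$ is symmetric, and every derivative of $\Psi$ is either a derivative of $\psi$ or again an antiderivative of a zero-mean Schwartz function, handled identically. This is the only step where the hypothesis $\int\psi=0$ is genuinely used.

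For the inductive step I would write $x=(x',x_n)\in\mathbb{R}^{n-1}\times\mathbb{R}$, fix once and for all a Schwartz function $\rho$ on $\mathbb{R}$ with $\int_{\mathbb{R}}\rho = 1$, and let $g(x') := \int_{\mathbb{R}}\psi(x',t)\,dt$, which is Schwartz on $\mathbb{R}^{n-1}$ with $\int_{\mathbb{R}^{n-1}}g = \int_{\mathbb{R}^n}\psi = 0$. Setting $h(x) := \psi(x) - g(x')\rho(x_n)$ produces a Schwartz function with $\int_{\mathbb{R}}h(x',t)\,dt = 0$ for every fixed $x'$, so $\Psi_n(x) := \int_{-\infty}^{x_n}h(x',t)\,dt$ is Schwartz on $\mathbb{R}^n$ by the one-dimensional argument applied in the variable $x_n$ (uniformly in $x'$, using that $\int_{\mathbb{R}}\partial^{\alpha}_{x'}h(x',t)\,dt = 0$ to control the $x'$-derivatives), and $\partial_n\Psi_n = h$. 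By the inductive hypothesis there is a Schwartz vector field $(G_1,\dots,G_{n-1})$ on $\mathbb{R}^{n-1}$ with $\sum_{i=1}^{n-1}\partial_i G_i = g$; then $\Psi_i(x):=G_i(x')\rho(x_n)$ for $1\leq i\leq n-1$ are Schwartz on $\mathbb{R}^n$ and $\sum_{i=1}^{n-1}\partial_i\Psi_i = g(x')\rho(x_n)$. Hence $\Psi=(\Psi_1,\dots,\Psi_n)$ is Schwartz and $\operatorname{div}\Psi = \partial_n\Psi_n + g(x')\rho(x_n) = h + g(x')\rho(x_n) = \psi$, completing the induction; the case $n=3$ is exactly what the lemma asserts.

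I do not anticipate a genuine obstacle here. The only things requiring care are the repeated claims that the antiderivative in one variable of a (partially) zero-mean Schwartz function is again Schwartz, and that partial integration over one variable and tensor products of Schwartz functions in separated groups of variables preserve the Schwartz class — all standard, but worth stating cleanly. It is worth remarking that the naive Fourier-side construction $\widehat{\Psi}(\xi) = -i\,\xi\,|\xi|^{-2}\,\widehat{\psi}(\xi)$, although it formally solves $\operatorname{div}\Psi=\psi$ and is well defined thanks to $\widehat\psi(0)=0$, does not work: the multiplier is merely smooth away from the origin and typically only bounded (not even continuous, let alone Schwartz) at the origin, so the resulting $\Psi$ need not be Schwartz. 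This is precisely why the physical-space induction is the right route.
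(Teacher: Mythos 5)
Your proof is correct, and it takes a genuinely different route from the paper's. The paper attempts a direct three-variable construction: it sets
$$\Gamma(x,y,z)=f(z)\int_{-\infty}^x\int_{-\infty}^y\int_{-\infty}^\infty\psi(r,s,t)\,dt\,ds\,dr$$
for $f\in C^\infty_c(\mathbb{R})$ and $\Xi(x,y,z)=\int_{-\infty}^z(\psi-\partial_x\Gamma-\partial_y\Gamma)(x,y,t)\,dt$, asserting that both are Schwartz ``by our assumption on the integral of $\psi$,'' and then takes $\Psi=(\Gamma,\Gamma,\Xi)$. Your inductive argument makes precise what is actually needed for such iterated antiderivatives to decay, and in doing so it exposes a gap in the paper's assertion: the quantity $\int_{-\infty}^x\int_{-\infty}^y\int_{\mathbb{R}}\psi$ tends to zero only when $x$ and $y$ go to $+\infty$ together; as $x\to+\infty$ with $y$ fixed it converges to the function $y\mapsto\int_{\mathbb{R}}\int_{-\infty}^y\int_{\mathbb{R}}\psi$, which is generically nonzero (e.g.\ $\psi(x,y,z)=\phi_1(x)\phi_1(y)\phi_1(z)-\phi_2(x)\phi_2(y)\phi_2(z)$ with $\int\phi_i=1$, $\phi_1\ne\phi_2$), so $\Gamma$ need not be Schwartz. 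The precise point your proof gets right is subtracting the rank-one tensor $g(x')\rho(x_n)$, with $\rho$ normalized so that $\int\rho=1$, before antidifferentiating in $x_n$, which forces $\int_{\mathbb{R}}h(x',t)\,dt$ to vanish for every $x'$ and therefore makes the one-variable antiderivative decay; differentiating that identity in $x'$ gives the needed uniformity in the transverse variables, and the leftover $(n-1)$-dimensional zero-mean piece $g$ is dispatched by induction. So your argument not only reaches the lemma but also repairs a genuine oversight in the paper's construction. Your side remark about the Fourier multiplier $-i\xi|\xi|^{-2}$ failing to be smooth at the origin is apt and correctly explains why a one-line Fourier proof is unavailable.
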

\begin{proof}
One direction is relatively straightforward. Indeed suppose that $\psi = \textrm{div } \Psi$ for some Schwartz vector field and let $B_R$ be some ball centered at the origin of radius $R$. Then the divergence theorem gets us:
$$\int_{B_R} \psi = \int_{B_R} \textrm{div } \Psi = \int_{\partial B_R} \Psi \cdot \bf{n}$$
Since $\Psi$ is Schwartz and decays faster than any polynomial at infinity, the right hand side above tends to zero as $R\to \infty$ and the result is shown.\\\\
Now assume that $\int_{\mathbb{R}^3} \psi =0$; we desire to construct $\Psi$. Let $f(z)$ be any real-valued function in $C^\infty_c(\mathbb{R})$. Let 
$$\Gamma(x,y,z) = f(z)\cdot \int_{-\infty}^x \int_{-\infty}^y \int_{-\infty}^\infty \psi(r,s,t)dtdsdr$$
By our assumption on the integral of $\psi$, the function $\Gamma$ is a Schwartz function.
Now, 
$$\partial_x \Gamma = f(z)\int_{-\infty}^y \int_{-\infty}^\infty \psi(x,s,t)dtds$$
and
$$\partial_y \Gamma = f(z)\int_{-\infty}^x \int_{-\infty}^\infty \psi(r,y,t)dtdr.$$
So defining (a function that is also a Schwartz function by our assumption on $\psi$):
$$\Xi(x,y,z) = \int_{-\infty}^z (\psi - \partial_x \Gamma -\partial_y \Gamma)(x,y,t)dt$$
we observe that
$$\psi = \partial_x \Gamma + \partial_y \Gamma +\partial_z \Xi$$
so our desired vector field $\Psi = (\Gamma, \Gamma, \Xi)$.
\end{proof}

\subsection{The Continuity of the Energy Integral}
\begin{lemma}\label{Integral1}
Let $u$ be any smooth solution of Equation (\ref{PEQN}) with Schwartz initial value $u_0$, let $Q$ be any cube at level $j$, and let $t_0<T$ be any time before blowup. Then the function of time $u_Q(t)$ is continuous on $[0,T)$ and the following equality holds:
$$\lim_{\epsilon\to 0} \int_{t_0}^{T-\epsilon} \frac{d}{dt} u_{Q}^2 dt = \int_{t_0}^T \frac{d}{dt}u_{Q}^2 dt $$
\end{lemma}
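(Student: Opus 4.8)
The plan is to reduce everything to showing that $g:=\frac{d}{dt}u_Q^2$ lies in $L^1([t_0,T))$. Granting this, the claimed equality is immediate: the functions $g\,\chi_{[t_0,T-\epsilon]}$ converge pointwise to $g\,\chi_{[t_0,T)}$ as $\epsilon\to0$ and are dominated by $|g|\in L^1$, so the dominated convergence theorem (equivalently, absolute continuity of the Lebesgue integral) gives $\int_{t_0}^{T-\epsilon}g\,dt\to\int_{t_0}^{T}g\,dt$. Note that $g$ is well defined and continuous on $[t_0,T)$: since $u$ is smooth on $\mathbb{R}^3\times[0,T)$ and solves the equation, $t\mapsto\phi_{Q,j}P_ju(t)$ is $C^1$ into $L^2$, so $u_Q^2$ is $C^1$ on $[t_0,T)$, and by Equation (\ref{COEFFODE})
$$g(t)=2\langle-\hat{B}(u,u),P_j\phi_{Q,j}^2P_ju\rangle-2\langle(-\Delta)^\alpha u,P_j\phi_{Q,j}^2P_ju\rangle .$$
Thus it suffices to bound the integral over $[t_0,T)$ of the absolute value of each of the two terms on the right.

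For the dissipation term I would rerun the computation from the proof of Proposition \ref{dissipationestimate}, which in fact gives, for every $t<T$, the two-sided bound
$$\abs{\langle(-\Delta)^\alpha u,P_j\phi_{Q,j}^2P_ju\rangle}\leq K2^{2\alpha j}u_Q^2+K2^{(2\alpha-\epsilon)j}\sum_{Q'\in\mathcal{N}^1(Q)}u_{Q'}^2+K2^{-150j} .$$
This comes from splitting off the commutator $[(-\Delta)^\alpha,\phi_{Q,j}P_j]\in OPS^{2\alpha-\epsilon}_{1,1-\epsilon}$, inserting $\tilde{P}_j$ by Lemma \ref{commutator1}, and using the frequency localization. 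By the energy dissipation law (\ref{Conserv}) we have $u_Q,u_{Q'}\leq\|u(t)\|_2\leq\|u_0\|_2$ for every $t<T$, so this term is bounded on $[t_0,T)$ by a constant depending only on $j,\epsilon,\|u_0\|_2$, and hence is integrable over the bounded interval $[t_0,T)$. The mechanism is that the $P_j$ in front of $u$ band-limits the argument to $\abs{\xi}\sim2^j$, so the unbounded symbol $\abs{\xi}^{2\alpha}$ only ever contributes the harmless constant factor $2^{2\alpha j}$, even though higher Sobolev norms of $u$ may blow up as $t\to T$.

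For the nonlinear term I would invoke the decomposition $P_j\hat{B}(u,u)=H_{j,lh}+H_{j,hl}+H_{j,hh}+H_{j,loc}$ and the pointwise-in-time estimates of Lemmas \ref{lowhigh}, \ref{highlow}, \ref{highhigh}, and \ref{local}, which bound $\abs{\langle-\hat{B}(u,u),P_j\phi_{Q,j}^2P_ju\rangle}$ by a finite sum of terms built from the $L^2$-level quantities $u_Q$, $u_{Q_k}$, $u_{\mathcal{N}^{1000/\epsilon}(Q)}$, and $\|\phi_{Q_j,j}P_ku\|_2$, weighted by fixed powers of $2^j$ and by powers of $2^k$. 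Each such term has finite time integral over $[0,T)$, and hence over $[t_0,T)$, once one combines three facts: that $u_Q,u_{Q_k},\|\phi_{Q_j,j}P_ku\|_2\leq\|u_0\|_2$ by (\ref{Conserv}); that $\|\phi_{Q_k,k}P_ku\|_2\leq\|P_ku\|_2\leq K2^{-\alpha k}\|(-\Delta)^{\alpha/2}u\|_2$ by the frequency localization of $P_k$ (cf. Lemma \ref{finiteband1}); and that $\int_0^T\|(-\Delta)^{\alpha/2}u\|_2^2\,dt\leq\|u_0\|_2^2$ by (\ref{Conserv}) again. Concretely, the Cauchy--Schwarz inequality in time handles $\int_{t_0}^Tu_{\mathcal{N}^1(Q)}u_Q\,dt$ and each of the $O(j)$ terms in the low- and high-level sums; the high-high sum converges because $\tfrac{101}{100}-2\alpha<0$ when $\alpha>\tfrac34$, so that $\sum_{k>j}2^{101k/100}\int_0^T\|\phi_{Q_j,j}P_ku\|_2^2\,dt\leq K2^{(\frac{101}{100}-2\alpha)j}\int_0^T\|(-\Delta)^{\alpha/2}u\|_2^2\,dt<\infty$; the local term involves only finitely many cubes at levels within $O(1/\epsilon)$ of $j$; and the $K2^{-150j}$ term is trivially integrable. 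Hence the nonlinear term lies in $L^1([t_0,T))$, and together with the dissipation bound this gives $g\in L^1([t_0,T))$, proving the lemma.

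The step I expect to be the main obstacle is precisely this uniform-in-time control of the right-hand side of (\ref{COEFFODE}) up to the blowup time $T$: one cannot simply invoke smoothness of $u$ on $[0,T)$, since higher Sobolev norms of $u$ may diverge as $t\to T$, so a priori both pairings could fail to be integrable near $T$. The resolution is structural: the localization $\phi_{Q,j}P_j$ tames the dissipation term, while the paradifferential splitting of Section 3 rewrites the nonlinearity purely through $L^2$-level wavelet data whose relevant powers of $2^k$ are summable precisely because $\alpha>3/4$, and the energy dissipation law keeps all of this finite on $[0,T)$.
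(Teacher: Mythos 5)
Your proof is correct, and at the top level it follows the same route as the paper: reduce the claim to showing that $\frac{d}{dt}u_Q^2\in L^1([t_0,T))$ and then conclude by dominated convergence. Where you genuinely diverge is in the justification of the $L^1$ bound. The paper's own proof introduces $F(t):=\langle\partial_t u,u\rangle=-\|(-\Delta)^{\alpha/2}u\|_2^2$, notes $F\in L^1$ by (\ref{Conserv}), and then asserts the pointwise domination $\abs{\tfrac{d}{dt}u_Q^2}=\abs{2u_Q\tfrac{d}{dt}u_Q}\leq K\abs{F(t)}$ with only the phrase ``by the energy dissipation law.'' That inequality is true (up to a harmless additive constant coming from the $2^{-150j}$ error terms), but the paper does not actually derive it: the chain-rule rewriting and the bound $u_Q\leq\|u_0\|_2$ by themselves do not control $\tfrac{d}{dt}u_Q$ by $\|(-\Delta)^{\alpha/2}u\|_2^2$, since $\tfrac{d}{dt}u_Q^2=2\langle\partial_t u, P_j\phi_{Q,j}^2P_ju\rangle$ is not a priori comparable to $\langle\partial_t u,u\rangle$. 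You instead go back to (\ref{COEFFODE}) and verify integrability of each term using the two-sided reading of Proposition \ref{dissipationestimate} for the dissipative piece and Lemmas \ref{lowhigh}, \ref{highlow}, \ref{highhigh}, \ref{local} plus (\ref{Conserv}) and Lemma \ref{finiteband1} for the nonlinear piece. This is more explicit and, I think, more persuasive than the paper's one-line domination: it makes transparent exactly where the hypothesis $\alpha>3/4$ (needed for convergence of the high-high sum) and the boundedness of $T$ enter, and it answers the question you correctly flag as the real content of the lemma, namely why $\tfrac{d}{dt}u_Q^2$ stays integrable even though higher Sobolev norms of $u$ may blow up as $t\to T$. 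Both proofs buy the same conclusion; yours trades brevity for a self-contained, fully justified estimate, and in doing so fills in detail that the paper elides.
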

\begin{proof}
The vector field $u(t)$ is a smooth (mild) solution of Equation (\ref{PEQN}),
$$\partial_t u +(-\Delta)^\alpha u +\hat{B}(u,u) =0,$$
with Schwartz initial data $u_0$. In particular, its $L^2$ norm is strongly continuous in time, so it remains to show that the localized norm $u_Q$ remains continuous in time. For this we proceed as in the proof of Lemma 2.7 in \cite{O}.

We observe that
$$\| \phi_{Q,j}P_ju(x,t) - \phi_{Q,j}P_ju(x,s) \|_{L^2(\mathbb{R}^n)}^2 \leq \int_{2Q} \abs{\int_{\mathbb{R}^n} p_j(x-\xi)(\mathcal{F}(u)(\xi,t)-\mathcal{F}(u)(\xi,s))d\xi}^2dx.$$
The integral inside the absolute value on the right-hand side above converges to zero because $u$ is a smooth mild solution whose $L^2$ norm is strongly continuous in time and because the Fourier transform is a unitary operator on $L^2$. Moreover, since $\|u\|_{L^2}\leq K$, the integral is dominated by:
$$\abs{\int_{\mathbb{R}^n} p_j(x-\xi)(\mathcal{F}(u)(\xi,t)-\mathcal{F}(u)(\xi,s))d\xi}\leq K \tilde{p}_j(x)$$
Thus, by the dominated convergence theorem, we conclude that the localization $u_Q$ remains strongly continuous in time. The desired limit then follows in a straightforward manner.
\end{proof}
As a corollary, we have:
\begin{corollary}\label{Integral2}
Let $t_0<T$ be any time. Then 
$$ \int_{t_0}^{T} \frac{d}{dt} u_Q^2 dt = \lim_{t\to T} u_Q^2(t) -u_Q^2(t_0).$$
\end{corollary}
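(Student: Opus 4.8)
The plan is to combine Lemma \ref{Integral1} with the fundamental theorem of calculus applied on the truncated intervals $[t_0, T-\epsilon]$, on which the velocity field — and hence $t\mapsto u_Q^2(t)$ — is smooth.

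First I would note that since $u$ is smooth on $\mathbb{R}^3\times[0,T)$, for every $\epsilon>0$ small enough that $t_0<T-\epsilon$ the map $t\mapsto u_Q^2(t)$ is $C^1$ on $[t_0,T-\epsilon]$, so the fundamental theorem of calculus gives
$$\int_{t_0}^{T-\epsilon}\frac{d}{dt}u_Q^2\,dt = u_Q^2(T-\epsilon)-u_Q^2(t_0).$$
Then I would let $\epsilon\to 0^+$. By Lemma \ref{Integral1} the left-hand side converges to $\int_{t_0}^{T}\frac{d}{dt}u_Q^2\,dt$, and this quantity is a finite real number, since the proof of Lemma \ref{Integral1} exhibits $\tfrac{d}{dt}u_Q^2$ as dominated by $K|F|$ with $F\in L^1([t_0,T])$, hence $\tfrac{d}{dt}u_Q^2\in L^1([t_0,T])$. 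Because $u_Q^2(t_0)$ is a fixed finite constant, it follows that $\lim_{\epsilon\to0^+}u_Q^2(T-\epsilon)$ exists and is finite; after the change of variable $t=T-\epsilon$, this limit is precisely $\lim_{t\to T^-}u_Q^2(t)$. Since a limit, when it exists, agrees with the corresponding $\limsup$, we get $\lim_{\epsilon\to0^+}u_Q^2(T-\epsilon)=\limsup_{t\to T}u_Q^2(t)$, and passing to the limit in the displayed identity yields the claim.

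The argument involves no serious obstacle; the only point requiring care is the logical ordering. One must first deduce the \emph{existence} of $\lim_{t\to T^-}u_Q^2(t)$ from the convergence of the integrals supplied by Lemma \ref{Integral1}, and only afterwards identify this limit with $\limsup_{t\to T}u_Q^2(t)$ — it would be circular to assume the $\limsup$ is a genuine limit at the outset. It is also worth remarking that the same reasoning shows $\limsup_{t\to T}u_Q^2(t)$ is automatically finite, which is what makes the right-hand side of the asserted identity well-defined in the first place.
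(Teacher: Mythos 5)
Your proof is correct and takes essentially the same approach as the paper: apply the fundamental theorem of calculus on the truncated intervals $[t_0,T-\epsilon]$ and then use Lemma \ref{Integral1} to pass to the limit. The paper simply takes the $\limsup$ of both sides of the truncated identity, while you additionally note that the genuine limit of $u_Q^2(T-\epsilon)$ exists; both observations lead to the same conclusion.
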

\begin{proof}
Indeed, we may write
$$ \int_{t_0}^{T-\epsilon} \frac{d}{dt} u_Q^2 dt = u_Q^2(T-\epsilon) -u_Q^2(t_0)$$
and take the limit of the above expression as $\epsilon\to 0$, using Lemma \ref{Integral1}.
\end{proof}

\subsection{The Critical Exponent in Higher Dimensions}
We determine the critical exponent of hyperdissipation that guarantees global regularity for solutions of Equation (\ref{FRACNS}). A proof along the same lines for the hyperdissipative three-dimensional Navier-Stokes equations may be found in \cite{KP}.

\begin{proposition}
Let $\alpha>(n+2)/4$ and let $u(t):\mathbb{R}^n\times [0,\epsilon)\to \mathbb{R}^n$ be a smooth solution of Equation (\ref{FRACNS}) from Schwartz initial data. Then, $u(t)$ is smooth for all times $t\geq 0$. 
\end{proposition}
\begin{proof}
We let $\beta>0$ and test Equation (\ref{FRACNS}) against $(-\Delta)^\beta u$ to get:
$$\langle \partial_t u , (-\Delta)^\beta u \rangle +\langle (-\Delta)^\alpha u, (-\Delta)^\beta u\rangle +\langle (u\cdot\nabla)u, (-\Delta)^\beta u\rangle =0.$$
Using that $\textrm{div }u=0$, we may rewrite the equation as:
\begin{equation}\label{UH1} \frac{d}{dt}\big(\frac{1}{2}\|u\|_{\beta}^2\big)+\|u\|_{\alpha+\beta}^2 +\langle (-\Delta)^{\frac{\beta}{2}}\textrm{div}(u\otimes u), (-\Delta)^{\frac{\beta}{2}} u\rangle =0.\end{equation}
We estimate the trilinear term by:
\begin{equation}\label{UH2}\abs{\langle (-\Delta)^{\frac{\beta}{2}}\textrm{div}(u\otimes u), (-\Delta)^{\frac{\beta}{2}} u\rangle}\leq \| u\|_{W^{1,p}}\|u\|_{W^{\beta,q}}\|u\|_\beta\end{equation}
where $1/p+1/q=1/2$. Now, $\alpha=(n+2)/4$ is the minimum value of $\alpha$ so that both inequalities 
$$\frac{1}{2}-\frac{\alpha}{n}\leq \frac{1}{p}-\frac{1}{n}, \quad \textrm{and} \quad \frac{1}{2}-\frac{\alpha}{n}\leq \frac{1}{q}$$
hold for some choice $(p,q)$ with $1/p+1/q=1/2$ depending on $n\geq 3$. Since these inequalities hold, we may use the Sobolev Embedding Theorem and then Young's inequality to conclude that:
\begin{equation}\label{UH3} \| u\|_{W^{1,p}}\|u\|_{W^{\beta,q}}\|u\|_\beta\leq \|u\|_\alpha \|u\|_{\alpha+\beta}\|u\|_\beta\leq \frac{1}{2} \|u\|_{\alpha+\beta}^2 +\frac{1}{2}\|u\|_\alpha^2\|u\|_\beta^2.\end{equation}
Using Equations (\ref{UH1}), (\ref{UH2}), and (\ref{UH3}), we get
$$\frac{d}{dt}\big(\frac{1}{2}\|u\|_{\beta}^2\big) \leq \frac{1}{2}\|u\|_\alpha^2\|u\|_\beta^2$$
It follows that
$$\|u(t)\|_\beta^2 \leq \|u(0)\|_\beta^2 \exp\big(\int_0^t \|u(s)\|_\alpha^2ds\big).$$
However, by the energy dissipation law, we know
$$\int_0^t \|u(s)\|_\alpha^2 ds\leq \|u(0)\|_{L^2}$$
for all times $t$ whenever $u(t)$ is smooth. Since $\beta$ was arbitrary, $u_0$ was Schwartz, and $u(t)$ was smooth for small times $t\in [0,\epsilon)$, we get regularity for all times $t\geq 0$.
\end{proof}

\end{document}